\newtcolorbox{tbox}[1][]{%
    breakable,
    enhanced,
    colframe=blue,
    coltitle=white,
    #1
}
\numberwithin{equation}{section}
\theoremstyle{plain}
\newtheorem{lemma}{Lemma}[section]
\newtheorem{theorem}[lemma]{Theorem}
\newtheorem{proposition}[lemma]{Proposition}
\newtheorem{corollary}[lemma]{Corollary}
\newtheorem{definition}[lemma]{Definition}
\newtheorem{remark}[lemma]{Remark}
\newtheorem{notation}[lemma]{Notation}
\DeclareMathOperator{\vdim}{vdim}
\DeclareMathOperator{\edim}{edim}
\newcommand{\pp}{\mathbb{P}}
\newcommand{\ls}{\mathcal{L}}
\newcommand{\p}{\mathbb{P}}
\newcommand{\Sec}{\mathbb{S}ec}
\newcommand{\Span}[1]{\langle#1\rangle}
\newcommand{\LL}{{\mathcal L}}
\newcommand{\PP}{{\mathbb P}}
\newcommand{\F}{{\mathbb F}}
\newcommand{\C}{\mathbb{C}}
\newcommand{\cdim}{\operatorname{codim}}
\newcommand{\Pic}{\operatorname{Pic}}
\newcommand{\Sym}{\operatorname{Sym}}
\renewcommand{\H}{\operatorname{H}}
\newcommand{\h}{\operatorname{h}}
\newcommand{\elis}[1]{\todo[inline,color=green!40]{#1}}
\begin{document}

\title{Waring Identifiability for powers of forms via degenerations}

\author{Alex Casarotti}
\address{Dipartimento di Matematica\newline Universit\`a degli Studi di Trento \newline via Sommarive 14
I-38123 \newline Povo di Trento (TN), Italy}
\email{alex.casarotti@unitn.it}

\author{Elisa Postinghel}
\address{Dipartimento di Matematica\newline Universit\`a degli Studi di Trento \newline via Sommarive 14
I-38123 \newline Povo di Trento (TN), Italy}
\email{elisa.postinghel@unitn.it}

\subjclass[2020]{Primary: 14N07.  Secondary: 14C20, 14D06, 14M25.}
\keywords{Identifiability, Waring problems, secant varieties, linear systems, degenerations} 

\thanks{
Both authors are members of INdAM-GNSAGA}

\maketitle 

\begin{abstract}


We discuss an approach to the secant non-defectivity of the varieties parametrizing $k$-th powers of forms of degree $d$. It employs a Terracini type argument along with certain degeneration arguments, some of which are based on toric geometry. This implies a result on the identifiability of the Waring decompositions of general forms of degree kd as a sum of $k$-th powers of degree $d$ forms, for which an upper bound on the Waring rank was proposed by Fr\"oberg, Ottaviani and Shapiro.
 \end{abstract}

\section{Introduction}

Identifiability problems arise naturally in many fields of both applied and classical algebraic geometry. A variety $X \subset \p^N$ is said to be $h-$\textit{identifiable} if the general point of its $h-$secant variety has a unique decomposition as a sum of $h$ points of $X$.
A classical application of identifiability concerns particular polynomial decompositions.
The Waring problem for forms asks for a unique decomposition of a homogeneous polynomial $F_d \in \C[x_0,\dots,x_n]_d$ as a sum of $d-$th powers of linear forms, i.e. 
\begin{equation}\label{classical-Waring}
F_d=L_1^d+\dots+L_h^d,
\end{equation} 
with $L_i \in \C[x_0,\dots,x_n]_1$.
A necessary condition for identifiability is secant non-defectivity: a variety $X \subset \p^N$ of dimension $\dim(X)=n$ is said to be not $h-$(secant) defective if the $h-$secant variety $\Sec_h(X)$, defined as the Zariski closure of points in $\p^N$ lying in the span of $h$ points of $X$, has the expected dimension $\min\{N,h(n+1)-1\}$.
In \cite{COV} the authors proved that, for the Veronese case $X=\nu_d(\p^n)$ and for all subgeneric ranks $h$ (i.e. such that $\Sec_h(X)\subseteq\
\PP^N$ does not fill up the space), a general form $F$ of rank $h$ is identifiable, with a few well known exceptions.
In the case of generic rank, the situation is almost the opposite: in \cite{GM} it is proved that all forms of generic rank are not identifiable with the following exceptions: $(n,d,h)=(1,2k-1,k),(3,3,5),(2,5,7).$

In \cite{FOS} the authors initiated the investigation of a generalization of the classical Waring problems for forms. In particular they showed that a general form $F_{kd} \in \C[x_0,\dots,x_n]_{dk}$ can be written as a sum of at most $k^n$ $k-$th powers of forms $G_i$'s of degree $d$
\begin{equation}\label{FOS-Waring}
F_{kd}=G_1^k+\dots+G_{h}^k,
\end{equation} 
and  that this bound is sharp, i.e. when $d$ is sufficiently large, $k^n$ computes the generic rank. The secant defectivity of the varieties parametrizing $k-$th powers of forms of degree $d$ remains an open problem in general.

In this paper we address both the secant defectivity and the identifiability problems for such Waring decompositions.
Denote with $V_{n,d}^k$ the variety parametrizing $k-$th powers of homogeneous degree $d$ forms in $n+1$ variables: 
 $$V_{n,d}^k:=\{[F^k]|F \in \C[x_0,\dots,x_n]_d\}.$$

 Our first main result about secant non-defectivity is the following.

\begin{theorem}\label{main-thm1}
 The variety $V_{n,d}^k$ is not $h$-defective if  $k\ge 3$ and  $h \leq \frac{1}{N+1} {N+k-3 \choose N}$, where $N={n+d \choose d}-1$.
\end{theorem}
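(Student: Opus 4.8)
The plan is to translate the non-defectivity of $V_{n,d}^k$ into a rank computation by Terracini's lemma, and then to settle that rank through a degeneration. First I would compute the affine tangent space at a general point $[F^k]$: differentiating $F\mapsto F^k$ gives $F^{k-1}\cdot S_d$, where $S_m:=\C[x_0,\dots,x_n]_m$; since $[F]\mapsto[F^k]$ is finite we have $\dim V_{n,d}^k=N$ and $\dim(F^{k-1}S_d)=N+1$. By Terracini's lemma the tangent space to $\Sec_h(V_{n,d}^k)$ at a general point is $\sum_{i=1}^h F_i^{k-1}S_d\subseteq S_{kd}$ for general $F_i$, so $V_{n,d}^k$ is not $h$-defective exactly when the multiplication map
\[
\Phi\colon\bigoplus_{i=1}^h S_d\longrightarrow S_{kd},\qquad(G_1,\dots,G_h)\longmapsto\sum_{i=1}^h F_i^{k-1}G_i,
\]
has maximal rank $\min\{h(N+1),\,\dim S_{kd}\}$ for general $F_i$, where $\dim S_{kd}=\binom{n+kd}{kd}$.

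Because the rank of $\Phi$ is lower semicontinuous in $(F_1,\dots,F_h)$, the generic rank is the largest one, so it suffices to exhibit a single degenerate configuration for which $\Phi$ has maximal rank. The hypothesis $k\ge 3$ is what makes such a degeneration available: the factors $F_i^{k-1}$ then have degree $(k-1)d\ge 2d$, well above $d$, which removes the degree-$d$ Koszul syzygies that would otherwise obstruct injectivity of $\Phi$, and, in a toric limit, separates the monomial blocks attached to distinct $F_i$. Concretely I would set up a flat toric degeneration—either of the forms $F_i$ directly, or, via the $d$-uple Veronese model in which $V_{n,d}^k$ is realized as the linear projection $\pi\colon\nu_k(\PP^N)\rmap V_{n,d}^k$ from $\PP\big(I(\nu_d(\PP^n))_k\big)$ (here $\mu\colon\Sym^k S_d\to S_{kd}$ is multiplication and $\ker\mu=I(\nu_d(\PP^n))_k$), of the ambient $\PP^n$ itself—so that in the special fibre the spaces $F_i^{k-1}S_d$ become coordinate (monomial) subspaces. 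For such a limit the rank of $\Phi$ becomes purely combinatorial: it equals the number of lattice points of $\Delta_{kd}$ covered by the translated regions attached to the degenerated tangent spaces, and maximal rank amounts to a disjointness (respectively covering) statement for these regions in the weight lattice.

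The heart of the argument, and the step I expect to be hardest, is therefore the combinatorial one: arranging the degeneration so that the lattice regions pack (respectively cover) efficiently, and counting them so as to reach exactly $h=\big\lfloor\frac{1}{N+1}\binom{N+k-3}{N}\big\rfloor$. The constant $\binom{N+k-3}{N}$ is the number of degree-$(k-3)$ monomials in the $N+1$ Veronese coordinates, and extracting precisely this value—rather than a weaker estimate coming from a naive volume bound—is where the toric input is essential; it is the slack produced by the $(k-1)$-fold thickening available once $k\ge 3$. I would close by verifying that the chosen degeneration is flat, so that maximality of the rank in the special fibre forces maximality for general $F_i$ by semicontinuity, and hence non-defectivity of $V_{n,d}^k$ throughout the stated range.
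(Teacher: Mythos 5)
Your first step---Terracini's Lemma plus the identification of the affine tangent space at $[F^k]$ with $F^{k-1}S_d$, so that non-defectivity becomes a maximal-rank statement for the multiplication map $\Phi$---is correct, and it is essentially equivalent to the paper's Proposition \ref{identifiability tool proposition}, only phrased inside $S_{kd}$ rather than as the statement that the linear system $\ls_{N,k}(V_{n,d},2^h)$ of degree-$k$ hypersurfaces of $\PP^N$ containing the Veronese and double at $h$ general points has the expected dimension (the two formulations are exchanged by apolarity and the projection from $\mathbb{E}=\PP(\H^0(\mathcal{I}_{V_{n,d}}(k)))$).

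The gap is in the second half: the ``combinatorial heart'' you defer is the entire content of the theorem, and the single monomial/toric degeneration you propose is not capable of reaching the stated bound. If you specialize the $F_i$ to monomials $x^{a_i}$, the spaces $F_i^{k-1}S_d$ become translated copies of $d\Delta_n\cap\zz^n$ inside $kd\Delta_n\cap\zz^n$, and the best a packing or covering count in the weight lattice can give is governed by ${n+kd\choose n}$ and $N+1={n+d\choose n}$; this is precisely how the $k^n$ bound of Fr\"oberg--Ottaviani--Shapiro is obtained, and for $d\gg k$ it yields nothing in the range of the theorem. The constant $\frac{1}{N+1}{N+k-3\choose N}$ is of a different nature: it is the Alexander--Hirschowitz condition for $h$ double points in general position to impose independent conditions on degree-$(k-3)$ hypersurfaces of $\PP^N$, so it lives in the $N+1$ Veronese coordinates, not in $\zz^n$, and a specialization of the $F_i$ to monomials destroys exactly the generality of the $h$ points that is needed to invoke it. The paper instead uses a two-stage degeneration that preserves this generality: first the ambient $\PP^N$ is degenerated to a union of two components $\PP\cup\F$, the $h$ double points are sent onto $\PP$ where they coexist with a general linear subspace $\Lambda\cong\PP^n$ (the limit of the Veronese there) and are handled via Castelnuovo exact sequences and Alexander--Hirschowitz applied to $\ls_{N,k-2}(\Lambda,2^h)$ and its kernel $\ls_{N,k-3}(2^h)$---this is where both $k\ge3$ and the bound on $h$ enter---while the toric degeneration of $V_{n,d}$ to a union of $n$-planes is used only on the component $\F$, to compute the dimension of the system of degree-$k$ hypersurfaces containing the Veronese and a single fat point of multiplicity $k-1$ (resp.\ $k$) supported on it. To complete your argument you would either have to exhibit and verify a lattice packing achieving your bound (which I do not believe exists for $d\gg k$), or replace your degeneration by one that keeps the $h$ points general in $\PP^N$ so that Alexander--Hirschowitz becomes available.
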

Our second result is about identifiability. A bridge from non-defectivity to identifiability was built in \cite{CM} first and then generalised in the recent \cite{MM}: whenever $X$ is a sufficiently regular variety (with non-degenerate Gauss map), then if $X$ is not $h-$defective, then $X$ is $(h-1)-$identifiable. Using this and Theorem \ref{main-thm1} we obtain what follows.
\begin{theorem}\label{main-thm2}
A general form $F \in \C[x_0,\dots,x_n]_{dk}$ of rank $h$ with $k \geq 3$ is identifiable whenever $$h \leq \min\left\{\frac{1}{N+1} {N+k-3 \choose N}-1, \left\lfloor\frac{{{n+kd} \choose {n}}}{N+1} \right\rfloor-1\right\}$$
\end{theorem}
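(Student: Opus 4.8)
The plan is to deduce the statement from Theorem \ref{main-thm1} together with the general principle, established in \cite{CM} and extended in \cite{MM}, that non-defectivity forces identifiability once the regularity of the variety is under control. Concretely, the relevant bridge theorem asserts that an irreducible, non-degenerate variety $X$ whose Gauss map is non-degenerate and which is not $(h+1)$-defective is $h$-identifiable, in the sense that the general element of $\Sec_h(X)$ admits a unique expression as a sum of $h$ points of $X$. Applying this with $X = V_{n,d}^k \subset \PP(\C[x_0,\dots,x_n]_{dk})$ reduces everything to (i) checking the regularity hypotheses and (ii) bookkeeping on the admissible range of $h$.

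For step (i) I would first observe that $V_{n,d}^k$ is irreducible, being the image of $\PP^N=\PP(\C[x_0,\dots,x_n]_d)$ under the finite morphism $[F]\mapsto[F^k]$, and non-degenerate, since the $k$-th powers of degree-$d$ forms span $\C[x_0,\dots,x_n]_{dk}$ (a general form of degree $dk$ is a sum of such powers by \cite{FOS}). The crucial point is the non-degeneracy of the Gauss map. Here I would compute the affine tangent space at a general, hence smooth, point $[F^k]$, namely the image of the multiplication map $G\mapsto kF^{k-1}G$, which equals $F^{k-1}\cdot\C[x_0,\dots,x_n]_d$. If two general forms $F,F'$ produced the same tangent space, then for general $H\in\C[x_0,\dots,x_n]_d$ one would have $F'^{k-1}\mid F^{k-1}H$ with $\gcd(H,F')=1$, forcing $F'^{k-1}\mid F^{k-1}$; comparing degrees gives $F=\lambda F'$. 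Hence the Gauss map is generically injective, in particular non-degenerate.

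For step (ii) I apply Theorem \ref{main-thm1} with $h+1$ in place of $h$: the variety $V_{n,d}^k$ is not $(h+1)$-defective as soon as $h+1\le \frac{1}{N+1}\binom{N+k-3}{N}$, which rearranges to the first term of the minimum. The bridge theorem then yields $h$-identifiability. It remains to ensure that a \emph{general form of rank $h$} is genuinely a general element of $\Sec_h(V_{n,d}^k)$, i.e. that $h$ lies in the subgeneric range $\Sec_h(V_{n,d}^k)\subsetneq \PP(\C[x_0,\dots,x_n]_{dk})$. Using non-defectivity one has $\dim\Sec_h(V_{n,d}^k)=h(N+1)-1$, so the condition $h(N+1)<\binom{n+kd}{n}$ suffices, and this is guaranteed by $h\le \lfloor \binom{n+kd}{n}/(N+1)\rfloor-1$, the second term of the minimum. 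Taking the two constraints together gives the claimed bound.

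I expect the main obstacle to be step (i): verifying that the regularity hypotheses of the bridge theorem of \cite{MM} are actually met by $V_{n,d}^k$, since this variety is in general singular (the power map need not be an immersion), so one must argue at a general smooth point and check that the Gauss map does not drop dimension. The remaining numerical bookkeeping, although it must be carried out carefully to align the shift from $h$ to $h+1$ with the subgenericity floor, is routine once the bridge theorem applies.
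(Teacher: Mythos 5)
Your proposal takes essentially the same route as the paper: Theorem \ref{main-thm1} applied with $h+1$ in place of $h$, combined with the bridge theorem of \cite{MM} (Theorem \ref{Mass-Mella}); the paper's proof is a one-line citation of these two facts, and your verification that the Gauss map of $V_{n,d}^k$ is non-degenerate supplies a detail the paper leaves implicit. The only caveat is that the second term of the minimum is really there to guarantee the numerical hypothesis $(h+1)\dim V_{n,d}^k+h\le N_{dk}$ of Theorem \ref{Mass-Mella} (equivalently, generic finiteness of the $(h+1)$-secant map, cf.\ Remark \ref{generic}), which is slightly stronger than the subgenericity condition $h(N+1)<{{n+kd}\choose n}$ you invoke; since the bound $h\le\left\lfloor{{n+kd}\choose n}/(N+1)\right\rfloor-1$ is precisely equivalent to that hypothesis, your argument is nonetheless complete.
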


We remark that in \cite{Nen}, the author showed that the secant defectivity of $V^k_{n,d}$ can be bounded asymptotically, using a direct algebro-computational argument, to $k^n-d^n$. In Section \ref{section-bound} we show that, for $d \gg k$, our bound of Theorem \ref{main-thm1} extends the latter. 

In order to prove Theorem \ref{main-thm1}, we brought together a Terracini type argument and several different degeneration techniques.
By a classical application of Terracini's Lemma, non-defectivity problems for secant varieties translate into the study of particular linear systems of hypersurfaces of projective space with prescribed singularities. The first systematic study was used in the proof of the celebrated Alexander and Hirschowitz Theorem for the case of classical Waring problems \eqref{classical-Waring}, where secant varieties of Veronese embeddings of $\PP^n$ correspond to linear systems of hypersurfaces of $\PP^n$ with prescribed double points in general position.
In the setting of the generalized Waring problem, as in \eqref{FOS-Waring} for $k \geq 2$, a direct translation to linear systems of hypersurfaces with only double point singularities is not possible. In order to prove secant non-defectivity in this case it is necessary to impose a larger base locus to our linear systems. In particular, we will be interested in studying the dimensions of linear systems $\ls:=\mathcal{L}_{N,k}(V,2^h)$ of hyperurfaces of $\p^N$ of degree $k$ that are singular at $h$ general points and that contain the $d$-thVeronese embedding of $\PP^n$, $V\subset\PP^N$.
The study of such linear systems is carried out by combining two types of degenerations  introduced in \cite{Po} and in \cite{CDM} and \cite{Pos-SecDeg} respectively.
We start by degenerating the ambient space $\p^N$ to a scheme with two components and, in turn, the linear system $\ls$ to a fibered product of two linear systems, one on each component,  which are somewhat easier to deal with than the original one. In fact one of them consists of hypersurfaces containing a linear subspace and a collection of double points, the other one consists of hypersurfaces containing $V$ and just one fat point of relatively large multiplicity with support on $V$.
Now, in order to study the latter, we perform a toric degeneration of the Veronese $V$ to a union of $n$-dimensional linear spaces, which will have  the effect of reducing further the study of the limit linear system.

It is worth mentioning that the study of Waring type problems and identifiability of symmetric tensors has been implemented also in the applied fields, from chemistry, biology to algebraic statistics.
Recently in \cite{BCMO}, the problem of identifiability for $k-$th powers of forms was linked to the identifiability of centered Gaussian mixture models in applied statistics.

\subsection{Organization of the paper}
Section \ref{preliminaries} contains all definitions and our Terracini type result that translates non-defectivity of $V^k_{n,d}$ to the study of $\ls$, Proposition \ref{identifiability tool proposition}.
In Section \ref{section-degeneration} we explain in detail the degenerations techniques, both in the classical and in the toric setting.
 In Section \ref{section-linearsystems} we analyse two auxiliary linear systems arising from the degeneration of $\ls$, Proposition \ref{dim-doublepoints-on-Lambda} and Corollary \ref{corollary-toric}.
Section \ref{section-proof} is devoted to the proof of the main technical result, i.e. Theorem \ref{main-thm-linearsystems}.
Finally, in Section \ref{section-bound} we explain to what extent our bounds are asymptotically better then the ones known before in the literature.

\subsection{Acknowledgments}
The authors would like to thank Giorgio Ottaviani and Alessandro Oneto for several useful discussions during the preparation of this article.
We also thank the referee for their useful comments.

\section{Powers of forms}\label{preliminaries}
In order to give a coherent and self-contained treatment of the subject, in this section we recall some preliminary definitions and results.

We will  work over the field of complex numbers $\C$.

\subsection{Veronese embeddings}\label{preliminaries-Veronese}
Let $W:=\mathbb{C}^{n+1}$ and $W^\ast$ the dual vector space.
With $\p^n=\p(W)$ we denote the projective space over $\C$ of dimension $n$.
We introduce the following integers 
$$
N_d:={n+d \choose n}-1, \quad
N_d^{k}:={N_d+k \choose N_d}-1.
$$
 We will indicate $N_d$ simply by $N$, when no confusion may arise.

Notice that the following identities hold: 
\begin{align*}
\h^0(\p^n,\mathcal{O}_{\p^n}(d))&=N_d+1\\
\h^0(\p^{N_d},\mathcal{O}_{\p^{N_d}}(k))&=N_d^k+1
\end{align*}
where $\h^0(\p^a,\mathcal{O}_{\p^a}(b))$ denotes the number of global sections of the twisting sheaf $\mathcal{O}_{\p^a}(b)$ on $\PP^a$, for any integers $a\ge 1, b\ge0$. In other terms, ${{a+b}\choose b}$ is the dimension of the linear systems of hypersurfaces of degree $b$ of $\PP^a$ which, in turn, is the projectivization of the complex vector spaces of forms of degree $b$ in $a+1$ variables. 
%
With this in mind, we can make the following  identifications: $$\p^{N_d}=\p(\Sym^d(W^*)),\quad \p^{N_d^{k}}=\p(\Sym^k(\Sym^d(W^*))).$$

Now we consider the  following  \emph{Veronese embeddings}:
\begin{align*}
 \nu_d:& \ \p^n \longrightarrow V_{n}^d \subset \p(\Sym^d(W^*)) \\
 & \ [L] \longmapsto [L^d]\\
\intertext{and} 
 \nu_k:&\ \p^{N_d} \longrightarrow V_{N_d}^k \subset \p(\Sym^k(\Sym^d(W^*))) \\
& \ [F] \longmapsto  [F^k]
\end{align*}
where $L\in\C[x_0,\dots,x_n]_1$ is a linear form and $F\in\C[x_0,\dots,x_n]_d$  is a form of degree $d$.
The image of the embeddings are called \emph{Veronese varieties}. 

\begin{remark}

Note that both $\nu_d$ and $\nu_k$ are the maps corresponding to the complete linear systems associated with the line bundles $\mathcal{O}_{\p^n}(d)$ and $\mathcal{O}_{\p^{N_d}}(k)$ respectively. As elements of $ \p(\Sym^d(W^\ast))$ (respectively $\p(\Sym^k(\Sym^d(W^\ast)))$) the image of $\nu_d(p)$ (respectively $\nu_k(p)$), with $p$ a point, corresponds to the hyperplane parametrizing hypersurfaces of degree $d$ in $\p^n$ (respectively of degree $k$ in $\p^{N_d}$) passing through $p$.
\end{remark}

We want to parametrize forms in $\p^n$ of degree $dk$, i.e. elements in $\C[x_0,\dots,x_n]_{dk}$, that can be written as $k-$th powers of forms of degree $d$. 

\begin{remark}
Note that the Veronese varieties $\nu_{dk}(\p^n)$ are always contained in the set of all $k-$th powers of forms of degree $d$  because, trivially, $L^{dk}=(L^d)^k$.
\end{remark}

Now, we let
\begin{align*}
\phi_{dk}: & \ \p^{N_d} \longrightarrow \p^{N_{dk}}=\p(\Sym^{dk}(W^\ast))\\
& \  \ [F] \longmapsto [F^k]
\end{align*}
be the map that assigns to each form $F \in \C[x_0,\dots,x_n]_d$ its $k-$th power.

\begin{definition}\label{(d,k)-Veronese}
We call the scheme theoretic image $$V_{n,d}^k=\phi_{dk}({\p^{N_d}}) \subseteq \p^{N_{dk}}$$ the \textit{$(d,k)-$\emph{Veronese variety}}.
\end{definition}

Under the previous identification, the classical Veronese varieties correspond to the $(d,1)-$Veronese varieties.
On the other hand, for $k>1$, $V_{n,d}^k$ is not a standard Veronese variety, indeed it is easy to see that the target of $\phi_{dk}$ has dimension ${n+dk \choose dk}$, which is never equal to ${N_d+a \choose a}$ for any $a$. A priori we don't know if the map $\phi_{dk}$ is an isomorphism, as it happens for classical Veronese varieties, see Lemma \ref{immersion} below.

\subsection{Secant varieties and identifiability}
In this subsection we recall the definition of secant variety and the notion of identifiability, following  \cite{CM}.

Let $X\subset\p^N$ be a non degenerate reduced 
variety. Let $X^{(h)}$ be the $h$-th symmetric product of $X$,
that is the variety parameterizing unordered sets of $h$ points of $X$. 
Let $U_h^X\subset X^{(h)}$ be the smooth locus, given by sets of $h$
distinct smooth points.
\begin{definition}
  A point $z\in U^X_h$ represents a
set of $h$ distinct points, say $\{z_1,\ldots, z_h\}$. We say that a point $p\in \p^N$ is in the span of
 $z$, $p\in\langle z\rangle$,  if it is a linear combination of the
 $z_i$'s.
\end{definition}
With this in mind, one can define  the following object.


\begin{definition}\label{abstract} The \emph{abstract} $h$-\emph{secant variety} is the $(hn+h-1)$-dimensional variety
$$\textit{sec}_{h}(X):=\overline{\{(z,p)\in U_h^X\times\p^N| p\in \Span{z}\}}\subset
X^{(h)}\times\p^N.$$ 

Let $\pi:X^{(h)}\times\p^N\to\p^N$ be the projection onto the second factor. 
The $h$-\emph{secant variety} is
$$\Sec_{h}(X):=\pi(sec_{h}(X))\subset\p^N,$$
and $\pi_h^X:=\pi_{|sec_{h}(X)}:sec_{h}(X)\to\p^N$ is the $h$-\emph{secant map} of $X$.

If the variety $X$ is irreducible and reduced we  say that $X$ is
\textit{$h$-defective}
if $$\dim\Sec_{h}(X)<\min\{\dim\textit{sec}_{h}(X),N\}.$$
\end{definition}

The following is a classical result.

\begin{theorem}[Terracini's Lemma]
  \label{th:Terracini} Let $X\subset\p^N$ be an irreducible
  variety. Then the follwing holds.
  \begin{itemize}
 \item For any $p_1,\ldots,p_k\in X$ and $z\in\Span{p_1,\ldots,p_k}$, we have
$$\Span{T_{p_1}X,\ldots,T_{p_k}X}\subseteq T_z\Sec_k(X). $$
 \item There is a dense open set $U\subset X^{(k)}$ such that
$$\Span{T_{p_1}X,\ldots,T_{p_k}X}=T_z\Sec_k(X), $$
for any general point $z\in \Span{p_1,\ldots,p_k}$ with $(p_1,\ldots,p_k)\in U$.
  \end{itemize}
\end{theorem}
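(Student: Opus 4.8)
The plan is to reduce the whole statement to a single differential computation performed in the affine cone. Write $\hat X\subset\C^{N+1}$ for the affine cone over $X$; for a smooth point $p\in X$ with representative $\hat p\in\hat X$ let $\hat T_{\hat p}\hat X\subset\C^{N+1}$ be the affine tangent space, so that $\hat p\in\hat T_{\hat p}\hat X$, one has $\hat T_{\hat p}\hat X=\hat T_{c\hat p}\hat X$ for every $c\in\C^\ast$, and the projectivization of $\hat T_{\hat p}\hat X$ is the embedded tangent space $T_pX$. First I would consider the addition map
$$\Phi\colon \hat X\times\cdots\times\hat X\longrightarrow\C^{N+1},\qquad (\hat p_1,\dots,\hat p_k)\longmapsto\hat p_1+\cdots+\hat p_k,$$
whose image is dense in the affine cone $\widehat{\Sec_k(X)}$. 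At a point where every $\hat p_i$ is smooth the differential $d\Phi$ is simply $(v_1,\dots,v_k)\mapsto v_1+\cdots+v_k$, so its image is the linear span $\hat T_{\hat p_1}\hat X+\cdots+\hat T_{\hat p_k}\hat X$. Since each summand contains its own base point, this span already contains $\hat z=\sum_i\hat p_i$, and upon projectivizing it becomes exactly $\Span{T_{p_1}X,\dots,T_{p_k}X}$, which in particular depends only on the points and not on the chosen $z$ in their span.

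For the first bullet I would invoke the standard fact that the image of the differential of a morphism is contained in the tangent space to the closure of its image at the corresponding point, that is $\operatorname{Im}(d\Phi)\subseteq\hat T_{\hat z}\widehat{\Sec_k(X)}$. Projectivizing gives $\Span{T_{p_1}X,\dots,T_{p_k}X}\subseteq T_z\Sec_k(X)$ for smooth $p_i$ and any $z$ in their span; the case of arbitrary points then follows by a limiting argument, choosing $p_i(t)\to p_i$ with $p_i(t)$ smooth and passing to the Zariski closure.

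The second bullet is the heart of the matter, and the only place where genericity is essential, so this is where I expect the main difficulty to lie. I would deduce it from generic smoothness, which is available since we work over $\C$: the dominant map $\Phi$ is smooth over a dense open subset of $\widehat{\Sec_k(X)}$, equivalently $d\Phi$ attains its maximal rank $\dim\widehat{\Sec_k(X)}$ on a dense open $\hat U\subset \hat X\times\cdots\times\hat X$, which we may further shrink so that $\Phi(\hat U)$ lies in the smooth locus of $\widehat{\Sec_k(X)}$. At a point of $\hat U$ the subspace $\operatorname{Im}(d\Phi)$ has dimension $\dim\widehat{\Sec_k(X)}$ and is contained in $\hat T_{\hat z}\widehat{\Sec_k(X)}$, which has the same dimension because $\hat z$ is a smooth point; hence the two coincide. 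Projectivizing, and letting $U\subset X^{(k)}$ be the image of $\hat U$, yields the equality $\Span{T_{p_1}X,\dots,T_{p_k}X}=T_z\Sec_k(X)$ for $(p_1,\dots,p_k)\in U$ and for the general $z=\sum_i\lambda_i p_i$ in their span, namely those whose chosen representative lies in $\hat U$. The delicate point to control, and precisely the reason defective configurations can exist, is that one cannot drop the genericity of the tuple: outside $U$ the rank of $d\Phi$ drops and the inclusion of the first bullet becomes strict.
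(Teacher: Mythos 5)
The paper states Terracini's Lemma as a classical result and gives no proof, so there is nothing internal to compare against; your argument is the standard one (addition map on affine cones, computation of its differential, and generic smoothness in characteristic zero) and it is essentially correct and complete. The only caveat is in your treatment of the first bullet for arbitrary points: at a singular point $p_i$ the embedded tangent space $T_{p_i}X$ (the projectivized Zariski tangent space) can be strictly larger than the limit of tangent spaces at nearby smooth points $p_i(t)$, so your limiting argument only shows that this limit, not $T_{p_i}X$ itself, lands in $T_z\Sec_k(X)$; this is harmless here, since the paper only ever applies the lemma at general (hence smooth) points, but as stated for ``any $p_1,\ldots,p_k\in X$'' the singular case would need either a restriction to smooth points or a separate argument.
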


The following notions are related to the notion of secant variety.

\begin{definition}
  Let $X\subset\p^N$ be a non degenerate subvariety. We say that a
  point $z\in\p^N$ has \emph{rank $h$ with respect to $X$} if $z\in \langle p\rangle$, for some $p\in U_h^X$ and $z\not\in \langle p^\prime\rangle$ for any $p^\prime\in
  U_{h^\prime}^X$, with $h^\prime<h$.
\end{definition}


\begin{definition}\label{def:identifiability} A point $z\in\p^N$
  is $h$-\emph{identifiable with respect to} $X\subset\p^N$ if $z$ is of rank
  $h$ and $(\pi^X_h)^{-1}(z)$ is a single point. The variety $X$ is
  said to be \emph{$h$-identifiable} if the $h$-secant map $\pi_h^X$ is  birational, that
  is if the general point of $\Sec_h(X)$ is $h$-identifiable.
\end{definition}

It is clear, by Theorem \ref{th:Terracini}, that when $X$ is $h$-defective, or
more generally when $\pi_h^X$ is of fiber type, then  $X$ is not $h$-identifiable.

We now recall the recent result in \cite{MM}, in which the authors generalize the approach in \cite{CM} relating identifiability with the non defectivity of the secant variety. 

\begin{theorem}\label{Mass-Mella}
Let $X \subset \p^N$ be an irreducible and non-degenerate variety of dimension $n$, $h \geq 1$ an integer, and assume that:
\begin{itemize}
\item $(h+1)n+h \leq N$,
\item $X$ has non-degenerate Gauss map,
\item $X$ is not $(h+1)-$defective.
\end{itemize}
Then $X$ is $h-$identifiable.
\end{theorem}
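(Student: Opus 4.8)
The plan is to argue by contradiction: assume $X$ is not $h$-identifiable and show this forces $X$ to be $(h+1)$-defective, against the hypothesis. First I would record the elementary inequality $\dim\Sec_{h+1}(X)\le\dim\Sec_{h}(X)+n+1$, which holds because a general point of $\Sec_{h+1}(X)$ lies on a secant line joining a general point of $\Sec_h(X)$ to a general point of $X$. Combined with the hypothesis $(h+1)n+h=(h+1)(n+1)-1\le N$, this shows that non-defectivity propagates downward: if $X$ were $h$-defective then $X$ would be $(h+1)$-defective, so in fact $\dim\Sec_h(X)=h(n+1)-1=\dim\textit{sec}_h(X)$ and $\pi_h^X$ is generically finite. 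Hence the only way $X$ can fail to be $h$-identifiable is that $\pi_h^X$ has generic degree $\ge2$, i.e. a general $z\in\Sec_h(X)$ admits two distinct decompositions $z\in\Span{p_1,\dots,p_h}\cap\Span{q_1,\dots,q_h}$.

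Next I would pass to the tangential contact locus. By Terracini's Lemma (Theorem \ref{th:Terracini}), for general such $z$ one has $T_z\Sec_h(X)=\Span{T_{p_1}X,\dots,T_{p_h}X}=\Span{T_{q_1}X,\dots,T_{q_h}X}$, of dimension $h(n+1)-1$. A general hyperplane $H$ containing $T_z\Sec_h(X)$ is therefore tangent to $X$ at every $p_i$ and every $q_j$; let $\Sigma\subseteq X$ be its contact locus, the set of points $p$ with $T_pX\subseteq H$. I would then invoke the principle underlying \cite{CM}, in the spirit of the theory of weakly defective varieties: if the general contact locus $\Sigma$ is finite, the decomposition can be reconstructed from $H$ and $X$, so $X$ is $h$-identifiable. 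Thus the failure of identifiability produces, for general $z$, a contact locus $\Sigma$ of positive dimension, along which the second decomposition is forced to move in a tangency family inside $H$.

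The heart of the argument is to convert this positive-dimensional contact locus at level $h$ into a genuine secant defect at level $h+1$. Here the non-degenerate Gauss map enters: it guarantees that $T_pX$ genuinely varies as $p$ ranges over a positive-dimensional $\Sigma$, so that $\Sigma$ is not tangentially degenerate and the tangent spaces along it are honest new directions rather than a constant space. Choosing the $(h+1)$-st point $q$ on such a $\Sigma$ (possible precisely because $\dim\Sigma\ge1$) and applying Terracini at a general $z'\in\Span{z,q}\subset\Sec_{h+1}(X)$ yields $T_{z'}\Sec_{h+1}(X)=\Span{T_z\Sec_h(X),T_qX}\subseteq H$, since $T_qX\subseteq H$ by construction. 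Consequently $\dim T_{z'}\Sec_{h+1}(X)\le N-1$, and a dimension count of how many independent tangential directions the contact family supplies gives $\dim\Sec_{h+1}(X)<(h+1)(n+1)-1$, contradicting the non-$(h+1)$-defectivity of $X$.

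The step I expect to be the main obstacle is exactly this last one: making precise the claim that a positive-dimensional contact locus at level $h$ forces the $(h+1)$-st tangent space to drop below the expected dimension, i.e. a second-Terracini-type estimate relating $\dim\Sigma$ to the defect. The delicate points are (i) verifying that the constrained configuration $\{p_1,\dots,p_h,q\}$ with $q\in\Sigma$ still computes the \emph{general} tangent space to $\Sec_{h+1}(X)$ — this is immediate in the boundary case $N=(h+1)(n+1)-1$, but in the interior subgeneric range it requires the Gauss hypothesis together with a dimension count over the family of contact loci as $(p_1,\dots,p_h)$ varies; and (ii) ensuring the tangent spaces along $\Sigma$ are not so aligned that they fail to register the defect, which is again where non-degeneracy of the Gauss map is indispensable. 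The hypothesis $(h+1)n+h\le N$ is what keeps $h+1$ in the subgeneric regime, so that the expected dimension $(h+1)(n+1)-1$ does not exceed $N$ and $(h+1)$-defectivity is a meaningful condition to contradict.
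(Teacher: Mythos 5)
The paper does not prove this statement: Theorem \ref{Mass-Mella} is recalled verbatim from \cite{MM} (building on \cite{CM}) and used as a black box, so there is no internal proof to compare yours against. Judged on its own, your proposal correctly reproduces the known overall strategy (downward propagation of non-defectivity, passage to the tangential contact locus of a hyperplane containing $T_z\Sec_h(X)$, and the Chiantini--Ottaviani principle that a zero-dimensional tangential contact locus forces identifiability), but it has a genuine gap at exactly the step you flag as the obstacle, and that step is the entire content of the theorem.

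Concretely, two things break in your final paragraph. First, for the constrained configuration $\{p_1,\dots,p_h,q\}$ with $q\in\Sigma$, Terracini's Lemma (Theorem \ref{th:Terracini}) only gives the containment $\Span{T_{p_1}X,\dots,T_{p_h}X,T_qX}\subseteq T_{z'}\Sec_{h+1}(X)$; the equality you use holds only for a \emph{general} $(h+1)$-tuple, which this is not. So from $T_qX\subseteq H$ you may conclude that the span of the tangent spaces lies in $H$, but not that $T_{z'}\Sec_{h+1}(X)$ does, and hence you get no upper bound on $\dim\Sec_{h+1}(X)$ this way. Second, even granting that containment, lying in a single hyperplane only yields $\dim\Sec_{h+1}(X)\le N-1$, which is vacuous throughout the strictly subgeneric range $(h+1)(n+1)-1<N$ that the hypothesis $(h+1)n+h\le N$ allows; it is decisive only in the boundary case $N=(h+1)(n+1)-1$. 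Closing this gap is where \cite{CM} and \cite{MM} do the real work: one needs structural results on tangential contact loci (irreducibility/equidimensionality of the components through the $p_i$, how their dimension varies with $h$, and an induction transferring a positive-dimensional contact locus at level $h$ into an actual dimension drop of $\Sec_{h+1}(X)$), together with the non-degenerate Gauss map hypothesis to rule out tangentially degenerate contact families. A related, smaller imprecision: "finite contact locus implies the decomposition is reconstructible from $H$" is not immediate either, since a finite contact locus could a priori contain both $\{p_i\}$ and $\{q_j\}$; the correct statement concerns the union of components through the $p_i$ being exactly $\{p_1,\dots,p_h\}$, and its proof in \cite{COV} requires a monodromy argument. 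As written, your argument is a faithful road map to the literature rather than a proof.
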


In the next sections we will see how to rephrtase this theorem in the setting of powers of forms in order to give identifiability results for $k-$th powers of forms of degree $d$.

\subsection{Geometric construction of $(d,k)-$Veronese varieties}

Let us recall some facts from  \emph{apolarity theory}; the main reference is  \cite{Ger}.

\begin{notation}[Apolarity]	\label{apolarity-notation}
We consider two polynomial rings in $n+1$ variables, both endowed with the standard grading:
\begin{align*}
R&=\mathbb{C}[x_0,\dots,x_n]=\bigoplus_{i\in\mathbb{N}}R_i:=\mathbb{C}[x_0,\dots,x_n]_i\\
S&=\mathbb{C}[y_0,\dots,y_n]=\bigoplus_{i\in\mathbb{N}}S_i:=\mathbb{C}[y_0,\dots,y_n]_i.
\end{align*}
Interpreting the elements of $S$ as partial derivatives in the $x_i$'s, the pairing
$S_k\times R_l\to \mathbb{C}$, sends $(F_k,G_l)$ to the derivative $F_k\circ G_l\in R$ of $G_l$. 
If $k=l$ and if $\mathcal{I}\subset R$ is a homogeneous ideal, 
the \emph{orthogonal}  $\mathcal{I}_k^\perp\subset S_k$ is the following space of polynomials 
$$
\mathcal{I}_k^\perp=\{F\in S_k| F\circ G=0, \forall G\in \mathcal{I}_k\}.
$$
\end{notation}

It is a standard fact of representation theory for the linear group $GL(W)$, see for instance \cite{Lan}, that the space $\Sym^k(\Sym^d(W^\ast))$ can be decomposed as direct sum of $GL(W)-$modules in the following way: $$\Sym^k(\Sym^d(W^\ast))=\Sym^{dk}(W^\ast) \oplus \mathcal{E}$$ where $$\mathcal{E}=\H^0(\PP^N, \mathcal{I}_{V_n^d}(k))$$ is the $k-$th homogeneous part of the ideal of forms that vanish on the Veronese variety $V_n^d=\nu_d(\p^n)$, cf. notation of Section \ref{preliminaries-Veronese}. In slightly different terms, we can describe $\mathcal{E}=\H^0(\PP^N, \mathcal{O}_{\PP^N}(k)\otimes\mathcal{I}_{V_n^d})$ as the space of global sections of the linear system of degree$-k$ hypersurfaces of $\PP^N$ containing $V_n^d$.
We have the following exact sequence:
$$0 \rightarrow \mathcal{I}_{V_n^d}(k) \rightarrow \mathcal{O}_{\p^{N_d}}(k) \rightarrow \mathcal{O}_{V_n^d}(k) \rightarrow 0$$
Considering the long exact sequence in cohomology and using the fact that every Veronese variety is projectively normal, we have:
$$0 \rightarrow \H^0(\mathcal{I}_{V_n^d}(k)) \rightarrow \H^0(\mathcal{O}_{\p^{N_d}}(k)) \rightarrow \H^0(\mathcal{O}_{V_n^d}(k)) \rightarrow 0,$$
where we omit the indication of the underlying space in the expressions $H^0(\cdot)$, since it is clear from the context.
This shows that $$\dim(\mathcal{E})={N_d+k \choose k}-{n+dk \choose dk}.$$

It is easy to show that $GL(V)-$modules $\Sym^{dk}(W^\ast)$ and $\mathcal{E}$ are apolar, i.e. for every pair of forms $F,G \in \C[x_0,\dots,x_{N_d}]$ with $F \in \Sym^{dk}(W^\ast)$ and $G \in \mathcal{E}$, one has that $F \circ G=0$.

The constructions above fit into the following commutative diagram: 

\begin{displaymath}\label{diagram powers}
\xymatrix{
&&&\p(\Sym^k(\Sym^d(W^*))) \ar@{-->}[dd]^{\pi_{\mathbb{E}}} \\
 \p^n \ar[r] & \p(\Sym^d(W^*)) \ar[rru]_{\nu_k} \ar[rrd]_{\phi_{dk}}\\
&&&V_{n,d}^k \subset\p(\Sym^{dk}(W^*))
} 
\end{displaymath}
where the map $\pi_{\mathbb{E}}$ is the linear projection from the (projective) linear space $\mathbb{E}=\mathbb{P}(\mathcal{E})$.
We now prove that the map $\phi_{dk}$ is in fact an isomorphism.

\begin{lemma}\label{immersion}
With the above notations, we have $\Sec_2(V_{N_d}^k) \cap \mathbb{E}=\emptyset$. In particular the map $\phi_{dk}$ is an embedding.

\end{lemma}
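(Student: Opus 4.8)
The plan is to reduce the embedding statement to the disjointness $\Sec_2(V_{N_d}^k)\cap\mathbb{E}=\emptyset$ and then to establish the latter by a direct computation on affine cones. First I would record the geometric meaning of $\pi_{\mathbb{E}}$: under the splitting $\Sym^k(\Sym^d(W^*))=\Sym^{dk}(W^*)\oplus\mathcal{E}$, the projection from $\mathbb{E}=\p(\mathcal{E})$ is the projectivization of the multiplication map $m\colon\Sym^k(\Sym^d(W^*))\to\Sym^{dk}(W^*)$ that sends a symmetric tensor of forms to the honest product form, and $\mathcal{E}=\ker m$ since $\Sym^{dk}(W^*)$ occurs with multiplicity one in the plethysm. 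Because $m(F^k)=F^k$ (the genuine degree $dk$ form) is nonzero for $F\neq 0$, one gets $V_{N_d}^k\cap\mathbb{E}=\emptyset$ and $\phi_{dk}=\pi_{\mathbb{E}}\circ\nu_k$. The bridge to the ``in particular'' is the classical criterion that a projection from a center $L$ disjoint from a smooth projective variety $X$ restricts to a closed embedding precisely when $\Sec_2(X)\cap L=\emptyset$, where $\Sec_2(X)$ is understood to contain the tangential variety; granting the disjointness, injectivity follows because a secant line through two distinct image points would meet the center, and unramifiedness follows because an embedded tangent space meeting the center would yield a tangent line through $\mathbb{E}$.

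It then remains to prove $\Sec_2(V_{N_d}^k)\cap\mathbb{E}=\emptyset$. On affine cones $\widehat{V_{N_d}^k}=\{F^k:F\in\Sym^d(W^*)\}$, and a general point of the secant is $F^k+G^k$. Such a point lies in $\mathbb{E}$ exactly when $m(F^k+G^k)=F^k+G^k=0$ in $\Sym^{dk}(W^*)$, that is when $F^k=-G^k$ as degree $dk$ forms. Here I would invoke that $\C[x_0,\dots,x_n]$ is a unique factorization domain: factoring $F^k-(\omega G)^k$ with $\omega^k=-1$ shows that $F=cG$ for some constant $c$ with $c^k=-1$, hence $[F]=[G]$ in $\p^{N_d}$; but then already $F^k+G^k=c^kG^k+G^k=0$ as a symmetric tensor, so the point is the vertex of the cone and contributes nothing to $\mathbb{E}$. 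Thus every genuine secant point avoids $\mathbb{E}$.

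To treat the closure I would handle the tangential variety separately, using that $\Sec_2(V_{N_d}^k)$ is the closed union of the genuine secant lines together with the tangent lines. Differentiating $\nu_k$ shows that the embedded tangent space at $[F^k]$ equals $\{[F^{k-1}H]:H\in\Sym^d(W^*)\}$, and $m(F^{k-1}H)=F^{k-1}H$ as a form vanishes if and only if $H=0$, again because the polynomial ring is a domain; and $H=0$ forces the symmetric tensor $F^{k-1}H$ to vanish as well. Hence every embedded tangent space, and so the whole tangential variety, is disjoint from $\mathbb{E}$. Combining the two cases gives $\Sec_2(V_{N_d}^k)\cap\mathbb{E}=\emptyset$, and the projection criterion yields that $\phi_{dk}$ is a closed embedding of the complete variety $\p^{N_d}$.

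I expect the only genuine subtlety to be the passage to the closure: the elementary unique factorization argument disposes of honest secants and of tangent directions at once, so the delicate point is to argue that $\Sec_2$ contributes nothing beyond secant and tangent lines and that the precise version of the projection criterion—one that incorporates the tangential variety—is what upgrades set-theoretic injectivity to a closed immersion.
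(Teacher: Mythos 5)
Your proof is correct and follows the same skeleton as the paper's: both arguments decompose a point of $\Sec_2(V_{N_d}^k)$ into the three types $F^k$, $F^{k-1}H$ (tangential) and $F^k+G^k$ (honest secant), and then show that no such nonzero tensor can lie in $\mathbb{E}$. Where you diverge is in the mechanism of that last step. The paper stays upstairs in $\PP^{N_d}$: it reads membership in $\mathbb{E}=\H^0(\mathcal{I}_{V_{n,d}}(k))$ as saying that the degree-$k$ hypersurface defined by the tensor contains the Veronese $V_{n,d}$, factors that hypersurface into hyperplanes (e.g.\ $F^k+G^k=\prod_i(F-\omega_i G)$ with $\omega_i^k=-1$), and concludes from irreducibility and non-degeneracy of $V_{n,d}$ that one linear factor must vanish identically, forcing the tensor itself to be zero. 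You instead push everything down to $\PP^n$ through the multiplication map $m$, identify $\mathbb{E}$ with $\ker m$, and invoke unique factorization in $\C[x_0,\dots,x_n]$ to reach the same conclusion. The two routes are equivalent in substance --- ``the form vanishes on $V_{n,d}$'' is exactly ``$m$ kills it'' --- so this is a difference of packaging rather than of idea; if anything, your version is the more careful one, since it makes explicit both the projection criterion (disjointness of the center from $\Sec_2$, tangential variety included, yields a closed immersion) and the fact that the closure of the union of honest secant lines adds only tangent lines, both of which the paper uses tacitly. I see no gaps.
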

\begin{proof}
Note that an element $F \in \Sec_2(V_{N_d}^k)$ is either of the form $F=L^k$, $F=L^{k-1}M$ or $F=M^k+N^k$, where $L,M,N \in \C[x_0,\dots,x_{N_d}]_1$ are linear forms in $\p(\Sym^k(\Sym^d(W^\ast)))$, or equivalently degree $d$ hypersurfaces in $\p^n$. In particular, if there exist such an $F$ with $F \in \mathcal{E}=\H^0(\mathcal{I}_{V_{n,d}}(k))$, then $V_{n,d}$ would be contained in the vanishing locus of $F$. Moreover the zero locus of $F$ is either a hyperplane ($\{L=0\}$), a union of two hyperplanes ($\{L=0\} \cup \{M=0\}$) or the union of a hyperplane and a subscheme $S$ of degree $k-1$ ($\{M+\zeta N=0\} \cup S$), where $\zeta$ is a $k-$th root of unity. Finally since $V_{n,d}$ is non-degenerate and irreducible the claim follows.

\end{proof}

\subsection{Identifiablity for $(d,k)-$Veronese varieties}

From now on we will work with the projective notation, in particular $\mathbb{E}$ has to be intended as the projectivization of the affine spave $\mathcal{E}$ introduced in the previous section.
Let us start by characterizing hyperplanes in $\mathbb{E}$ as particular linear subsystems of hypersurfaces in $\p^{N_d}$.
Denote by $\pi_{dk}$ the linear projection from the linear space $\p(\Sym^{dk}(W^\ast))$ to $\mathbb{E}$, i.e. $$\pi_{dk}:\p(\Sym^k(\Sym^d(W^\ast))) \dashrightarrow \mathbb{E}$$

\begin{lemma}\label{iperpiani E}
Let $\H^0(\mathcal{O}_{\p^{N_d^k}}(1) \otimes \mathcal{I}_{\p(\Sym^{dk}(W^\ast))})$ be the complete linear system of hyperplane sections of $\p(\Sym^k(\Sym^d(W^\ast)))$ containing the linear space $\p(\Sym^{dk}(W^\ast))$. 
Then 
$$\nu_k^*(\H^0(\mathcal{O}_{\p^{N_d^k}}(1) \otimes \mathcal{I}_{\p(\Sym^{dk}(W^\ast))})) \cong \H^0(\mathcal{O}_{\p^{N_d}}(k) \otimes \mathcal{I}_{V_{n,d}})$$
\end{lemma}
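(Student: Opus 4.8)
The plan is to reduce the statement to a single geometric fact about the linear span of the Veronese image, after which everything follows formally from the fact (recorded in the Remark above) that $\nu_k$ is the embedding associated to the complete linear system $|\mathcal{O}_{\p^{N_d}}(k)|$. First I would note that, since $\nu_k$ is a complete embedding, pullback induces a linear isomorphism $\nu_k^\ast\colon \H^0(\mathcal{O}_{\p^{N_d^k}}(1)) \xrightarrow{\sim} \H^0(\mathcal{O}_{\p^{N_d}}(k))$, under which a hyperplane $H=\{\ell=0\}$ is carried to the degree-$k$ form $\nu_k^\ast\ell$ vanishing exactly on $\nu_k^{-1}(H)$. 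Hence, for any subvariety $Z\subseteq\p^{N_d}$, one has $\nu_k^\ast\ell\in\H^0(\mathcal{O}_{\p^{N_d}}(k)\otimes\mathcal{I}_{Z})$ if and only if $\nu_k(Z)\subseteq H$. Applying this with $Z=V_{n,d}$, the right-hand side $\H^0(\mathcal{O}_{\p^{N_d}}(k)\otimes\mathcal{I}_{V_{n,d}})$ is precisely the $\nu_k^\ast$-image of the space of hyperplanes containing $\nu_k(V_{n,d})$; and since $\ell$ is linear, $H\supseteq\nu_k(V_{n,d})$ if and only if $H$ contains the linear span $\Span{\nu_k(V_{n,d})}$. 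Therefore the Lemma is equivalent to the single identity
$$\Span{\nu_k(V_{n,d})}=\p(\Sym^{dk}(W^\ast)).$$

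To prove this identity, observe that $\nu_k(V_{n,d})=\{[(L^d)^{\cdot k}]\ :\ L\in W^\ast\}$, where $(L^d)^{\cdot k}$ denotes the $k$-th symmetric power of $L^d\in\Sym^d(W^\ast)$ inside $\Sym^k(\Sym^d(W^\ast))$. Let $S:=\Span{\nu_k(V_{n,d})}$; it is a $GL(W)$-submodule of $\Sym^k(\Sym^d(W^\ast))$ because the set $\{(L^d)^{\cdot k}\}$ is $GL(W)$-stable. In the decomposition $\Sym^k(\Sym^d(W^\ast))=\Sym^{dk}(W^\ast)\oplus\mathcal{E}$ the summand $\Sym^{dk}(W^\ast)$ is the irreducible $GL(W)$-module of highest weight $(dk,0,\dots,0)$, and it occurs with multiplicity one; in particular the weight $(dk,0,\dots,0)$ space of $\Sym^k(\Sym^d(W^\ast))$ is one-dimensional, spanned by $(x_0^{\,d})^{\cdot k}$, and lies inside this summand. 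Since $S$ contains $(x_0^{\,d})^{\cdot k}$ and is $GL(W)$-stable, equivariance forces $(L^d)^{\cdot k}\in\Sym^{dk}(W^\ast)$ for every $L$, so $S\subseteq\Sym^{dk}(W^\ast)$; and as $S$ is a nonzero submodule of the \emph{irreducible} module $\Sym^{dk}(W^\ast)$, it must equal it. This establishes the displayed identity.

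Combining the reduction of the first paragraph with this identity, $\nu_k^\ast$ carries the hyperplanes through $\p(\Sym^{dk}(W^\ast))$ isomorphically onto $\H^0(\mathcal{O}_{\p^{N_d}}(k)\otimes\mathcal{I}_{V_{n,d}})$, which is the claim. The hard part will be none of the soft pullback bookkeeping but rather the identity $\Span{\nu_k(V_{n,d})}=\p(\Sym^{dk}(W^\ast))$: the genuine obstacle is to verify that this span is \emph{exactly} the distinguished summand of the decomposition, and not merely some abstractly isomorphic complement of $\mathcal{E}$. This is precisely where the multiplicity-one of $\Sym^{dk}(W^\ast)$ in the plethysm $\Sym^k(\Sym^d(W^\ast))$ — equivalently, the irreducibility of the summand together with the top-weight computation above — is indispensable, and I would make sure to isolate it as the key lemma.
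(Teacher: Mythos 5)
Your proof is correct, and it turns on the same pivot as the paper's: the identity $\Span{\nu_k(V_{n,d})}=\p(\Sym^{dk}(W^\ast))$ together with the fact that $\nu_k^\ast$ is an isomorphism on global sections. The mechanics differ in two places. First, the paper establishes only the inclusion $\H^0(\mathcal{O}_{\p^{N_d}}(k)\otimes\mathcal{I}_{V_{n,d}})\subseteq \nu_k^*(\H^0(\mathcal{O}_{\p^{N_d^k}}(1)\otimes\mathcal{I}_{\p(\Sym^{dk}(W^\ast))}))$ directly and then forces equality by the dimension count $\h^0(\mathcal{O}_{\p^{N_d}}(k)\otimes\mathcal{I}_{V_{n,d}})=\dim(\mathcal{E})=\cdim(\p(\Sym^{dk}(W^\ast)))$, whereas you obtain both inclusions at once from the observation that a hyperplane contains a variety if and only if it contains its linear span; your version is marginally cleaner and avoids the count. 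Second, and more substantively, the paper simply asserts $\Span{V_{n,dk}}=\p(\Sym^{dk}(W^\ast))$, implicitly leaning on the $GL(W)$-module decomposition set up just before the lemma, while you isolate this as the key point and prove it via the one-dimensionality of the $(dk,0,\dots,0)$ weight space and the irreducibility of $\Sym^{dk}(W^\ast)$; that argument is sound and supplies the one step the paper leaves to the reader. One small wording issue: in the sentence deducing $(L^d)^{\cdot k}\in\Sym^{dk}(W^\ast)$, what you actually use is the $GL(W)$-stability of the summand $\Sym^{dk}(W^\ast)$ (every $(L^d)^{\cdot k}$ with $L\ne0$ is conjugate, up to scalar, to $(x_0^{d})^{\cdot k}$, which lies in that summand), not the stability of $S$; the latter is only needed afterwards, when you invoke irreducibility to conclude $S=\Sym^{dk}(W^\ast)$.
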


\begin{proof}
Let $H \in \H^0(\mathcal{O}_{\p^{N_d}}(k) \otimes \mathcal{I}_{V_{n,d}})$ be a degree $k$ hypersurface in $\p^{N_d}$ that contains the Veronese $V_{n,d}$. Then the linear span $\overline{H}$ of $(\nu_k)_{*}(H)$ is a hyperplane in $\p^{N_d^k}$ that contains $V_{n,dk}:=(\nu_k)_{*}(V_{n,d})$. Since $\Span{V_{n,dk}}=\p(\Sym^{dk}(W^\ast))$, we have that $$\H^0(\mathcal{O}_{\p^{N_d}}(k) \otimes \mathcal{I}_{V_{n,d}}) \subseteq \nu_k^*(\H^0(\mathcal{O}_{\p^{N_d^k}}(1) \otimes \mathcal{I}_{\p(\Sym^{dk}(W^\ast))})$$
To conclude, it is enough to observe that $$\h^0(\mathcal{O}_{\p^{N_d}}(k) \otimes \mathcal{I}_{V_{n,d}})=\dim(\mathcal{E})=\cdim(\p(\Sym^{dk}(W^\ast))),$$ 
where $\cdim$ here indicates the codimension in $\p(\Sym^k(\Sym^d(W^\ast)))$, and $\nu_k^*$ induces an isomorphism of global sections.
\end{proof}

We need the following easy technical lemma:

\begin{lemma}\label{finite F}

The linear projection $$\pi_{dk}: \p(\Sym^k(\Sym^d(W^\ast))) \dashrightarrow \mathbb{E}=\mathbb{P}(\H^0(\mathcal{I}_{V_{n,d}}(k)))$$ is generically finite when restricted to $V_{N_d,k}=\nu_k(\p(\Sym^d(W^\ast)))$.

\end{lemma}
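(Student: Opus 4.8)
The plan is to reinterpret $\pi_{dk}\circ\nu_k$ as the map attached to an explicit linear system, and then to prove generic finiteness by showing that its differential is generically injective, i.e. that the image has dimension $N_d=\dim V_{N_d,k}$. Set $\psi:=\pi_{dk}\circ\nu_k\colon\p^{N_d}\dashrightarrow\mathbb{E}$ and $\Lambda:=\p(\Sym^{dk}(W^\ast))$, so that $\pi_{dk}$ is the linear projection with centre $\Lambda$, while $\mathbb{E}$ and $\Lambda$ are complementary linear subspaces of $\p(\Sym^k(\Sym^d(W^\ast)))$. By Lemma~\ref{iperpiani E}, pulling back the hyperplanes of $\mathbb{E}$ along $\nu_k$ yields exactly $\H^0(\mathcal{O}_{\p^{N_d}}(k)\otimes\mathcal{I}_{V_{n,d}})$; hence $\psi$ is the rational map defined by the linear system of degree$-k$ hypersurfaces through the Veronese $V_{n,d}=\nu_d(\p^n)$. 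Since $\nu_k$ is an embedding and $\pi_{dk}$ is linear, $\pi_{dk}|_{V_{N_d,k}}$ is generically finite if and only if $\dim\overline{\psi(\p^{N_d})}=N_d$, equivalently if and only if the differential $d\psi_{[F]}$ is injective for a general $[F]$.

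Next I would compute this differential as $d\psi_{[F]}=(d\pi_{dk})_{\nu_k([F])}\circ(d\nu_k)_{[F]}$. Let $m\colon\Sym^k(\Sym^d(W^\ast))\to\Sym^{dk}(W^\ast)$ be the multiplication map, whose kernel is $\mathcal{E}$, let $\sigma$ be its $GL(W)$-equivariant splitting, and write $\operatorname{pr}_{\mathcal E}=\mathrm{id}-\sigma\circ m$ for the projection onto $\mathcal E$. The kernel of $d\pi_{dk}$ at a point $x\notin\Lambda$ is the direction of its fibre, namely the span of $\Lambda$ and $x$; since $d\nu_k$ is injective with image the affine tangent space $\hat T_{\nu_k([F])}V_{N_d,k}$, and since $\nu_k([F])\notin\Lambda$ for general $F$ (the Veronese $V_{N_d,k}$ is non-degenerate, so $\operatorname{pr}_{\mathcal E}(\nu_k([F]))\neq0$), injectivity of $d\psi_{[F]}$ is equivalent to
$$\hat T_{\nu_k([F])}V_{N_d,k}\cap\sigma(\Sym^{dk}(W^\ast))=0 .$$

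To make this condition concrete, recall that the affine tangent space is spanned by the symmetric products $F^{k-1}H$ with $H\in\Sym^d(W^\ast)$, viewed as elements of $\Sym^k(\Sym^d(W^\ast))$, and that such an element lies in $\sigma(\Sym^{dk}(W^\ast))$ precisely when $\operatorname{pr}_{\mathcal E}(F^{k-1}H)=0$. Thus the intersection above is governed by the linear map
$$\mu_F\colon\Sym^d(W^\ast)\longrightarrow\mathcal E,\qquad H\longmapsto\operatorname{pr}_{\mathcal E}(F^{k-1}H),$$
and $\pi_{dk}|_{V_{N_d,k}}$ is generically finite if and only if $\mu_F$ is injective for a general $F$.

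The main obstacle is exactly this last injectivity, which carries the whole content of the statement; note that it is the counterpart, for the complementary projection, of the relation $\hat T\cap\mathcal E=0$ already encoded in Lemma~\ref{immersion}. I would establish it by semicontinuity: it suffices to exhibit a single $F$ for which $\mu_F$ is injective, and a convenient choice is a form supported on a coordinate subspace (or a generic monomial combination), for which $\operatorname{pr}_{\mathcal E}(F^{k-1}H)$ can be read off combinatorially. Injectivity forces $\dim\mathcal E\geq N_d+1$, which holds in the range relevant to the paper, and the verification that no nonzero $H$ sends $F^{k-1}H$ into the submodule $\sigma(\Sym^{dk}(W^\ast))$ is where the genuine work lies.
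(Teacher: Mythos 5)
Your reduction is sound as far as it goes: identifying $\psi=\pi_{dk}\circ\nu_k$ with the rational map attached to $|\mathcal{O}_{\p^{N_d}}(k)\otimes\mathcal{I}_{V_{n,d}}|$, translating generic finiteness (in characteristic zero) into generic injectivity of the differential, and rewriting that as $\hat T_{\nu_k([F])}V_{N_d,k}\cap\sigma(\Sym^{dk}(W^\ast))=0$, i.e.\ injectivity of $\mu_F$, is all correct. But the argument stops exactly where the content of the lemma lies, as you yourself say: the injectivity of $\mu_F$ for one (hence a general) $F$ is asserted, not proved. The proposed witness, ``a form supported on a coordinate subspace (or a generic monomial combination)'', is not a harmless verification: $\operatorname{pr}_{\mathcal E}$ is the projection along the $GL(W)$-equivariant complement $\sigma(\Sym^{dk}(W^\ast))$, which is not spanned by monomials of $\Sym^k(\Sym^d(W^\ast))$, so computing $\operatorname{pr}_{\mathcal E}(F^{k-1}H)$ is a genuine plethysm-type calculation; and the statement really does fail for special $F$ (for $F=L^d$ one has $F^k=\sigma(L^{dk})$, so $\mu_{L^d}$ has nonzero kernel, and more generally the centre of projection meets $V_{N_d,k}$ along the small Veronese). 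So as written this is a correct reduction, not a proof.

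The paper's own argument bypasses the infinitesimal computation entirely: since $\psi$ is given by the \emph{complete} linear system of degree-$k$ hypersurfaces containing $V_{n,d}$, whose base locus is $V_{n,d}$ set-theoretically, one can exploit reducible members. Concretely, because the Veronese is cut out set-theoretically by forms of degree $k-1$ (quadrics suffice once $k\ge 3$), the subsystem $\{G\cdot H\}$ with $G\in \H^0(\mathcal{I}_{V_{n,d}}(k-1))$ and $H$ a hyperplane shows that the fibre of $\psi$ through a general point $x$ is contained in $\Bs|\mathcal{I}_{V_{n,d}}(k-1)|\cup\{x\}=V_{n,d}\cup\{x\}$, so $\psi$ is even birational onto its image. (Note that the mere fact that the base locus equals $V_{n,d}$ is not enough for an arbitrary subsystem --- the system spanned by $x^3,x^2y,xy^2,y^3$ on $\PP^2$ has base locus a single point but is not generically finite --- so completeness of the system, or the availability of lower-degree equations, is essential.) If you want to keep your differential-geometric route, the cleanest repair is to deduce the injectivity of $\mu_F$ from this global separation statement rather than attempt a direct monomial computation.
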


\begin{proof}
The map $$\pi_{dk_{|\nu_k(\p(\Sym^d(W^\ast)))}}:V_{N_d,k} \dashrightarrow \mathbb{E}$$ is induced by a linear subsystem $\mathcal{F}$ of the line bundle $\mathcal{L}=\mathcal{O}_{\p^{N_d^k}}(1)$ 
with $$\nu_{k}^{*}(\mathcal{F})=|\mathcal{O}_{\Sym^d(W^\ast)}(1) \otimes \mathcal{I}_{V_{n,d}}(k)|.$$
Now the claim follows easily from the fact that $\H^0(\mathcal{I}_{V_{n,d}}(k))$ defines the Veronese variety $V_{n,d}$ set-theoretically.
\end{proof}

Let $p_1,\dots,p_h \in X \subset \p^N$ be general points. By Lemma \ref{th:Terracini}, we have that $\Sec_h(X)$ has the expected dimension if and only if $\H^0(\mathcal{O}_X(1) \otimes \mathcal{I}_{p_1^2,\dots,p_h^2})$ has the  \textit{expected} dimension, i.e. $$\dim \H^0(\mathcal{O}_X(1) \otimes \mathcal{I}_{p_1^2,\dots,p_h^2})=\max\{0,N+1-(n+1)h\}.$$

\begin{notation}
If $p_1,\dots,p_h \in X$ are general points, then we denote $\mathcal{L}_{h,X}:=|\mathcal{O}_X(1) \otimes \mathcal{I}_{p_1^2,\dots,p_h^2}|$.
\end{notation}

Before moving on to the explicit description of the identifiability for $V_{n,d}^k$, let us first prove a general proposition about linear systems of projected varieties.

\begin{proposition}\label{E,F}
Let $X \subset \p^N$ be a smooth non-degenerate projective variety. Moreover let $\p^N = \Span{\mathbb{F},\mathbb{E}}$ with $\mathbb{F},\mathbb{E}$ skew linear subspaces. Let $\pi_{\mathbb{E}}:\PP^N\to \mathbb{F}$ and $\pi_{\mathbb{F}}:\PP^N\to \mathbb{E}$ be the natural projections. If the projections restricted to $X$ are generically finite and $\mathcal{L}_{h,X}$ has the expected dimension, then $\mathcal{L}_{h,\pi_{\mathbb{F}}(X)}$ has the expected dimension if and only if $\mathcal{L}_{h,\pi_{\mathbb{E}}(X)}$ has the expected dimension.
\end{proposition}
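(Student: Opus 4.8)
The plan is to relate all three linear systems $\mathcal{L}_{h,X}$, $\mathcal{L}_{h,\pi_{\mathbb{F}}(X)}$ and $\mathcal{L}_{h,\pi_{\mathbb{E}}(X)}$ by pulling them back to $X$ along the relevant projections and comparing them inside the common ambient system $|\mathcal{O}_X(1)\otimes\mathcal{I}_{p_1^2,\dots,p_h^2}|$. The key geometric input is that since $\mathbb{F}$ and $\mathbb{E}$ are skew and span $\mathbb{P}^N$, a hyperplane of $\mathbb{F}$ corresponds precisely to a hyperplane of $\mathbb{P}^N$ containing $\mathbb{E}$, and symmetrically; so $\pi_{\mathbb{F}}^*$ identifies $\mathcal{L}_{h,\pi_{\mathbb{F}}(X)}$ with the subsystem of $\mathcal{L}_{h,X}$ cut out by the extra condition of containing $\mathbb{E}$, and likewise $\pi_{\mathbb{E}}^*$ identifies $\mathcal{L}_{h,\pi_{\mathbb{E}}(X)}$ with the subsystem of sections containing $\mathbb{F}$. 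Here I would use that the projections restricted to $X$ are generically finite, exactly as in Lemma \ref{finite F}, to guarantee that pullback of sections is injective and that the double-point conditions at the general $p_i$ are transported faithfully.

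The first step is to make precise the decomposition of $\H^0(\mathcal{O}_{\mathbb{P}^N}(1))$ induced by $\mathbb{P}^N=\Span{\mathbb{F},\mathbb{E}}$: every hyperplane is either the pullback of a hyperplane of $\mathbb{F}$ under $\pi_{\mathbb{E}}$, or the pullback of a hyperplane of $\mathbb{E}$ under $\pi_{\mathbb{F}}$, or a genuine combination, and the first two families are the subsystems vanishing on $\mathbb{E}$, respectively on $\mathbb{F}$. Restricting to $X$ and imposing double points at the $p_i$, I would record the three numbers $e(X):=\dim\mathcal{L}_{h,X}$, $e_{\mathbb{F}}:=\dim\mathcal{L}_{h,\pi_{\mathbb{F}}(X)}$, $e_{\mathbb{E}}:=\dim\mathcal{L}_{h,\pi_{\mathbb{E}}(X)}$ and establish the additive relation coming from the direct sum decomposition: the ambient system splits, compatibly with the imposed singularities, so that the virtual (expected) dimensions satisfy $\vdim\mathcal{L}_{h,X}+1 = (\vdim\mathcal{L}_{h,\pi_{\mathbb{F}}(X)}+1)+(\vdim\mathcal{L}_{h,\pi_{\mathbb{E}}(X)}+1)$, since $N+1=\dim\mathbb{F}+\dim\mathbb{E}+2$ and the number of double-point conditions is $(n+1)h$ on each of the three systems (the projections being generically finite, $\dim\pi_{\mathbb{F}}(X)=\dim\pi_{\mathbb{E}}(X)=n$).

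Given the hypothesis that $\mathcal{L}_{h,X}$ has the expected dimension, the effective dimensions also add up to at most the sum of expected dimensions, and each effective dimension is at least its expected value; combining these with the additivity forces equality to propagate. Concretely, I would argue that if $\mathcal{L}_{h,X}$ is non-special then neither of the two summand systems can be special without the other being \emph{sub}-expected, which is impossible since effective dimension is always $\ge$ expected dimension. This yields the equivalence: $\mathcal{L}_{h,\pi_{\mathbb{F}}(X)}$ is non-special $\iff$ $\mathcal{L}_{h,\pi_{\mathbb{E}}(X)}$ is non-special.

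The main obstacle I anticipate is the bookkeeping around \emph{expected versus virtual} dimension and the handling of the $\max\{0,\cdot\}$ truncation: the clean additivity above holds for virtual dimensions, but ``expected dimension'' involves truncating negatives to the empty system, and one must rule out the degenerate case where, say, $\mathcal{L}_{h,\pi_{\mathbb{F}}(X)}$ is expected-empty while $\mathcal{L}_{h,\pi_{\mathbb{E}}(X)}$ is not. Care is also needed to verify that imposing a double point at a general $p_i\in X$ pushes forward to imposing a genuine double point (and not something fatter or degenerate) on each $\pi_{\mathbb{F}}(X)$ and $\pi_{\mathbb{E}}(X)$; this is where generic finiteness of the projections, together with generality of the $p_i$, is essential, so that the tangent spaces are mapped isomorphically and the $(n+1)$-dimensional first-order conditions are preserved on each image. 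Once these two points are settled, the equivalence follows formally from the additive relation on dimensions.
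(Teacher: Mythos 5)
Your opening identification agrees with the paper's: under pullback along the two projections, $\mathcal{L}_{h,\pi_{\mathbb{F}}(X)}$ and $\mathcal{L}_{h,\pi_{\mathbb{E}}(X)}$ become the subsystems of $\mathcal{L}_{h,X}$ given by the hyperplanes containing $\mathbb{F}$, respectively $\mathbb{E}$ (you have the two labels swapped, which is harmless by symmetry). The genuine gap is the additive relation on which the rest of your argument rests. Counting vector space dimensions, $\vdim\mathcal{L}_{h,X}+1=(N+1)-(n+1)h$, whereas $(\vdim\mathcal{L}_{h,\pi_{\mathbb{F}}(X)}+1)+(\vdim\mathcal{L}_{h,\pi_{\mathbb{E}}(X)}+1)=\left(\dim\mathbb{E}+1-(n+1)h\right)+\left(\dim\mathbb{F}+1-(n+1)h\right)=(N+1)-2(n+1)h$: the $(n+1)h$ double-point conditions are counted once on the left and twice on the right, exactly as you yourself observe when you say the conditions are imposed ``on each of the three systems''. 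So your identity is off by $(n+1)h$ and fails for every $h\ge 1$. The inequalities that do survive, namely $e_{\mathbb{F}}+e_{\mathbb{E}}+1\le e(X)=\vdim\mathcal{L}_{h,X}$ (the two subsystems meet trivially, since a form vanishing on both $\mathbb{F}$ and $\mathbb{E}$ vanishes on $\Span{\mathbb{F},\mathbb{E}}=\p^N$) together with $e_\bullet\ge\vdim_\bullet$, leave a slack of exactly $(n+1)h$, which is perfectly compatible with one projected system being special and the other not; no equality propagates. A sanity check confirms the argument proves too much: if your squeeze worked, non-speciality of $\mathcal{L}_{h,X}$ alone (which in the application holds by Alexander--Hirschowitz) would already force non-speciality of $\ls_{N,k}(V,2^h)$, and Theorem \ref{main-thm-linearsystems} together with the degeneration machinery of Sections \ref{section-degeneration}--\ref{section-proof} would be superfluous. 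The conditional ``if and only if'' is in the statement precisely because one of the two non-specialities must be supplied as a hypothesis.

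The paper's proof uses the same splitting but a different accounting. The decomposition $\p^N=\Span{\mathbb{F},\mathbb{E}}$ induces a linear projection $\pi$ defined on $\mathcal{L}_{h,X}$ whose kernel is $\mathcal{L}_{h,X}\otimes\mathcal{I}_{\mathbb{F}}=\mathcal{L}_{h,\pi_{\mathbb{F}}(X)}$ and whose image is identified with $\mathcal{L}_{h,X}\otimes\mathcal{I}_{\mathbb{E}}=\mathcal{L}_{h,\pi_{\mathbb{E}}(X)}$; rank--nullity then determines the dimension of the image from those of the source and of the kernel. There the hypothesis that $\mathcal{L}_{h,\pi_{\mathbb{F}}(X)}$ is non-special is genuinely used (it is the kernel), the conclusion is drawn only for the other system, and the converse implication follows by exchanging the roles of $\mathbb{E}$ and $\mathbb{F}$. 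To repair your argument you would need to replace the false additivity by this rank--nullity identity, in which the $(n+1)h$ conditions appear once in the source and once in the kernel, so that the totals balance.
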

\begin{proof}
Note that by symmetry of $\mathbb{E}$ and $\mathbb{F}$ it suffices to prove only one of the implications.
Let $q_1,\dots,q_h$ be general points on $\pi_{\mathbb{E}}(X)$, $x_i \in \pi_{\mathbb{E}}^{-1}(q_i)$ a general point in every fiber  and $z_i=\pi_{\mathbb{F}}(p_i)$.
By the generality assumption we have that $x_1,\dots,x_h$ are general and so are $z_1,\dots,z_h$. Since $\pi_{\mathbb{F}}$ restricted to $X$ is generically finite the space of hyperplanes $\mathcal{L}_{h,\pi_{\mathbb{F}}(X)}=|\mathcal{O}_{\mathbb{E}}(1) \otimes \mathcal{I}_{z_1^2,\dots,z_h^2}|$ correspond to $\mathcal{L}_{h,X}\otimes  \mathcal{I}_{\mathbb{F}}$. Now the splitting $\p^N= \Span{\mathbb{F},\mathbb{E}}$ induces the linear projection $$\pi:\mathcal{L}_{h,X} \mapsto \mathcal{L}_{h,X} \otimes \mathcal{I}_{\mathbb{E}}$$ such that $Ker(\pi)=\mathcal{L}_{h,\pi_{\mathbb{F}}(X)}$. Since, by assumption, both the source and the kernel have the expected dimension by the rank-nullity theorem the assertion follows.
\end{proof}

We are finally able to characterize the identifiability properties for the case of powers of forms. 
Consider the following linear system of all hypersurfaces of $\PP^{N_d}$ of degree $k$ containing the Veronese variety $V_{n,d}\subset\PP^{N_d}$ and double at the points $p_1,\dots,p_h$ that lie in general position in $\PP^{N_d}$:
$$\ls_{N_d}(V_{n,d},2^h):=\mathcal{O}_{\p^{N_d}}(k) \otimes \mathcal{I}_{V_{n,d}} \otimes \mathcal{I}_{p_1^2,\dots,p_h^2}.$$

\begin{proposition}\label{identifiability tool proposition}
In the above notation, let $p_1,\dots,p_h$ be general points in $\p^{N_d}=\p(\Sym^d(W^\ast))$. The linear system $\ls_{N_d}(V_{n,d},2^h)$
has the expected dimension if and only if $\Sec_h(V_{n,d}^k)$ has the expected dimension.

\end{proposition}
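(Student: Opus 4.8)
The plan is to derive the statement as a direct application of Proposition~\ref{E,F}, once the variety and the two skew linear subspaces are chosen correctly. I take $X=V_{N_d}^k=\nu_k(\p^{N_d})\subset \p^{N_d^k}=\p(\Sym^k(\Sym^d(W^\ast)))$, the standard $k$-th Veronese embedding, which is smooth and non-degenerate. For the two skew subspaces I use the ones furnished by the $GL(W)$-decomposition $\Sym^k(\Sym^d(W^\ast))=\Sym^{dk}(W^\ast)\oplus\mathcal{E}$, namely $\mathbb{F}:=\p(\Sym^{dk}(W^\ast))$ and $\mathbb{E}=\p(\mathcal{E})$; these are disjoint and span $\p^{N_d^k}$, as required. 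In the notation of Proposition~\ref{E,F}, the projection $\pi_{\mathbb{E}}\colon \p^{N_d^k}\dashrightarrow \mathbb{F}$ restricts on $X$ to the map $\phi_{dk}$ of the diagram, so $\pi_{\mathbb{E}}(X)=\phi_{dk}(\p^{N_d})=V_{n,d}^k$, and this restriction is an embedding by Lemma~\ref{immersion}, hence generically finite. The complementary projection $\pi_{\mathbb{F}}\colon \p^{N_d^k}\dashrightarrow \mathbb{E}$ is exactly the map $\pi_{dk}$, whose restriction to $X$ is generically finite by Lemma~\ref{finite F}. Thus the hypotheses of Proposition~\ref{E,F} on the two projections are met.

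Next I identify the two projected linear systems. On the $\mathbb{F}$-side, $\pi_{\mathbb{E}}(X)=V_{n,d}^k$, so $\mathcal{L}_{h,\pi_{\mathbb{E}}(X)}=\mathcal{L}_{h,V_{n,d}^k}$ is the system of hyperplane sections of $V_{n,d}^k$ double at $h$ general points; by Terracini's Lemma (Theorem~\ref{th:Terracini}) it has the expected dimension if and only if $\Sec_h(V_{n,d}^k)$ does. On the $\mathbb{E}$-side, hyperplanes of $\mathbb{E}$ pull back to hyperplanes of $\p^{N_d^k}$ containing $\mathbb{F}=\p(\Sym^{dk}(W^\ast))$, which under $\nu_k^\ast$ correspond, by Lemma~\ref{iperpiani E}, precisely to the degree-$k$ hypersurfaces of $\p^{N_d}$ containing $V_{n,d}$. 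Imposing that such a section be singular at the general point $z_i=\pi_{\mathbb{F}}(x_i)$ of $\pi_{\mathbb{F}}(X)$ translates, because $\nu_k$ is an embedding and $\pi_{\mathbb{F}}$ is unramified at the general $x_i$, into a double point at $p_i=\nu_k^{-1}(x_i)\in\p^{N_d}$; concretely, containing $T_{z_i}\pi_{\mathbb{F}}(X)$ corresponds to being singular at $p_i$. Hence $\mathcal{L}_{h,\pi_{\mathbb{F}}(X)}$ is identified with $\ls_{N_d}(V_{n,d},2^h)$, and a dimension count (both systems have section space of dimension $\dim\mathcal{E}$ with $h$ conditions of size $N+1$ each) shows the two expected dimensions coincide.

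It remains to secure the standing hypothesis of Proposition~\ref{E,F}, namely that $\mathcal{L}_{h,X}$ has the expected dimension. Since $X=\nu_k(\p^{N_d})$ is an ordinary Veronese variety, $\nu_k^\ast\mathcal{L}_{h,X}$ is the system of degree-$k$ hypersurfaces of $\p^{N_d}$ double at $h$ general points, whose dimension is governed by the Alexander--Hirschowitz theorem; in the range of interest (outside its finite list of exceptional cases) this system is non-defective, so the hypothesis holds. Proposition~\ref{E,F} then gives that $\ls_{N_d}(V_{n,d},2^h)=\mathcal{L}_{h,\pi_{\mathbb{F}}(X)}$ has the expected dimension if and only if $\mathcal{L}_{h,\pi_{\mathbb{E}}(X)}=\mathcal{L}_{h,V_{n,d}^k}$ does, which by the previous paragraph is equivalent to $\Sec_h(V_{n,d}^k)$ having the expected dimension. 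I expect the main obstacle to be the bookkeeping of the second paragraph: one must check simultaneously that containment of $\mathbb{F}$ corresponds to containment of $V_{n,d}$ and that the double-point conditions transfer faithfully across the generically finite projection $\pi_{\mathbb{F}}$, which is exactly where the generic finiteness of Lemmas~\ref{immersion} and~\ref{finite F} and the isomorphism of Lemma~\ref{iperpiani E} enter.
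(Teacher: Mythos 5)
Your proposal is correct and follows essentially the same route as the paper's own proof: both apply Proposition~\ref{E,F} to $X=V_{N_d}^k$ with the skew subspaces $\mathbb{E}=\p(\mathcal{E})$ and $\mathbb{F}=\p(\Sym^{dk}(W^\ast))$, using Lemma~\ref{immersion} and Lemma~\ref{finite F} for generic finiteness of the two projections, Alexander--Hirschowitz for the non-defectivity of $\mathcal{L}_{h,X}$, and Lemma~\ref{iperpiani E} to identify $\mathcal{L}_{h,\pi_{dk}(X)}$ with $\ls_{N_d}(V_{n,d},2^h)$. The extra care you take in transferring the double-point conditions across $\pi_{\mathbb{F}}$ is a welcome elaboration of a step the paper leaves implicit.
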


\begin{proof}

With the notations of Proposition \ref{E,F} we have $\mathbb{E}=\mathbb{P}(\H^0(\mathcal{I}_{V_{n,d}}(k)))$ and $\mathbb{F}=\Sym^{dk}(W^\ast)$.
We have that $\pi_{\mathbb{E}}$ restricted to $X=V_{N_d}^k$ is an isomorphism by Lemma \ref{immersion}, in particular it is generically finite. The same holds  for $\pi_{\mathbb{F}}=\pi_{dk}$ by Lemma \ref{finite F}. Now $\mathcal{L}_{h,V_{N_d}^k}$ has the expected dimension by Alexander-Hirschowitz (see Theorem \ref{AH-thm} below) and $$\mathcal{L}_{h,\pi_{dk}(V_{N_d}^k)}=|\mathcal{O}_{\p^{N_d}}(k) \otimes \mathcal{I}_{V_{n,d}} \otimes \mathcal{I}_{p_1^2,\dots,p_h^2}|$$ by Lemma \ref{iperpiani E}. 

\end{proof}

 
\section{Degeneration techniques}\label{section-degeneration}

In this section we will discuss two types of degenerations that will provide main tools for the proofs of the results of this article.

\subsection{The $\mathbb{F}\mathbb{P}-$degeneration}\label{degeneration-section}

In this section we recall a degeneration procedure introduced in \cite{Po}, which
consists in degenerating the projective space $\pp^N$ to a reducible variety with two components, and then studying degenerations of  line bundles on the general fiber. 

\subsubsection{Degenerating the ambient space}\label{degeneratingSpace-section}

Let $\Delta$ be a complex disc centred at the origin and consider the product $\mathcal{Y}=\pp^N \times \Delta$ with the natural projections $\pi^\mathcal{Y}_1:\mathcal{Y}\to \PP^N$ and $\pi^\mathcal{Y}_2:\mathcal{Y}\to \Delta$. The second projection  is a flat morphism and we denote by
$Y_t:=\pp^N \times \{t\}$ the fiber over $t\in\Delta$. We will refer to $Y_0$ and to $Y_t$, with $t\neq0$, as the \emph{central fiber} and the \emph{general fiber} respectively.
Let  $f:\mathcal{X}\to \mathcal{Y}$ denote the blow-up of $\mathcal{Y}$ at a point $(p,{0})\in Y_0$ in the central fiber. Consider the following diagram, where    $\pi^\mathcal{X}_i:= \pi^\mathcal{Y}_i\circ f$, for $i=1,2$:

\begin{displaymath}
\xymatrix{ 
\mathcal{X}  \ar[rr]_f \ar[drr]_{\pi^{\mathcal{X}}_2} \ar@/^/[rrrr]^{\pi^{\mathcal{X}}_1}  && \mathcal{Y} \ar[rr]_{\pi^{\mathcal{Y}}_1} \ar[d]^{\pi^{\mathcal{Y}}_2}   && \pp^N \\
&& \Delta }
\end{displaymath}
The morphism $\pi^{\mathcal{X}}_2:\mathcal{X} \rightarrow \Delta$ is flat with fibers denoted by $X_t$, $t \in\Delta$.  For the general fiber we have $X_t\cong Y_t=\pp^N$, while the central  fiber $X_0$ is the reduced union of  the strict transform of $Y_0$, that we shall denote with $\F$,  and the exceptional divisor $\pp\cong\pp^N$ of $f$. 
The two components $\pp$ and $\F$ meet transversally and we will denote  by $R$ the intersection: $R:=\F\cap \PP\cong\PP^{N-1}$.
We will say that $\PP^N$ \emph{degenerates} to $X_0=\pp\cup\F$.

We will now endow the general fiber $X_t$ with a line bundle and we will describe its limits on $X_0$ via this degeneration.
In order to do so, we will give bases for the Picard groups of the components of $X_0$. 

\begin{notation} 
We denote by $H_\pp$ the hyperplane class of $\pp$, so that the Picard group of the exceptional component is generated by $H_\pp$. Moreover we denote with $H_\F$ the hyperplane class of $\F$, pull-back of a general hyperplane of $Y_0\cong\PP^N$, and with $E:=\pp|_{\F}$ the exceptional class in $\F$: $H_\F$ and $E$ generate the Picard group of $\F$.
\end{notation}
In these bases, $R$ has class $H_\pp$ in $\textrm{N}^1(\pp)$ and  $E$ in $\textrm{N}^1(\F)$.
A line bundle  on $X_0$ will correspond to a line bundle on  $\pp$ and a line bundle on $\F$, which match on the intersection $R$. In other terms, we can describe the Picard group of $X_0$ as a fiber product  $$\textrm{Pic}(X_0)=\textrm{Pic}(\pp)\times_{\textrm{Pic}(R)}\textrm{Pic}(\F).$$ 
Consider the line bundle $\mathcal{O}_{\mathcal{X}}(k)=(\pi^\mathcal{X}_1)^{\ast}(\mathcal{O}_{\pp^N}(k))$ and the following twist by a negative multiple of the exceptional divisor: 
\begin{align*}
\mathcal{M}_\mathcal{X}(k,a)&:=\mathcal{O}_{\mathcal{X}}(k)\otimes\mathcal{O}_{\mathcal{X}}(-a\PP).
\intertext{The line bundle $\mathcal{M}_\mathcal{X}(k,a)$ will induce a line bundle on each fiber $X_t$ by restriction: }
\mathcal{M}_{t}(k,a)&:=\mathcal{M}_\mathcal{X}(k,a)|_{X_t}, \  t\in\Delta.
\intertext{For $t\ne 0$, since $\PP\cap X_t=\emptyset$,  we have }
\mathcal{M}_{t}(k,a)&=\mathcal{O}_{X_t}(k)\\
\intertext{while on the components of the central fiber we have}
\mathcal{M}_{\pp}(k,a)&:=\mathcal{M}_\mathcal{X}(k,a)|_{\pp}=\mathcal{O}_{\pp}(aH_{\pp}),\\ 
 \mathcal{M}_{\F}(k,a)&:=\mathcal{M}_\mathcal{X}(k,a)|_{\F}=\mathcal{O}_{\F}(kH_{\F}-aE).\end{align*}
The resulting line bundle on $X_0$ is a flat limit of the bundle $\mathcal{O}_{X_t}(k)\cong\mathcal{O}_{\pp^n}(k)$, for $t\to 0$.


\subsubsection{Degenerating the Veronese variety}\label{degeneratingVeronese-section}
We will use the same notation as in Section \ref{degeneratingSpace-section}.
Let us set $$N:=N_d={{n+d}\choose n}-1,$$ and let $V:=V_{n,d}=v_{d}(\pp^n)\subset\pp^N$ denote the $d$-th Veronese embedding of $\pp^n$ in $\pp^N$, and consider the $1$-parameter family $\mathcal{V} =V\times\Delta \subset \mathcal{Y}$ with the natural projections ${\pi^\mathcal{Y}_1}|_\mathcal{V}:\mathcal{V}\to \PP^N$ and ${\pi^\mathcal{Y}_2}|_\mathcal{V}:\mathcal{V}\to \Delta$. The second projection  is a flat morphism and we denote by
$V_t:=V \times \{t\}$ the fiber over $t\in\Delta$. 
We pick a general point $(p,0)\in V_0\subset Y_0$ in the central fiber of $\pi_2^\mathcal{Y}:\mathcal{Y}\to\Delta$  supported on the Veronese variety and we consider the blow-up $f:\mathcal{X}\to\mathcal{Y}$ at $(p,0)$. This induces the blow-up $f|_\mathcal{V}:\widetilde{\mathcal{V}}\to\mathcal{V}$ of $\mathcal{V}$ at $(p,0)$ and the fibers of 
$(\pi_2^{\mathcal{Y}}\circ f)|_{\mathcal{\widetilde{\mathcal{V}}}}$ are as follows: the general fiber is a Veronese variety 
\begin{align*}\widetilde{V}_t&\cong V,\\
\intertext{while the central fiber is the reduced union of two components,} 
\widetilde{V}_0&=\widetilde{V}_{\F}\cup \Lambda,\\
\intertext{where $\widetilde{V}_{\F}$ is the strict transform of $V_0$ under the blow-up at $p$, while $\Lambda\cong\pp^n$ is the exceptional divisor on $\widetilde{V}$. Moreover we can write} 
\Lambda_R&:=\widetilde{V}_{\F}\cap \Lambda\subset\Lambda,
\end{align*}
 and observe that $\Lambda_R\cong\pp^{n-1}$.

Consider the line bundle $\mathcal{M}_\mathcal{X}(k,a)$ and  twist it by the ideal sheaf of $\widetilde{\mathcal{V}}$:
\begin{align*}
\mathcal{M}_\mathcal{X}(k,a;\widetilde{\mathcal{V}})&:=\mathcal{M}_\mathcal{X}(k,a)\otimes\mathcal{I}_{\widetilde{\mathcal{V}}}=\mathcal{O}_{\mathcal{X}}(k)\otimes\mathcal{O}_{\mathcal{X}}(-a\PP)\otimes\mathcal{I}_{\widetilde{\mathcal{V}}}.\\
\intertext{%
This restricts to the following line bundles  on the fibers $X_t$:} 
\mathcal{M}_{t}(k,a;V)&=\mathcal{O}_{X_t}(k)\otimes\mathcal{I}_{V_t}, \  t\in\Delta\setminus\{0\},\\
\mathcal{M}_{\pp}(k,a;\Lambda)&
=\mathcal{O}_{\pp}(aH_{\pp})\otimes\mathcal{I}_{\Lambda},\\ 
 \mathcal{M}_{\F}(k,a;\widetilde{V})&
 =\mathcal{O}_{\F}(kH_{\F}-aE)\otimes\mathcal{I}_{\widetilde{V}}.\end{align*}
The resulting line bundle on $X_0$ is a flat limit of the bundle $\mathcal{O}_{\pp^n}(k)\otimes\mathcal{I}_{V}$ on the general fiber.


\subsubsection{Degenerating  a collection of points in general position}\label{degeneratingGeneralPoints-section}
We continue to use the notation of Sections \ref{degeneratingSpace-section}-\ref{degeneratingVeronese-section}. 
On the general fiber of $\mathcal{Y}\to\Delta$, i.e. for $t\ne 0$, we consider a collection of points $\{x_{1,t}\dots,x_{h,t}\}\subset Y_t$ in general position and that, in particular, lie off the Veronese variety $V_t\subset Y_t$. After choosing a point that lies generically on the Veronese variety in the central fiber,  $p\in V_0\subset Y_0$,  we degenerate each point $x_{1,t}\in Y_t$ to an infinitely near point to $p\in V_0$ as follows.
For every $i\in\{1,\dots,h\}$, consider  the curve $(\pi^\mathcal{Y}_1)^{-1}(x_i)\subset\mathcal{Y}$ and its pull-back  $C_i$ on $\mathcal{X}$. 
The union $\bigcup_i C_i$ intersects each fiber $X_t$ transversally in $h$ distinct points. For  $t\neq0$, the points $\bigcup_i C_i\cap X_t$ are in general position. Moreover, by the generality assumption, the curves  $(\pi^\mathcal{Y}_1)^{-1}(p_i)\subset\mathcal{Y}$ are not tangent to $V_0\in Y_0$, therefore the intersection points 
$C_i\cap X_0$ lie on $\PP$ but not inside $\Lambda$.


Consider the blow-up of  $\mathcal{X}$  along the union of curves $\bigcup_{i=1}^h C_t$, $g_h:\widetilde{\mathcal{X}}\to \mathcal{X}$, with exceptional divisors $E_{C_i}$. Since these curves are disjoint, the result does not depend of the order of blow-up.
The general fiber, that we continue to call $X_t$ by abuse of notation, is isomorphic to a $\PP^N$ blown-up at $h$ points in general position and its Picard group will be generated by the hyperplane class and by the classes of the exceptional divisors:
$$\Pic(X_t)=\mathbb{Z}\langle H,E_{1,t},\dots,E_{h,t}\rangle.$$
The central fiber  has two components, the pull-back of $\F$ and the strict transform of $\pp\cong\pp^N$, which is isomorphic to a $\PP^N$ blown-up at $h$ points  in general position: abusing notation, we call $\F$ and $\pp$ the two components, so that $X_0={\pp}\cup\F$. The Picard group of $\widetilde{\pp}$ is
$$\Pic({\pp})=\mathbb{Z}\langle H_\pp,E_{1,t},\dots,E_{h,t}\rangle.$$

For a vector ${\bf m}=(m_1,\dots,m_h)\in\mathbb{N}^n$, 
consider the following sheaf on $\widetilde{\mathcal{X}}$:
\begin{align*}
\mathcal{M}_{\widetilde{\mathcal{X}}}(k,a;\tilde{\mathcal{V}},{\bf m})&:=\mathcal{O}(k)\otimes\mathcal{O}(-a\PP)\otimes\mathcal{O}(-(m_1E_{C_1}+\cdots+m_hE_{C_h}))\otimes\mathcal{I}_{\widetilde{V}}.
\intertext{It restricts to}
\mathcal{M}_t(k,a;V,{\bf m})&=
\mathcal{O}(k)\otimes\mathcal{O}(-(m_1E_{1,t}+\cdots+m_hE_{h,t}))\otimes\mathcal{I}_{V_t},\ t\ne 0,\\
\intertext{on the general fiber, and to}
\mathcal{M}_\pp(k,a;\Lambda,{\bf m})&=
\mathcal{O}_{\pp}(aH_{\pp}-(m_1E_{1}+\cdots+m_hE_{h}))\otimes\mathcal{I}_{\Lambda},\\ 
\mathcal{M}_\F(k,a;\widetilde{V},{\bf m})&=
\mathcal{O}_{\F}(kH_{\F}-aE)\otimes\mathcal{I}_{\widetilde{V}}.
\end{align*}
on the components of the central fiber.
The resulting line bundle on $X_0$ is a flat limit of the bundle  on the general fiber. 
%

\subsubsection{Matching conditions}\label{matching conditions}

We will abbreviate the notation of the previous sections by setting 
\begin{align*}
\mathcal{M}_{\widetilde{\mathcal{X}}}&:=\mathcal{M}_{\widetilde{\mathcal{X}}}(k,a;\tilde{\mathcal{V}},{\bf m})
 \intertext{and} 
\mathcal{M}_t&:=\mathcal{M}_t(k,a;V,{\bf m})\\
\mathcal{M}_{\pp}&:=\mathcal{M}_\pp(k,a;\Lambda, {\bf m})\\
\mathcal{M}_{\F}&:=\mathcal{M}_\F(k,a;\tilde{V},{\bf m}).
\end{align*}
We are interested in computing the dimension of the space of global sections of the line bundle on the central fiber which, by semicontinuity, gives an upper bound  to the dimension of the space of global sections of the line bundle on the general fiber:
\begin{equation}\label{semicontinuity}
\h^0(X_0,\mathcal{M}_0)\ge \h^0(X_t,\mathcal{M}_t).
\end{equation}
In order to do so, we consider the natural restrictions to the intersection $R=\pp\cap\F$ of the central fiber:
\begin{align*}
0\to\hat{\mathcal{M}}_\pp&\to {\mathcal{M}}_\pp\to \mathcal{M}_\pp|_R\to0,\\
0\to\hat{\mathcal{M}}_\F&\to {\mathcal{M}}_\F\to \mathcal{M}_\F|_R\to0,
\end{align*}
where $\hat{\mathcal{M}}_\pp=\hat{\mathcal{M}}_\pp(k,a;\Lambda,{\bf m})$ and $\hat{\mathcal{M}}_\F=\hat{\mathcal{M}}_\F(k,a;\tilde{V},{\bf m})$ denote the kernels of the restriction maps. Since $R=H_\pp$ on $\pp$ and $R=E$ on $\F$, we have
\begin{align*}
\hat{\mathcal{M}}_\pp&=\mathcal{O}_{\pp}((a-1)H_{\pp}-(m_1E_{1}+\cdots+m_hE_{h})\otimes\mathcal{I}_{\Lambda}\\\
\hat{\mathcal{M}}_\F&=\mathcal{O}_{\F}(kH_{\F}-(a+1)E)\otimes\mathcal{I}_{\tilde{V}}.
\end{align*}
Consider the restriction maps of global sections: 
\begin{align*}
r_\pp:\H^0(\pp,\mathcal{M}_\pp)&\to \H^0(R,\mathcal{M}_\pp|_R),\\
r_\F:\H^0(\pp,\mathcal{M}_\F)&\to \H^0(R,\mathcal{M}_\F|_R).
\end{align*}
We notice that the spaces of global sections of the restricted systems are both subspaces of the space of global sections of the degree-$a$ line bundle on $R\cong\pp^{N-1}$:
$$
\H^0(R,\mathcal{M}_\pp|_R), \H^0(R,\mathcal{M}_\F|_R)\subseteq \H^0(R,\mathcal{O}_R(a)).
$$
A global sections of  $\mathcal{M}_0$ consists of an element of $\H^0(\pp,\mathcal{M}_\pp)$ and an element of $\H^0(\F,\mathcal{M}_\F)$ which match in $\H^0(R,\mathcal{O}_R(a))$, i.e. the space of global sections   $H^0(X_0,\mathcal{M}_0)$ is described as a fiber product via
the following commutative diagram:
$$
\begin{tikzcd}
  \H^0(X_0,\mathcal{M}_0) \arrow[r] \arrow[d]
    & \H^0(\pp,\mathcal{M}_{\pp}) \arrow[d,"r_\pp"] \\
  \H^0(\F,\mathcal{M}_{\F}) \arrow[r,"r_\F"]
&  \H^0(R,\mathcal{M}_\pp|_R\cap \mathcal{M}_\F|_R)
\end{tikzcd}
$$
This yields the formula for the dimension of the spaces of global sections
$$
\h^0(X_0,\mathcal{M}_{0})=\h^0(\pp,\hat{\mathcal{M}}_{\pp})+
\h^0(\F,\hat{\mathcal{M}}_{\F})+\h^0(R,\mathcal{M}_\pp|_R\cap \mathcal{M}_\F|_R),
$$ which, in terms of dimensions of line bundles, reads as follows:
\begin{equation}\label{H0 central fiber}
\dim\mathcal{M}_{0}=\dim\hat{\mathcal{M}}_{\pp}+\dim\hat{\mathcal{M}}_{\F}+\dim \mathcal{M}_\pp|_R\cap \mathcal{M}_\F|_R+2.
\end{equation}

\begin{remark}\label{abuse-notation-ls}
There is an obvious isomorpshism between  $\mathcal{M}_t(k,a;V,{\bf m})$, line bundle  on $X_t$, and the line bundle $\ls_{N,k}(V,{\bf{m}}):=\mathcal{O}_{\PP^N}(k)\otimes\mathcal{I}_{V}\otimes\mathcal{I}_Z$ on $\PP^N$, where $Z$ is a union of fat points in general position in $\PP^N$  with multiplicities respectively $m_1,\dots,m_h$. Such isomorphism is  given by taking strict transforms of elements of $\ls_{N,k}(V,{\bf{m}})$. 

Similarly, since $\PP$ is the blow-up of $\PP^N$ at $h$ points in general position and $\Lambda\subset\PP$ is a general linear subspace, then there is an isomorphisms $\mathcal{M}_\pp(k,a;\Lambda, {\bf m})\cong \ls_{N,a}(\Lambda,{\bf m}):=\mathcal{O}_{\PP^N}(a)\otimes\mathcal{I}_{\Lambda}\otimes\mathcal{I}_Z$.

Finally, since $\F$ is a $\PP^N$ blown-up at a point $p$ on the Veronese variety $V\subset \PP^N$ and since $\tilde{V}$ is the strict transform of $V$ via this blow-up, then
there is an isomorphisms $\mathcal{M}_\F(k,a;\tilde{V},{\bf m})\cong \ls_{N,k}(V,a):=\mathcal{O}_{\PP^N}(k)\otimes\mathcal{I}_{V}\otimes\mathcal{I}_{p^a}$.

\end{remark}


\subsection{Toric degeneration of the Veronese variety}\label{toric-section}

We refer to \cite{fulton-toric-book} for details on projective toric varieties associated with convex lattice polytopes and to \cite{GKZ} for details on coherent triangulations.
Let $\Delta_n\subset\mathbb{R}^n$  be the $n$-dimensional simplex obtained as the convex hull of the points $(0,\dots,0), (1,0,\dots,0),\dots,(0,\dots,0,1)$. Consider $d\Delta_n=\Delta_N+\cdots+\Delta_N$, where $+$ denotes the Minkowski sum of polytopes in $\mathbb{R}^n$.
The polytope $d\Delta_n$ defines the $d$-th Veronese embedding of $\PP^n$ in $\PP^N$, with $N=N_d={{N+d}\choose N}-1$, that we shall call  $V=V_{n,d}$ as in the previous section.
Consider the lattice $\mathbb{Z}^n\subset\mathbb{R}^n$ and the set of lattice points $\mathcal{A}:=d\Delta_n\cap \mathbb{Z}^n$. We have $\sharp\mathcal{A}=N+1$ and each such point corresponds to a coordinate point of the ambient space $\PP^N$.
\subsubsection{Degenerating the Veronese to a union of linear spaces}
Take a \emph{regular triangulation} of  $d\Delta_n$, that is a decomposition of $d\Delta_n$ into  a finite union of simplices
$$\bigcup_{i=1}^{d^n}S_i,$$
where 
\begin{itemize}
\item each $S_i$ is obtained as the convex hull of $n+1$ non-aligned points of $\mathcal{A}$,
\item $\sharp S_i\cap\mathbb{Z}^n=n+1$,
\item for $i\ne j$, $S_i\cap S_j$ is a common faces of $S_i$ and $S_j$ (possibly empty),
\item there is a strictly convex piecewise linear function $\lambda: \mathbb{R}^n\to\mathbb{R}$ whose domains of linearity are precisely the $S_i$'s.
\end{itemize}
We can always assume that $S_1$ is the convex hull of the lattice points $(0,\dots,0)$, $(1,0,\dots,0),\dots,(0,\dots,0,1)$, so that it lies at a corner of $d\Delta_n$.
Consider for example, for $n=2$ and $d=3$, the subdivision into nine triangles and the piecewise linear function inducing it, as shown in Figure \ref{regular subd}. 
\begin{figure}[h!]
\setlength{\unitlength}{0.5mm}
\begin{center}
\begin{picture}(45,50)(0,-10)
\put(0,0){\line(1,0){45}}
\put(0,15){\line(1,0){30}}
\put(0,30){\line(1,0){15}}
\put(0,0){\line(0,1){45}}
\put(15,0){\line(0,1){30}}
\put(30,0){\line(0,1){15}}
\put(0,45){\line(1,-1){45}}
\put(0,30){\line(1,-1){30}}
\put(0,15){\line(1,-1){15}}
\end{picture} \quad\quad 
\includegraphics[width=11em]{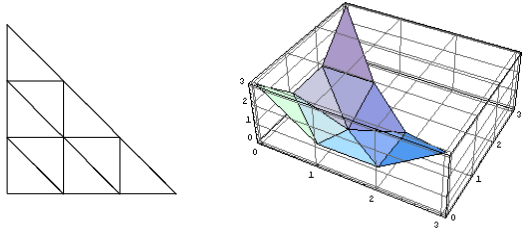}
\end{center}
\caption{A regular triangulation  of $3\Delta_2$.}\label{regular subd}
\end{figure}
In this figure, $S_1$ is the triangle with vertices $(0,0),(1,0),(0,1)$.

Each $S_i$ defines a $\PP^n$ as a toric variety, which we will call $\Pi_i$, for $i=1,\dots,d^n$.
Since a regular triangulation of $d\Delta_n$ induces a 1-parameter embedded degeneration of $V_{n,d}\subset \PP^N$ to the union of toric varieties described by the $S_i$'s (see \cite{CDM} and \cite{Pos-SecDeg} for details on the 2-dimensional case), we have a degeneration of the $n$-dimensional Veronese variety $V_{n,d}$ to a union of $d^n$ $n$-planes:
$$
{\bf\Pi}:=\bigcup_{i=1}^{d^n}\Pi_i.
$$
The intersection table of these planes is encoded in the combinatorial data described by the triangulation, that is: if $S_i\cap S_j$ is $r$-dimensional, then $\Pi_i\cap\Pi_j\cong \PP^r$, for $0\le r\le n-1$. \begin{remark}\label{sink}
Because of the choice of $S_1$ made, we will say that $\Pi_1$ is a \emph{sink}. In practice this means that it is possible to choose a hyperplane of $\PP^N$ that contains every $S_i$, $i>1$, but that does not contain $S_1$.
\end{remark}
Moreover, the union of planes ${\bf\Pi}\subset\p^N$ is a torus invariant subscheme.
In fact, consider the simplex $\Delta_N$, which defines $\PP^N$ as a toric variety, with an action of the algebraic torus $(\mathbb{C}^\ast)^N$. Each $r$-dimensional face of $\Delta_N$ corresponds to a torus invariant linear subspace of dimension $r$ of $\PP^N$. In particular vertices of $\Delta_N$ are in one-to-one correspondence with $N+1$ linearly general points of $\PP^N$, which we may assume to be the coordinate points, after  a suitable change of coordinates. Each $r$-dimensional face of $\Delta_N$ corresponds to a $\PP^r$ spanned by $r+1$ coordinate points. 
Since each $\Pi_i$ is the linear span of $n+1$ coordinate points of $\PP^N$, then the union ${\bf\Pi}$ is embedded in a copy of $\PP^N$ and it is invariant under the action of the torus $(\mathbb{C}^\ast)^N$. 
In particular, each $\Pi_i$ will correspond to a \emph{marked} $n$-dimensional face of $\Delta_N$ and we have $d^n$ such marked faces.

\subsubsection{Degenerating a linear system intepolating the Veronese}

We now consider the linear systems on $\PP^N$ of degree$-k$ hypersurfaces containing the Veronese variety on the one hand, and the union of $n-$planes ${\bf\Pi}$ on the other hand:
\begin{align*}
\ls_{N,k}(V_{n,d})&:=\mathcal{O}_{\PP^N}(k)\otimes\mathcal{I}_{V_{n,d}},\\
\ls_{N,k}({\bf\Pi})&:=\mathcal{O}_{\PP^N}(k)\otimes\mathcal{I}_{\bf\Pi}
\end{align*}
\begin{lemma}\label{deg-linsis-Vero-Pi}
In the above notation, we have 
 $$\dim\ls_{N,k}(V_{n,d})\le \dim\ls_{N,k}({\bf\Pi}).$$
\end{lemma}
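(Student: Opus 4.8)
The plan is to realize the asserted inequality as an instance of upper semicontinuity of cohomology along the flat toric degeneration attached to the regular triangulation, followed by a passage to the reduction of the flat limit.

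First I would promote the regular triangulation of $d\Delta_n$ to an honest flat family. As recalled in \cite{CDM} and \cite{Pos-SecDeg}, and in the general combinatorial language of \cite{GKZ}, a regular (coherent) triangulation of the lattice polytope $d\Delta_n$ determines a weight vector, hence a one-parameter Gr\"obner-type degeneration of the homogeneous ideal $\mathcal{I}_{V_{n,d}}$. Concretely this produces a family $\mathcal{Z}\subset\PP^N\times\Delta$, flat over $\Delta$, whose general fibre $\mathcal{Z}_t$ (for $t\neq0$) is the Veronese variety $V_{n,d}$ and whose central fibre $\mathcal{Z}_0$ is a subscheme of $\PP^N$ whose reduction $(\mathcal{Z}_0)_{\mathrm{red}}$ is precisely the union of coordinate $n$-planes ${\bf\Pi}=\bigcup_{i=1}^{d^n}\Pi_i$ cut out by the maximal simplices $S_i$ of the triangulation.

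Second, I would invoke upper semicontinuity. For the sheaf $\mathcal{O}_{\PP^N}(k)\otimes\mathcal{I}_{\mathcal{Z}_t}$ the function $t\mapsto \h^0(\PP^N,\mathcal{O}_{\PP^N}(k)\otimes\mathcal{I}_{\mathcal{Z}_t})$ is upper semicontinuous on $\Delta$, so specialising to $t=0$ from the general fibre $V_{n,d}$ gives
\[
\h^0\bigl(\mathcal{O}_{\PP^N}(k)\otimes\mathcal{I}_{V_{n,d}}\bigr)\le \h^0\bigl(\mathcal{O}_{\PP^N}(k)\otimes\mathcal{I}_{\mathcal{Z}_0}\bigr).
\]
Then I would pass from the possibly non-reduced limit $\mathcal{Z}_0$ to its reduction: the closed immersion ${\bf\Pi}=(\mathcal{Z}_0)_{\mathrm{red}}\hookrightarrow\mathcal{Z}_0$ yields an inclusion of ideal sheaves $\mathcal{I}_{\mathcal{Z}_0}\subseteq\mathcal{I}_{\bf\Pi}$, and hence
\[
\h^0\bigl(\mathcal{O}_{\PP^N}(k)\otimes\mathcal{I}_{\mathcal{Z}_0}\bigr)\le \h^0\bigl(\mathcal{O}_{\PP^N}(k)\otimes\mathcal{I}_{\bf\Pi}\bigr).
\]
Chaining the two displayed inequalities and subtracting $1$ to pass from spaces of sections to projective dimensions of linear systems gives $\dim\ls_{N,k}(V_{n,d})\le\dim\ls_{N,k}({\bf\Pi})$, as claimed.

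The only place where genuine input is needed, and hence the main obstacle, is the first step: verifying that the regular triangulation really does produce a family that is \emph{flat} over $\Delta$ with the stated general and central fibres. This is exactly the content of the embedded toric degeneration constructed in \cite{CDM,Pos-SecDeg}, where it is worked out in the surface case; the same coherent-triangulation mechanism applies in arbitrary dimension. It is worth emphasising that flatness in fact forces the Hilbert function to be preserved, so that the first displayed inequality is an equality; the reason the lemma is stated as an inequality rather than an equality is precisely that the flat limit $\mathcal{Z}_0$ need not be reduced, so that its reduction ${\bf\Pi}$ satisfies only $\mathcal{I}_{\mathcal{Z}_0}\subseteq\mathcal{I}_{\bf\Pi}$ and may impose strictly fewer conditions on degree-$k$ hypersurfaces.
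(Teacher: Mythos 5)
Your argument is essentially the paper's own proof: the paper also realizes ${\bf\Pi}$ as a flat degeneration of $V_{n,d}$ induced by the regular triangulation (citing \cite{CDM} and \cite{Pos-SecDeg}) and concludes by semicontinuity of $\dim$. The only difference is that you add the careful intermediate step of passing from the (possibly non-reduced) flat limit to its reduction via the inclusion of ideal sheaves, which the paper elides by treating ${\bf\Pi}$ itself as the flat limit; this is a harmless and in fact clarifying refinement of the same approach.
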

\begin{proof}
Since ${\bf\Pi}$ is a flat degeneration of $V_{n,d}$, then the statement follows by semi-continuity of the function $\dim$.
\end{proof}
 

 \section{Some auxiliary linear systems}\label{section-linearsystems}
 
 \subsection{Hypersurfaces containing a linear subspace and $h$ double points in general position}

It is a well celebrated result of Alexander and Hirschowitz that if we impose $h$ double points in general position to the hypersurfaces of degree $d$ of $\PP^N$, there is only a finite list of cases where the dimension is larger than that obtained via a parameter count, that is
$$\edim\ls_{N,d}(2^h)=\max\left\{-1,{{N+d}\choose N}-h(N+1)-1\right\}\le\dim\ls_{N,d}(2^h),$$
where the expression on the left hand side is the \emph{expected dimension} of the linear system.
\begin{theorem}[Alexander-Hirschowitz Theorem]\label{AH-thm}
The linear system $\ls_{N,d}(2^h)$ has the expected dimension, except in the following cases:
\begin{itemize}
\item $d=2$ and $N\ge 2$, $2\le h\le N$;
\item $d=3$ and $(N,h)=(4,7)$;
\item $d=4$ and $(N,h)=(2,5),(3,9),(4,14)$.
\end{itemize}
\end{theorem}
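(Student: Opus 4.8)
The plan is to prove the statement by a double induction on the dimension $N$ and the degree $d$, realising the classical Horace method of Alexander and Hirschowitz through the embedded degeneration of $\PP^N$ described in Section \ref{degeneration-section}. Since $h$ double points impose at most $h(N+1)$ conditions, the parameter count always gives $\dim\ls_{N,d}(2^h)\ge\edim\ls_{N,d}(2^h)$; the whole content of the theorem is therefore the reverse inequality, namely that a general collection of double points imposes \emph{independent} conditions outside the listed cases. To produce such an upper bound I would degenerate $\PP^N$ to the reducible central fibre $X_0=\pp\cup\F$ and specialise $j$ of the $h$ double points onto the exceptional component $\pp$ while keeping the remaining $h-j$ generic, exactly as in Section \ref{degeneratingGeneralPoints-section} (here without the twist by $\mathcal{I}_{\widetilde V}$, since there is no Veronese to interpolate). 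By the semicontinuity inequality \eqref{semicontinuity} it then suffices to bound the dimension of the limit system on $X_0$ by the expected value, because $\edim\le\dim(\text{general fibre})\le\dim(X_0)$ forces equality once the last term is controlled.

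By the fibre-product description of the central fibre, formula \eqref{H0 central fiber}, this limit system decomposes into a piece supported on $\pp$, a piece supported on $\F$, and a matching term along $R=\pp\cap\F\cong\PP^{N-1}$. This is the geometric incarnation of the Castelnuovo restriction sequence $0\to\ls_{N,d-1}\to\ls_{N,d}\to\ls_{N-1,d}\to0$ attached to a fixed hyperplane $H\cong\PP^{N-1}$: the trace system lives in the same degree $d$ but in dimension $N-1$, while the residual system lives in degree $d-1$ in dimension $N$, so that each reduced system falls under the inductive hypothesis. The resulting estimate $\dim\ls_{N,d}(2^h)\le\dim(\text{trace})+\dim(\text{residual})$ must then be matched against the expected value by an optimal choice of how many of the $h$ double points are placed on $H$ and how many are kept generic.

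The difficulty is that the length $N+1$ of a double point does not in general split evenly between its trace, a double point of length $N$ in $R$, and its residual, a simple point of length $1$; consequently for certain residue classes of $h(N+1)$ no integer split of the points makes the trace and residual systems simultaneously attain their expected dimensions, and the naive induction overshoots by a bounded amount precisely at the boundary of the expected range. The remedy, and the technical heart of the argument, is the \emph{differential Horace method}: one specialises some double points to $R$ with a prescribed tangent direction, replacing a double point $2p$ by a flat limit whose trace is a full double point in $R$ and whose residual is empty (or conversely), thereby transferring one unit of condition between the two reduced systems. Showing that these degenerate zero-dimensional schemes form a flat family with the asserted trace and residual, so that the semicontinuity bound remains valid, is the step I expect to be the main obstacle.

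Finally I would anchor the induction on a finite list of base cases and thereby isolate the genuine exceptions. The case $d=1$ is trivial, and $d=2$ is treated directly: a quadric singular at $h\le N$ general points is forced to be a cone over the linear span of its singular locus, which accounts exactly for the defective range $2\le h\le N$. The sporadic exceptions in degrees $3$ and $4$, namely $(N,h)=(4,7)$ and $(N,h)=(2,5),(3,9),(4,14)$, reflect classical special geometry (for instance the $(2,5)$ case corresponds to the Veronese surface) and are verified individually, either by running the same degeneration to completion or by an explicit dimension computation. Checking that no further exceptions survive, i.e. that the chosen specialisations achieve the expected counts for every remaining triple $(N,d,h)$, is the bookkeeping that completes the proof.
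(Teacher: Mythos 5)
The paper does not prove Theorem \ref{AH-thm} at all: it is quoted as the classical Alexander--Hirschowitz theorem, with the proof delegated to the references \cite{A}, \cite{AH1}, \cite{AH2}, \cite{AH3}, \cite{AH4}, to the simplifications in \cite{BO} and \cite{Chandler}, and to the degeneration-based proof in \cite{PosPhD,Po}. So there is no in-paper argument to compare yours against; the only meaningful comparison is with the cited literature.

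Measured against that literature, your outline identifies the right strategy --- indeed it is essentially the approach of \cite{Po}, which recasts the Horace method via the two-component degeneration $X_0=\pp\cup\F$ that this paper reuses in Section \ref{degeneration-section} --- and you correctly locate the two genuine difficulties: the arithmetic mismatch between the trace and residual counts, and the need for the differential Horace trick to fix it. But as written this is a roadmap, not a proof. The flatness of the ``differential'' limit schemes (the step you yourself flag as the main obstacle) is the content of a nontrivial lemma, and the ``bookkeeping'' you defer at the end is not minor: it is the bulk of the original several-hundred-page argument, including a delicate choice of specializations for each residue class of $h(N+1)$ modulo the trace length, the verification that the degenerate configurations remain general enough for the inductive hypothesis to apply, and a separate direct analysis of the cubic and quartic base cases where the sporadic exceptions $(N,h)=(4,7)$ and $(2,5),(3,9),(4,14)$ arise. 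None of these steps is likely to fail, since they are all carried out in the cited papers, but none of them is actually done in your proposal; if the goal were to supply a self-contained proof of Theorem \ref{AH-thm}, the proposal as it stands has not yet begun the hard part.
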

The interested reader may see \cite{A},\cite{AH1},\cite{AH2},\cite{AH3},\cite{AH4}] for the original proof based on specialisation of points (Horace method), and \cite{BO} and \cite{Chandler} for a simplified proof. An alternative proof via a different degeneration construction can be found here \cite{PosPhD,Po}. This inspired the degeneration approach developed in Section \ref{degeneration-section} that will be used to prove the main result, Theorem \ref{main-thm1}.

In this section we want to present an analogous result about linear systems with $h$ imposed double points in general position and a linear subspace.
Let $\Lambda\subset\pp^N$ be a general linear subspace of dimension $n$ 
and let $Z\subset\PP^N$ be a double point scheme with support a set of points in general position. Let $ \mathcal{I}_\Lambda$ be the ideal sheaf of $\Lambda\subset\pp^N$ and let
$\mathcal{I}_{Z}$ be the ideal sheaf of $Z\subset\PP^N$. Consider
 the sheaf
$$\ls_{N,k}(\Lambda,2^h):= \mathcal{O}_{\pp^N}(k)\otimes \mathcal{I}_\Lambda\otimes\mathcal{I}_{Z}.$$
Since the Hilbert polynomial of $\Lambda\subset\pp^N$ at degree $k$ is 
$${{n+k}\choose n}$$ 
 or, in other terms, 
\begin{equation}\label{Hilb-Lambda}
\h^0(\mathcal{O}_{\pp^N}(k)\otimes\mathcal{I}_\Lambda)={{N+k}\choose N}-{{n+k}\choose n},
\end{equation}
and since the scheme given by $h$ double points in general position of $\PP^N$ has length $h(N+1)$,
 we can give the following definitions.
 \begin{definition}\label{exp dim Lambda and general double points}
The \emph{virtual  dimension} of the linear system $\ls_{N,k,\Lambda}(2^h)$ of hypersurfaces of $\PP^n$ that vanish along a linear subspace of dimension $n$, $\Lambda\subset\PP^N$, and double at $h$ points in general position is the following integer:
 $$
\vdim \ls_{N,k}(\Lambda,2^h)={{N+k}\choose N}-{{n+k}\choose n}- h(N+1)-1.
$$
The \emph{expected dimension} of $\ls_{N,k}(\Lambda,2^h)$ is
 $$
\edim \ls_{N,k}(\Lambda,2^h)=\max\left\{-1, \vdim \ls_{N,k}(\Lambda,2^h)\right\}.
$$
\end{definition}


Since $\Lambda$ and the scheme of double points are disjoint, the virtual dimension provides a lower bound to the actual dimension:
\begin{equation}\label{virt-lowerbound-V-double}
\edim \ls_{N,k}(\Lambda,2^h)\le \dim \ls_{N,k}(\Lambda,2^h).
\end{equation}

\begin{proposition}\label{dim-doublepoints-on-Lambda}
Let $\Lambda\subset\pp^N$ be a linear subspace of dimension $n$ and let $Z_{\Lambda}\subset\PP^N$ be a double point scheme supported on a collection of points in general position in $\PP^N$. Then if 
\begin{equation}\label{second bound on h}
h\le \frac{1}{N+1}{{N+k-1}\choose N},
\end{equation}
and $(N,k-1,h)$ is not in the list of exceptions of Theorem \ref{AH-thm}, and $k\ge2$, then 
\begin{equation}\label{dim Lambda and points}
\dim \ls_{N,k}(\Lambda,2^h)= \edim \ls_{N,k}(\Lambda,2^h).
\end{equation}
\end{proposition}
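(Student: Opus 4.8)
The plan is to reduce the statement to the Alexander--Hirschowitz Theorem \ref{AH-thm} at degree $k-1$ by means of a single Castelnuovo (Horace) exact sequence. Write $\mathcal{Z}=\Lambda\cup Z_\Lambda$ for the union of the linear subspace and the double point scheme, and choose a general hyperplane $K\cong\pp^{N-1}$ containing $\Lambda$, with defining linear form $\ell$. By generality the support of $Z_\Lambda$ lies off $K$ and off $\Lambda$. First I would compute the trace and residual of $\mathcal{Z}$ with respect to $K$: since $\Lambda\subset K$ one has $\operatorname{Res}_K(\Lambda)=\emptyset$ and $\operatorname{Tr}_K(\Lambda)=\Lambda$, while the double points, being disjoint from $K$, satisfy $\operatorname{Res}_K(Z_\Lambda)=Z_\Lambda$ and $\operatorname{Tr}_K(Z_\Lambda)=\emptyset$. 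Hence multiplication by $\ell$ yields the exact sequence of ideal sheaves
\begin{equation*}
0\to\mathcal{I}_{Z_\Lambda,\pp^N}(k-1)\to\mathcal{I}_{\mathcal{Z},\pp^N}(k)\to\mathcal{I}_{\Lambda,K}(k)\to0,
\end{equation*}
in which the kernel is exactly the Alexander--Hirschowitz system $\ls_{N,k-1}(2^h)$ and the quotient is the system of degree$-k$ hypersurfaces of $K\cong\pp^{N-1}$ containing $\Lambda$.

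Next I would control the two outer terms. The quotient is a complete linear system of forms on $\pp^{N-1}$ modulo those vanishing on a linear $\pp^n$, so by the computation \eqref{Hilb-Lambda} applied in $\pp^{N-1}$ its space of sections has dimension $\binom{N-1+k}{N-1}-\binom{n+k}{n}$. For the kernel I would invoke the hypotheses: since $(N,k-1,h)$ is not in the exceptional list and $k\ge2$ (so that $k-1\ge1$ is an admissible degree), the system $\ls_{N,k-1}(2^h)$ is non-special, and the bound $h\le\frac{1}{N+1}\binom{N+k-1}{N}$ guarantees $\binom{N+k-1}{N}-h(N+1)\ge0$. Together these give $\h^0(\mathcal{I}_{Z_\Lambda}(k-1))=\binom{N+k-1}{N}-h(N+1)$, and comparing this with the Euler characteristic $\chi(\mathcal{I}_{Z_\Lambda}(k-1))=\binom{N+k-1}{N}-h(N+1)$ forces $\h^1(\mathcal{I}_{Z_\Lambda}(k-1))=0$.

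Because this $\h^1$ vanishes, the long exact sequence in cohomology splits the middle term additively:
\begin{equation*}
\h^0(\mathcal{I}_{\mathcal{Z}}(k))=\left(\binom{N+k-1}{N}-h(N+1)\right)+\left(\binom{N-1+k}{N-1}-\binom{n+k}{n}\right).
\end{equation*}
Applying Pascal's rule $\binom{N+k-1}{N}+\binom{N+k-1}{N-1}=\binom{N+k}{N}$ collapses this to $\binom{N+k}{N}-\binom{n+k}{n}-h(N+1)$, which is precisely $\vdim\ls_{N,k}(\Lambda,2^h)+1$. Hence $\dim\ls_{N,k}(\Lambda,2^h)=\vdim$, and since the lower bound \eqref{virt-lowerbound-V-double} already places this dimension in the expected range, we obtain \eqref{dim Lambda and points}.

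The main obstacle, and the only place the numerical hypotheses genuinely enter, is the vanishing $\h^1(\mathcal{I}_{Z_\Lambda}(k-1))=0$: this is where I use both that $(N,k-1,h)$ avoids the Alexander--Hirschowitz exceptions (so the degree$-(k-1)$ double point system is non-special) and that the bound on $h$ keeps its virtual dimension non-negative, so that non-speciality actually forces the higher cohomology to vanish rather than merely controlling $\h^0$. A minor point to verify is that $\Lambda$ is a proper subspace, i.e. $n\le N-1$, so that $\operatorname{Tr}_K(\Lambda)=\Lambda$ is a genuine linear subspace of $K$ and the quotient term is non-negative; this holds since $\Lambda\cong\pp^n$ is embedded as a proper subspace of $\pp^N$.
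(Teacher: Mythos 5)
Your argument is correct for the statement exactly as written, and in its main case it is essentially the paper's own proof: the same Castelnuovo sequence obtained by restricting to a general hyperplane containing $\Lambda$, with kernel $\ls_{N,k-1}(2^h)$ controlled by Theorem \ref{AH-thm} and quotient computed via \eqref{Hilb-Lambda}. The only cosmetic difference is that you deduce surjectivity of the restriction map from $\h^1(\mathcal{I}_{Z_\Lambda}(k-1))=0$ and conclude by additivity, whereas the paper settles for the upper bound $\h^0(\ls_{N,k}(\Lambda,2^h))\le\h^0(\ls_{N,k-1}(2^h))+\h^0(\ls_{N-1,k}(\Lambda))$ and then invokes the lower bound \eqref{virt-lowerbound-V-double}; your uniform treatment also absorbs the paper's separate ad hoc discussion of $k=2$, $h\in\{0,1\}$.

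The substantive difference is one of coverage. The paper's proof does not confine itself to the literal hypotheses: most of its length is devoted to $k=3$ with $2\le h\le\frac{N}{2}+1$, where the triple $(N,2,h)$ \emph{is} an Alexander--Hirschowitz exception, the kernel $\ls_{N,2}(2^h)$ is special of dimension $\binom{N+2}{2}-h(N+1)+\binom{h}{2}-1$, and your additivity argument collapses because $\h^1$ of the kernel does not vanish. There the paper identifies the base locus of $\ls_{N,3}(\Lambda,2^h)$ (the lines spanned by pairs of the $h$ points), shows the restricted system is the complete system $\ls_{N-1,3}(\Lambda,1^{\binom{h}{2}})$, proves the latter non-special, and checks that the speciality of the kernel cancels exactly against the $\binom{h}{2}$ extra trace points. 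This case is not decorative: when Proposition \ref{dim-doublepoints-on-Lambda} is invoked in the proof of Theorem \ref{main-thm-linearsystems} (for instance with original degree $5$, applied to $\hat{\ls}_\pp=\ls_{N,3}(\Lambda,2^h)$ with $h$ as large as $\frac{N+2}{2}$), the excluded exceptional triples genuinely occur. So your proof establishes the proposition as stated, but it cannot serve as a drop-in replacement for the paper's proof without adding the special-kernel analysis for $k=3$.
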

\begin{proof}
%

If $k=2$ and $h=0$, the conclusion follows from \eqref{Hilb-Lambda}. If $k=2$ and $h=1$, it is easy to see that all elements of $\LL_{N,2}(\Lambda, 2)$ are pointed quadric cones containing $\Lambda$ and hence we have the isomorphism $\LL_{N,2}(\Lambda, 2)\cong\LL_{N-1,2}(\Lambda)$. By \eqref{Hilb-Lambda}, we have that $\dim\LL_{N-1,2}(\Lambda)={{N+1}\choose2}-{{n+2}\choose2}$. We conclude noticing that the latter equals the expected dimension of $\LL_{N,2}(\Lambda, 2)$.
 
Now, assume $k\ge 4$ and consider the following exact sequence obtained by restricting $\ls_{N,k}(\Lambda,2^h)$ to a general hyperplane $H\subset\pp^N$ such that $\Lambda\subseteq H$:
\begin{equation}\label{Castelnuovo-sequence}
0\to  \ls_{N,k-1}(2^h)\to \ls_{N,k}(\Lambda,2^h)\to \ls_{N,k}(\Lambda,2^h)|_H\subseteq \ls_{N-1,k}(\Lambda).
\end{equation}
Under the assumption  \eqref{second bound on h} and using Theorem \ref{AH-thm}, the kernel system $ \ls_{N,k-1}(2^h)$ has dimension equal to its virtual dimension:
$$
\dim\ls_{N,k-1}(2^h)={{N+k-1}\choose N}-h(N+1)-1,
$$
and in particular $H^1(\PP^N,\dim\ls_{N,k-1}(2^h))=0$, so that we have the following exact sequence in cohomology:
$$
0\to  \H^0(\PP^N,\ls_{N,k-1}(2^h))\to H^0(\PP^N,\ls_{N,k}(\Lambda,2^h))\to \H^0(H, \ls_{N,k}(\Lambda,2^h)|_H)\to0.
$$
Moreover by \eqref{Hilb-Lambda}
$$
\h^0(\PP^{N-1},\ls_{N-1,k}(\Lambda))={{N-1+k}\choose N-1}-{{n+k}\choose n},
$$ 
and so 
$$
\h^0(H,\ls_{N,k}(\Lambda,2^h)|_H)\le{{N-1+k}\choose N-1}-{{n+k}\choose n}.
$$
From the exact sequence of global sections we obtain:
\begin{align*}
\h^0(\PP^N,\ls_{N,k}(\Lambda,2^h))&=\h^0(\PP^N,\ls_{N,k-1}(2^h))+\h^0(H, \ls_{N,k}(\Lambda,2^h)|_H)\\
& \le {{N+k}\choose N}-{{n+k}\choose n}- h(N+1).
\end{align*}
We conclude the proof of this case using \eqref{virt-lowerbound-V-double}.

Finally, assume that $k=3$. In this case, the bound on the number of points is $h\le \frac{N}{2}+1$. We consider the restriction to a general hyperplane containing $\Lambda$ as in \eqref{Castelnuovo-sequence}. The kernel system is special by Theorem \ref{AH-thm}, and one can easily check that it has dimension 
$$
\dim\ls_{N,2}(2^h)={{N+2}\choose2}-h(N+1)+{h\choose2}-1,
$$
see for instance \cite[Section 1.2.1]{PosPhD}. Moreover, as a simple consequence of B\'ezout's Theorem, the linear system $\ls_{N,3}(\Lambda,2^h)$ contains in its base locus the lines spanned by pairs of points, each of which intersects  $H$ in a point. We claim that the base locus of $\ls_{N,3}(\Lambda,2^h)$ is supported on the union of $\Lambda$ and these lines. This implies that the restricted system is the complete linear system of cubics containing $\Lambda$ and passing simply through the ${h\choose2}$ trace points:
$$
\ls_{N,3}(\Lambda,2^h)|_H= \ls_{N-1,3}(\Lambda,1^{{h\choose2}}).
$$
We claim that the linear system on the right hand side of the above expression is non-special, namely that the scheme given by $\Lambda$ and the simple points impose independent conditions to the cubics of $\PP^{N-1}$. 
This shows that 
\begin{align*}
\dim \ls_{N,3}(\Lambda,2^h)&=\dim\ls_{N,2}(2^h)+\dim \ls_{N-1,3}(\Lambda,1^{{h\choose2}})+1\\
&=\left({{N+2}\choose2}-h(N+1)+{h\choose2}-1\right)\\ &\quad +\left({{N+2}\choose3}-{{n+3}\choose3}-{h\choose2}-1\right)+1,
\end{align*}
which implies that $\ls_{N,3}(\Lambda,2^h)$ is non-special.

We are left to proving the two claims. 
For the second claim, first of all notice that there is a hyperplane inside $H$, containing all ${h\choose2}$ points. This can be taken to be the intersection with $H$ of a hyperplane of $\PP^N$ containing the $h$ original points. Call $H_1\subset H$ the intersection and restrict the linear system $\ls_{N-1,3}(\Lambda,1^{{h\choose2}})$ to it, giving rise to the following exact sequence:
$$
0\to  \ls_{N-1,2}(\Lambda)\to \ls_{N-1,3}(\Lambda,1^{{h\choose2}})\to \ls_{N-2,3}(1^{{h\choose2}}).
$$
Both external linear systems are non-special (for the restricted one see for instance \cite[Proposition 3.12]{Po}),  then so is the middle one, concluding the proof of the claim.

As for the first claim: we show that $\ls_{N,3}(\Lambda,2^h)$ has no additional base locus other than $\Lambda$ and the lines spanned by pairs of points. Let's call $p_1,\dots,p_h$ the $h$ assigned pints in general position. Assume that $q$ is a point in $\PP^N$ in linearly general position with respect to $p_1,\dots,p_h$. Since $h\le \frac{N}{2}+1<N$, there is a hyperplane $A$ containing $p_1,\dots,p_h$ but not containing $q$. Since $n<N$, there is a hyperplane $B$ containing $\Lambda$ but not containing $q$. The cubic $2A+B$ belongs to $\ls_{N,3}(\Lambda,2^h)$ proving that $q$ cannot be a base point. Assume now that $q$ is a point in $\PP^N$ not in linearly general position with respect to $p_1,\dots,p_h$, which means that there is a linear space spanned by some of the $p_i$'s containing $q$, but such that $q$ does not belong to any of the lines $\langle p_i,p_j\rangle$. Let $\langle p_i:i\in I_q\rangle$ be the minimum such linear span and choose two distinct indices $j_1,j_2\in I_q$. Let $A_1$ be a hyperplane containing all $p_i$'s with $i\ne j_1$ and let $A_2$ be a hyperplane containing all $p_i$'s with $i\ne j_2$.  Let $B$ be a hyperplane containing $\Lambda$, $p_{j_1}$ and $p_{j_2}$ and not containing $q$. The cubic $A_1+A_2+B$ belongs to $\ls_{N,3}(\Lambda,2^h)$ proving that $q$ cannot be a base point.
Finally, since the multiplicity of the general element of the linear system of cubics along  the line $q\in \langle p_i:i\in I_q\rangle$ is exactly $1$, then the above cases are exhaustive and this conclude the proof of the claim.
\end{proof}

\subsection{Hypersurfaces containing the Veronese variety and a fat point}

Let $V=V_{n,d}\subset\pp^n$ be the $d$-th Veronese embedding of $\pp^n$
and let $\{p^a\}\subset V\subset\PP^n$ be a fat point scheme with support on $V$. Let $ \mathcal{I}_V$ be the ideal sheaf of $V\subset\pp^N$ and let
$\mathcal{I}_{p^a}$ be the ideal sheaf of $Z\subset\PP^N$. Consider
 the sheaf
$$\ls_{N,k}(V,a):= \mathcal{O}_{\pp^N}(k)\otimes \mathcal{I}_V\otimes\mathcal{I}_{p^a}.$$
We are interested in computing the dimension of the space of global sections. 
The Hilbert polynomial of $V\subset\pp^N$ at degree $k$ is 
$${{n+kd}\choose n}$$ 
 or, equivalently,  we have 
$$
\dim\mathcal{O}_{\pp^N}(k)\otimes\mathcal{I}_V={{N+k}\choose N}-{{n+kd}\choose n}-1.
$$
The scheme given by a point of multiplicity $a$ of $\PP^N$ imposes 
\begin{equation}\label{fat-point-Hilb}
{{N+a-1}\choose N}
\end{equation} 
conditions to the hypersurfaces of $\PP^N$ of degree $k$.
Therefore the \emph{virtual dimension} of $\ls_{N,k}(V,a)$, obtained by a parameter count, is
$${{N+k}\choose N}-{{n+kd}\choose n}-{{N+a-1}\choose N}-1.$$
It does not yield a satisfactory notion of expected dimension for the linear system $\ls_{N,k}(V,a)$, due to the fact that the two subschemes $V$ and $\{p^a\}$ of $\PP^N$ have nonempty intersection so that some of the conditions imposed by them individually to the hypersurfaces of degree $k$ of $\PP^N$ will overlap. For instance, if we first impose $V$ and then $\{p\}$, clearly the latter will not give any independent condition, because, by the containment relation $p\in V$, $p$ is a base point of the linear system $\ls_{N,k}(V)=\mathcal{O}_{\pp^N}(k)\otimes \mathcal{I}_V$.
When the support of a fat point subscheme $Z=\{p^a\}\subset \PP^N$, whose length is given in \eqref{fat-point-Hilb}, lies on the $n$-dimensional subvariety $V$, the restriction $Z|_V\subset V$ is a subscheme of length $${{n+a-1}\choose n}.$$ 
Therefore we may define the following notion of expected dimension.

\subsubsection{A notion of expected dimension}
We introduce the following refined parameter count.
\begin{definition}\label{exp dim V and fat point}
Let $V\subset\pp^N$ be the $d$-th Veronese embedding of $\pp^N$ and let $\{p^a\}\subset\PP^N$ be a fat point scheme supported on $V$. The \emph{expected dimension} of $\ls_{N,k}(V, a)$, denoted by $\edim \ls_{N,k}(V,a)$, is the following integer:
$$
\max\left\{-1,{{N+k}\choose N}-{{n+kd}\choose n}- \left[{{N+a-1}\choose N}-{{n+a-1}\choose n}\right]-1\right\}.
$$
\end{definition}
That the integer of Definition \ref{exp dim V and fat point} is a lower bound to the actual dimension of $\ls_{N,k}(V,a)$ is not an obvious statement. We will show that it does  when $a\le k$.

\begin{proposition}\label{expdim-lowerbound}
Let $\nu_d: \pp^n\to\pp^N$ be the $d$-the Veronese embedding. Let $V=V_{n,d}:=\nu_d(\pp^n)\subset\pp^N$ and let $Z_{V}=\{p^a\}\subset\PP^n$ be an ordinary fat point of multiplicity $a\le k$  supported on $V$. Then
\begin{equation}\label{lower bound to dim V and points}
\dim \ls_{N,k}(V,a)\ge \edim \ls_{N,k}(V,a).
\end{equation}
\end{proposition}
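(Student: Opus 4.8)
The plan is to realize $\H^0(\ls_{N,k}(V,a))$ as the kernel of an evaluation map on the larger system $\ls_{N,k}(V)=\mathcal{O}_{\PP^N}(k)\otimes\mathcal{I}_V$ and to bound the rank of that map from above. The guiding principle behind Definition \ref{exp dim V and fat point} is that a degree-$k$ form vanishing on $V$ is already partially singular at a point $p\in V$, so the fat point $\{p^a\}$ can impose at most $\binom{N+a-1}{N}-\binom{n+a-1}{n}$ genuinely new conditions; the virtual dimension is exactly $\h^0(\ls_{N,k}(V))-1$ minus this number. By projective normality of $V$, recalled in Section \ref{preliminaries}, the source has the expected size, namely $\h^0(\ls_{N,k}(V))=\binom{N+k}{N}-\binom{n+kd}{n}$.

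First I would introduce the length-$\binom{N+a-1}{N}$ Artinian scheme $\{p^a\}\subset\PP^N$, with structure sheaf $T:=\mathcal{O}_{\PP^N}/\mathcal{I}_{p^a}$, and the evaluation map
$$\rho\colon \H^0(\mathcal{O}_{\PP^N}(k)\otimes\mathcal{I}_V)\longrightarrow \H^0(T\otimes\mathcal{O}_{\PP^N}(k))$$
sending a form $F$ to its $(a-1)$-jet at $p$. By construction $\ker\rho=\H^0(\ls_{N,k}(V,a))$, since a degree-$k$ form lies in this kernel precisely when it vanishes on $V$ and is singular of order $a$ at $p$. Hence
$$\h^0(\ls_{N,k}(V,a))=\h^0(\ls_{N,k}(V))-\rk\rho,$$
and it remains to bound $\rk\rho$ from above.

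The key step is to observe that $\operatorname{Im}\rho$ lies in the kernel of the natural restriction $T\otimes\mathcal{O}(k)\to T_V\otimes\mathcal{O}(k)$ to the trace $T_V:=\mathcal{O}_V/(\mathcal{I}_{p^a}\mathcal{O}_V)$ of the fat point on $V$: indeed, every $F$ in the source vanishes identically on $V$, so its jet at $p$ restricts to the zero jet of $F|_V$. Since $V$ is smooth of dimension $n$ at $p$ (the Veronese is smooth) and $\mathcal{O}(k)$ is locally trivial, the restriction of jets is surjective and $T_V$ is the fat point of multiplicity $a$ on the $n$-fold $V$, of length $\binom{n+a-1}{n}$. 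Therefore the kernel has length $\binom{N+a-1}{N}-\binom{n+a-1}{n}$, whence $\rk\rho\le\binom{N+a-1}{N}-\binom{n+a-1}{n}$. Combining this with the displayed identity and subtracting $1$ to pass to projective dimensions yields $\dim\ls_{N,k}(V,a)\ge\vdim\ls_{N,k}(V,a)$; since the dimension is always at least $-1$, we conclude $\dim\ls_{N,k}(V,a)\ge\edim\ls_{N,k}(V,a)$.

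The main obstacle is the key step itself: making precise that the jet of a section vanishing on $V$ automatically kills the $V$-directions, i.e. that $\operatorname{Im}\rho\subseteq\ker(T\otimes\mathcal{O}(k)\to T_V\otimes\mathcal{O}(k))$, together with the identification of the length of this kernel as $\binom{N+a-1}{N}-\binom{n+a-1}{n}$. This is exactly the overlap between the conditions imposed by $V$ and those imposed by $\{p^a\}$ that motivates the refined count of Definition \ref{exp dim V and fat point}; the cleanest justification proceeds in formal local coordinates at $p$ in which $V$ is a coordinate $n$-plane, where the restriction of $(a-1)$-jets becomes the evident surjection setting the normal coordinates to zero, with kernel of the stated length. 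I note that this direction does not really use the hypothesis $a\le k$, which should become essential only for the reverse (upper bound) inequality established later by degeneration.
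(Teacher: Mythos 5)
Your argument is correct, but it runs the Castelnuovo-type computation in the opposite order from the paper. The paper first imposes only the fat point, forming $\ls_{N,k}(a)=\mathcal{O}_{\PP^N}(k)\otimes\mathcal{I}_{p^a}$, and then restricts to $V$ via the exact sequence $0\to\ls_{N,k}(V,a)\to\ls_{N,k}(a)\to\ls_{N,k}(a)|_V$; the hypothesis $a\le k$ is used there to get $\h^1(\PP^N,\ls_{N,k}(a))=0$ (so that global sections behave), and the trace on $V$ is bounded above by pulling it back under $\nu_d$ into $\ls_{n,kd}(a)$ on $\PP^n$, of dimension ${{n+kd}\choose n}-{{n+a-1}\choose n}-1$. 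You instead impose $V$ first, using projective normality to compute $\h^0(\ls_{N,k}(V))={{N+k}\choose N}-{{n+kd}\choose n}$, and then restrict to the fat point via the jet-evaluation map $\rho$, bounding $\rk\rho$ by the length of $\ker\bigl(T\otimes\mathcal{O}(k)\to T_V\otimes\mathcal{O}(k)\bigr)$, namely ${{N+a-1}\choose N}-{{n+a-1}\choose n}$; this identification is legitimate because $V$ is smooth at $p$, so $\mathcal{I}_{p,\PP^N}^{a}\mathcal{O}_V=\mathcal{I}_{p,V}^{a}$ and $T\to T_V$ is a surjection of Artinian rings of the stated lengths. Both routes yield exactly the virtual dimension as a lower bound. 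What your version buys is, as you correctly observe, independence from the hypothesis $a\le k$, which enters the paper's proof only to kill $\h^1$ of the fat-point system; what the paper's version buys is that everything stays at the level of linear systems on $\PP^N$ and $\PP^n$, with no need for the formal local-coordinate analysis of relative jets along $V$.
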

\begin{proof}
We consider the linear system $\ls_{N,k}(a)=\mathcal{O}_{\PP^N}\otimes\mathcal{I}_{Z_V}$ of the degree-$k$ hypersurfaces of $\PP^N$ with a point of multiplicity $a$ with support on $V$. Restriction to $V$ gives the following Castelnuovo sequence: 
$$
0\to \ls_{N,k}(V,a)\to \ls_{N,k}(a)\to \ls_{N,k}(a)|_V.
$$
It is an easy observation that a fat point of multiplicity $a$  imposes independent conditions to the hypersurfaces of fixed degree  of $\PP^N$, as long as the multiplicity does not exceed the degree. Therefore we can obtain the dimension of the linear system $\ls_{N,k}(a)$ by a parameter count:
$$
\dim\ls_{N,k,V}(a)={{N+k}\choose N}-{{N+a-1}\choose N}-1
 $$
In particular $\h^1(\pp^N,\ls_{N,k}(a))=0$, so that we have the following sequence in cohomology:
\begin{align*}
0\to \H^0(\ls_{N,k}(V,a))\to \H^0(\ls_{N,k}(a))\to \H^0(\ls_{N,k}(a)|_V) 
\to \H^1(\ls_{N,k}(V,a))\to 0.
\end{align*}
Since the Veronese morphism $\nu_d:\pp^n\to \PP^N$ gives an isomorphism of $\PP^n$ to its image $V$, then the pull-back  of $\ls_{N,k}(a)|_V$ 
 is a linear system of degree-$kd$ hypersurfaces of $\pp^n$: $$
\nu_d^\ast (\ls_{N,k}(a)|_V)\subseteq \mathcal{O}_{\pp^n}(kd)\otimes\mathcal{I}_{Z'}=:\ls_{n,kd}(a)
$$
where $Z'\subset\pp^n$ is a fat point of multiplicity $a$ with support a general point of $V$.
Since $nk\ge a$ by the assumption, the linear system $\ls_{n,kd}(a)$  has dimension
$$
\dim\ls_{n,kd}(a)={{n+kd}\choose n}-{{n+a-1}\choose n}-1.
$$
From this we obtain 
$$
\dim\ls_{N,k}(a)|_V\le{{n+kd}\choose n}-{{n+a-1}\choose n}-1.
$$
Putting everything together gives the following inequalities
\begin{align*}
\h^0(\ls_{N,k}(V,a))&=\h^0(\ls_{N,k}(a))- \h^0(\ls_{N,k}(a)|_V)+\h^1(\ls_{N,k}(V,a))\\
&\ge\h^0(\ls_{N,k}(a))- \h^0(\ls_{N,k}(a)|_V)\\
&\ge \left({{N+k}\choose N}-{{N+a-1}\choose N}\right)- \left({{n+kd}\choose n}-{{n+a-1}\choose n}\right),
\end{align*}
which conclude the proof.
\end{proof}


\subsubsection{Dimensionality via apolarity and toric geometry}
Let $V=V_{n,d}\subset\PP^N$ be the Veronese variety and  let ${\bf\Pi}\subset\PP^N$ be a union of $n$-planes, degeneration of $V$, as in Section \ref{toric-section}. Let $p\in V$ and $p_0\in\Pi_1\subset{\bf\Pi}$ be general points.
Consider the linear systems on $\PP^N$
\begin{align*}
\ls_{N,k}(V,a)&:=\mathcal{O}_{\PP^N}(k)\otimes\mathcal{I}_V\otimes\mathcal{I}_{p^a},\\
\ls_{N,k}({\bf\Pi},a)&:=\mathcal{O}_{\PP^N}(k)\otimes\mathcal{I}_v\otimes\mathcal{I}_{p_0^a}.
\end{align*}
Building on  Lemma \ref{deg-linsis-Vero-Pi}, we obtain the following result.
\begin{proposition}
In the above notation, we have 
 $$\dim\ls_{N,k}(V,a)\le \dim\ls_{N,k}({\bf\Pi},a).$$
\end{proposition}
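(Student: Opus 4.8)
The plan is to reproduce, at the level of the pair $(V,\{p^a\})$, the degeneration argument used for Lemma \ref{deg-linsis-Vero-Pi}: I want to exhibit a flat family over a disc $\Delta$ whose general fibre is $V\cup\{p^a\}$ and whose special fibre \emph{contains} ${\bf\Pi}\cup\{p_0^a\}$, and then conclude by upper semicontinuity of $\h^0$. The only new ingredient with respect to Lemma \ref{deg-linsis-Vero-Pi} is that I must drag the fat point along the toric degeneration and make sure it lands on a general point of the sink component $\Pi_1$.

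Concretely, I would start from the flat family $\mathcal{V}\subset\PP^N\times\Delta$ of Section \ref{toric-section}, with general fibre $\mathcal{V}_t\cong V$ and flat limit $\mathcal{V}_0={\bf\Pi}$. Fix a general point $p_0\in\Pi_1$; since $\Pi_1$ is an irreducible component of $\mathcal{V}_0$ of the same dimension $n$ as $V$ and $\mathcal{V}$ is irreducible, there is an arc in $\mathcal{V}$ through $p_0$ dominating $\Delta$, i.e. (after a harmless finite base change, which does not affect any $\h^0$) a section $\sigma\colon\Delta\to\mathcal{V}$ with $\sigma(0)=p_0$ and $\sigma(t)=:p_t\in\mathcal{V}_t$ for $t\neq0$. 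Let $\Sigma^a\subset\PP^N\times\Delta$ be the $a$-th infinitesimal neighbourhood of $\sigma(\Delta)$, a flat family of fat points with $(\Sigma^a)_t=\{p_t^a\}$ and special fibre $\{p_0^a\}$. I would then define $\mathcal{Z}$ to be the scheme-theoretic closure of $\bigcup_{t\neq0}\bigl(\mathcal{V}_t\cup\{p_t^a\}\bigr)$ in $\PP^N\times\Delta$; being the closure of its generic fibre it is $t$-torsion-free over $\Delta$, hence automatically flat, with general fibre $V\cup\{p^a\}$.

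Next I would identify the special fibre. Since $\mathcal{Z}\supseteq\mathcal{V}$ and $\mathcal{Z}\supseteq\Sigma^a$, the surjections $\mathcal{O}_{\mathcal{Z}}\twoheadrightarrow\mathcal{O}_{\mathcal{V}}$ and $\mathcal{O}_{\mathcal{Z}}\twoheadrightarrow\mathcal{O}_{\Sigma^a}$ remain surjective after the right-exact restriction $-\otimes_{\mathcal{O}_\Delta}k(0)$; hence $\mathcal{Z}_0\supseteq\mathcal{V}_0={\bf\Pi}$ and $\mathcal{Z}_0\supseteq\{p_0^a\}$, so $\mathcal{Z}_0\supseteq{\bf\Pi}\cup\{p_0^a\}$. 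Feeding this into semicontinuity along the two families yields
$$
\h^0\bigl(\mathcal{I}_{{\bf\Pi}\cup\{p_0^a\}}(k)\bigr)\ \ge\ \h^0\bigl(\mathcal{I}_{\mathcal{Z}_0}(k)\bigr)\ \ge\ \h^0\bigl(\mathcal{I}_{\mathcal{Z}_t}(k)\bigr)=\h^0\bigl(\mathcal{I}_{V\cup\{p_t^a\}}(k)\bigr),\qquad t\neq0.
$$
Because $\dim\ls_{N,k}(V,a)$ is computed at a \emph{general} point of $V$, it is the minimum of $\h^0(\mathcal{I}_{V\cup\{p^a\}}(k))-1$ over $p\in V$, so the right-hand side is $\ge\dim\ls_{N,k}(V,a)+1$; and since $p_0$ was chosen general on $\Pi_1$, the left-hand side equals $\dim\ls_{N,k}({\bf\Pi},a)+1$. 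Combining gives the desired $\dim\ls_{N,k}(V,a)\le\dim\ls_{N,k}({\bf\Pi},a)$.

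I expect the main subtlety to be the scheme-theoretic control of the flat limit $\mathcal{Z}_0$: a priori it could acquire embedded or thickened components beyond ${\bf\Pi}\cup\{p_0^a\}$. The argument above is arranged so that only the \emph{containment} $\mathcal{Z}_0\supseteq{\bf\Pi}\cup\{p_0^a\}$ is ever used, never an equality, and this containment is obtained for free from $\mathcal{Z}\supseteq\mathcal{V}$ and $\mathcal{Z}\supseteq\Sigma^a$. Note also that the directions of the inequalities conspire so that only $p_0$ must be general (on $\Pi_1$), whereas the transported points $p_t$ may be arbitrary; this is precisely what makes a single section, landing on the sink $\Pi_1$ (Remark \ref{sink}), sufficient to run the comparison.
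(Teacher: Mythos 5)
Your argument is correct and is essentially the paper's own proof: the paper likewise degenerates the pair $V\cup\{p^a\}$ flatly to ${\bf\Pi}\cup\{p_0^a\}$, with $p$ general on $V$ specializing to a general point $p_0$ of the sink component $\Pi_1$, and concludes by semicontinuity of $\h^0$. You have merely made explicit the construction of the section carrying the fat point, the flatness (torsion-freeness) of the total family, and the fact that only the containment $\mathcal{Z}_0\supseteq{\bf\Pi}\cup\{p_0^a\}$ is needed --- details the paper's two-line proof leaves implicit.
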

\begin{proof}
Since $p$ is a general point on $V$, we may assume that it degenerates to a general point $p_0\in S_1$.
Since ${\bf\Pi}\cup\{p_0^a\}$ is a flat degeneration of the scheme $V\cup\{p^a\}$, then the Hilbert function of the former is at most that of latter, by semi-continuity. This concludes the proof.
\end{proof}


\begin{proposition}\label{proposition-toric}
In the above notation and for any $1\le a\le k$, then
$$\dim\ls_{N,k}({\bf\Pi},a)={{N+k}\choose N}-{{n+kd}\choose n}-{{N+a-1}\choose N}+{{n+a-1}\choose n}-1.$$
\end{proposition}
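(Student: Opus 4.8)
The plan is to split the computation into two independent parts and recombine them at the end: first I would determine the dimension of the purely toric system $\ls_{N,k}({\bf\Pi})$ (no fat point), and then show that imposing $\{p_0^a\}$ at a general $p_0\in\Pi_1$ drops the dimension by exactly $\binom{N+a-1}{N}-\binom{n+a-1}{n}$. The two facts combine to the statement, since
$$
\binom{N+k}{N}-\binom{n+kd}{n}-1-\left[\binom{N+a-1}{N}-\binom{n+a-1}{n}\right]
$$
is precisely the claimed value.

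For the first part I would exploit that ${\bf\Pi}$ is torus-invariant, so $\mathcal{I}_{\bf\Pi}$ is a monomial ideal and $\H^0(\ls_{N,k}({\bf\Pi}))$ has a monomial basis. Writing a degree-$k$ monomial in the coordinates $\{x_v\}_{v\in\mathcal{A}}$ as $x^\alpha$ with $\sum_v\alpha_v=k$, one checks that $x^\alpha$ vanishes on $\Pi_i$ if and only if $\operatorname{supp}(\alpha)\not\subseteq\operatorname{vert}(S_i)$, so that $x^\alpha\notin\mathcal{I}_{\bf\Pi}$ if and only if $\operatorname{supp}(\alpha)\subseteq\operatorname{vert}(S_i)$ for some $i$. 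I would then show that the weight map $\alpha\mapsto\sum_v\alpha_v\,v$ restricts to a \emph{bijection} between these ``monomials supported on a single simplex'' and the lattice points of $kd\Delta_n$. Here one uses that the $d^n$ empty lattice simplices $S_i$ triangulate $d\Delta_n$, whose normalized volume is $d^n$, forcing each $S_i$ to be unimodular; surjectivity then follows because $w/k$ lies in some $S_i$ and unimodularity makes its barycentric coefficients $k\lambda_j$ non-negative integers, while injectivity follows from the simplicial-complex structure (the minimal face containing $w/k$ together with uniqueness of barycentric coordinates pins down $\alpha$). Hence the number of monomials outside $\mathcal{I}_{\bf\Pi}$ is $|kd\Delta_n\cap\mathbb{Z}^n|=\binom{n+kd}{n}$, giving $\dim\ls_{N,k}({\bf\Pi})=\binom{N+k}{N}-\binom{n+kd}{n}-1$. (Alternatively, only surjectivity of the weight map is needed, combined with the semicontinuity inequality of Lemma \ref{deg-linsis-Vero-Pi} and the known value of $\dim\ls_{N,k}(V)$.)

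For the second part, consider the jet-evaluation map $\rho\colon\H^0(\ls_{N,k}({\bf\Pi}))\to J^{a-1}_{p_0}$ onto the space of $(a-1)$-jets at $p_0$, whose kernel is exactly $\H^0(\ls_{N,k}({\bf\Pi},a))$; it suffices to prove $\rk\rho=\binom{N+a-1}{N}-\binom{n+a-1}{n}$. Splitting affine coordinates at $p_0$ into the $n$ directions $y$ tangent to $\Pi_1$ and the $N-n$ transverse directions $z$, every $F\in\ls_{N,k}({\bf\Pi})$ vanishes on $\Pi_1$, so its $(a-1)$-jet contains no pure-$y$ monomial; thus $\operatorname{im}\rho$ lies in the subspace $T$ of jets divisible by some $z_i$, and $\dim T=\binom{N+a-1}{N}-\binom{n+a-1}{n}$. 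This yields the lower bound on $\dim\ls_{N,k}({\bf\Pi},a)$. For surjectivity onto $T$ I would use the sink: by Remark \ref{sink} there is a coordinate $x_{v_0}$ vanishing on all $\Pi_i$ with $i>1$ but not on $\Pi_1$, so $x_{v_0}\cdot(\mathcal{I}_{\Pi_1})_{k-1}\subseteq\H^0(\ls_{N,k}({\bf\Pi}))$. Since $(\mathcal{I}_{\Pi_1})_{k-1}$ is spanned by forms $z_iG_i$ with $\deg G_i=k-2\ge a-2$ — this is where the hypothesis $a\le k$ enters — its jets realize every $z_i\cdot u$ with $u$ of order $\le a-2$, hence all of $T$; and as $x_{v_0}(p_0)\ne0$, multiplication by the unit $x_{v_0}$ is an automorphism of $J^{a-1}_{p_0}$ preserving $T$, so $\rho$ is onto $T$. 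Therefore $\rk\rho=\dim T$ and the formula follows.

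The main obstacle is the surjectivity in the second part: the containment $\operatorname{im}\rho\subseteq T$ is formal, but producing enough sections of the highly singular system $\ls_{N,k}({\bf\Pi})$ whose jets span all of $T$ genuinely needs both the sink reduction and the degree inequality $a\le k$. I also expect the unimodularity input in the first part (which upgrades the weight map from a surjection to a bijection) and the compatibility of multiplication by $x_{v_0}$ with the tangential/transverse splitting of jets to be the points most in need of care. The degenerate cases $a=1$ (and $k=1$) are immediate, since there $T=0$.
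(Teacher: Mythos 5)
Your proof is correct, but it reaches the formula by a genuinely different mechanism than the paper, which works entirely on the apolar side. After placing the fat point at the coordinate point of $\Pi_1$ corresponding to the corner vertex and using the sink to arrange that one coordinate vanishes on every $\Pi_i$ with $i\ge 2$ but not on $\Pi_1$, the paper computes the inverse systems $[\mathcal{I}_{p_0^a}^{-1}]_k$ and $[\mathcal{I}_{\Pi_i}^{-1}]_k$ explicitly as monomial subspaces, observes that the first meets $[\mathcal{I}_{\Pi_i}^{-1}]_k$ trivially for $i\ge2$ (every element of $[\mathcal{I}_{p_0^a}^{-1}]_k$ is divisible by $y_0$, which is absent from the others) while it meets $[\mathcal{I}_{\Pi_1}^{-1}]_k$ in a space of dimension ${{n+a-1}\choose n}$ by the hockey stick identity, and concludes by inclusion-exclusion on the spans of these condition spaces. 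Your two steps are the primal counterparts: the identification of $\operatorname{im}\rho$ with $T$ is dual to the paper's computation of the overlap between the conditions imposed by $\{p_0^a\}$ and by ${\bf\Pi}$, and both arguments invoke the sink and the hypothesis $a\le k$ at the analogous spot (you via $\deg G_i=k-2\ge a-2$, the paper via the exponents $k-l$ with $l\le a-1$). What your write-up adds is an actual justification of $\dim\ls_{N,k}({\bf\Pi})={{N+k}\choose N}-{{n+kd}\choose n}-1$, equivalently that the span of the $[\mathcal{I}_{\Pi_i}^{-1}]_k$ has dimension ${{n+kd}\choose n}$: the paper asserts this without comment, whereas your volume count forcing each $S_i$ to be unimodular and the resulting weight-map bijection (or the shortcut via surjectivity plus Lemma \ref{deg-linsis-Vero-Pi}) supply the missing combinatorics. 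What the paper's apolarity computation buys is brevity: all span and intersection dimensions are read off from monomial supports, with no need for the jet filtration or for constructing explicit sections with prescribed jets, which is indeed the most delicate point of your route.
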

\begin{proof}
Given the union ${\bf\Pi}:=\bigcup_{i=1}^{d^n}\Pi_i$ of torus invariant $n$-planes of $\PP^n$, with $\Pi_1$ a sink and  $p_0$ supported generically on $\Pi_1$,  there is a torus invariant hyperplane $H$ such that $\Pi_1\cap H$ is an $(n-1)$-plane and $\Pi_i\subset H$ for $2\le i\le d^n$ (cf. Remark \ref{sink}).
We can always assume that $p_0$ is a coordinate point of $\PP^N$ and we can call
$p_1,\dots,p_N$ the other coordinate (torus invariant) points of $\PP^N$. Hence we can choose, without loss of generality, that $\Pi_1=\langle p_0,\dots, p_n\rangle$ and $H=\langle p_1,\dots,p_N\rangle$, so that $p_0\in\Pi_1$ and $p_i\notin\Pi_i$ for $i\ge 2$.

Let $R=\mathbb{C}[x_0,\dots,x_N]$ be the homogeneous polynomial ring of $\PP^N$ and consider the ideals $\mathcal{I}_{p_0}\subset \mathbb{C}[x_0,\dots,x_N]$ and $\mathcal{I}_{\Pi_i}\subset \mathbb{C}[x_0,\dots,x_N]$ 
\begin{align*}
\mathcal{I}_{p_0}&=\langle x_1,\dots, n_N\rangle,\\
\mathcal{I}_{\Pi_1}&=\langle x_{n+1},\dots, n_N\rangle,\\
\mathcal{I}_{\Pi_i}&=\langle x_{i_{n+1}},\dots, n_{i_N}\rangle,\ i\ge 2,\\
\mathcal{I}_{H}&=\langle x_0\rangle.
\end{align*}
By construction,  for $i\ge 2$ , we have $0\in \{{i_{n+1}},\dots, {i_N}\}$.
Using Notation \ref{apolarity-notation}, we compute:
\begin{align*}
\left[\mathcal{I}_{p_0^a}^{-1}\right]_k&=\{y_0^{k-l}F_l(y_1,\dots,y_N): F_l\in S_l, 0\le l\le a-1\},\\
\left[\mathcal{I}_{\Pi_1}^{-1}\right]_k&=\{F_k(y_0,\dots,y_n): F_k\in S_k\},\\
\left[\mathcal{I}_{\Pi_i}^{-1}\right]_k&=\{F_k(y_{i_0},\dots,y_{i_n}): F_k\in S_k\}, i\ge 2,
 \end{align*}
 where for $i\ge 2$, the index set $\{i_0,\dots,i_n\}$ is the complement of $\{i_{n+1},\dots, {i_N}\}\subset\{0,\dots,N\}$.
We have the following intersections 
\begin{align*}
\left[\mathcal{I}_{p_0^a}^{-1}\right]_k\cap \left[\mathcal{I}_{\Pi_i}^{-1}\right]_k&=\emptyset, i\ge 2,\\
\left[\mathcal{I}_{p_0^a}^{-1}\right]_k\cap\left[\mathcal{I}_{\Pi_1}^{-1}\right]_k&=\{y_0^{k-l}F_l(y_1,\dots,y_n): F_l\in S_l\}.
 \end{align*}
 We compute the dimension of the latter intersection:
 \begin{align*}
 \dim \left[\mathcal{I}_{p_0^a}^{-1}\right]_k\cap\left[\mathcal{I}_{\Pi_1}^{-1}\right]_k&=\dim \{y_0^{k-l}F_l(y_1,\dots,y_n): F_l\in S_l, 0\le l\le a-1\}\\
 &=\sum_{l=0}^{a-1}\dim \{F_l(y_1,\dots,y_n): F_l\in S_l\}\\
 &=\sum_{l=0}^{a-1}{{n-1+l}\choose{n-1}}\\
 &={{n+a-1}\choose{n}},
 \end{align*}
 where the last equality follows a standard relation of Newton coefficients,  commonly known as the \emph{hockey stick identity}.
 
 The number of conditions imposed to the linear system of degree-$k$ hypersurfaces of $\PP^N$ by the scheme $\{p_0^a\}\cup{\bf\Pi}$ is  the dimension of the linear span of $\left[\mathcal{I}_{p_0^a}^{-1}\right]_k$ and $\left[\mathcal{I}_{\Pi_i}^{-1}\right]_k$, for $i=1\dots,d^n$, which is the following integer:
\begin{align*}
&\quad \dim \left[\mathcal{I}_{p_0^a}^{-1}\right]_k+\dim\left\langle \left[\mathcal{I}_{\Pi_i}^{-1}\right]_k, i=1\dots,d^n\right\rangle-\dim \left[\mathcal{I}_{q_0^a}^{-1}\right]_k\cap\left[\mathcal{I}_{\Pi_1}^{-1}\right]_k\\
&= {{N+a-1}\choose{N}}+{{n+kd}\choose{n}}-{{n+a-1}\choose{n}}.
\end{align*}
 
\end{proof}

\begin{corollary}\label{corollary-toric}
The linear system $\dim\ls_{N,k}(V,a)$ has the expected dimension according to Definition \ref{exp dim V and fat point}.
\end{corollary}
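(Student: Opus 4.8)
The plan is to combine the two preceding results into a single dimension count. The key observation is that Corollary \ref{corollary-toric} is essentially a two-sided squeeze: Proposition \ref{expdim-lowerbound} gives the inequality
$$
\dim \ls_{N,k}(V,a)\ge \edim \ls_{N,k}(V,a),
$$
valid whenever $a\le k$, while the chain of semicontinuity estimates culminating in Proposition \ref{proposition-toric} gives the matching upper bound. First I would invoke the unnamed proposition immediately preceding \ref{proposition-toric}, which states $\dim\ls_{N,k}(V,a)\le \dim\ls_{N,k}({\bf\Pi},a)$, obtained by degenerating the Veronese $V$ together with its fat point $\{p^a\}$ to the union of planes ${\bf\Pi}$ together with $\{p_0^a\}$ and applying semicontinuity of the Hilbert function.

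Next I would feed in the explicit computation of Proposition \ref{proposition-toric}, which evaluates the right-hand side exactly:
$$
\dim\ls_{N,k}({\bf\Pi},a)={{N+k}\choose N}-{{n+kd}\choose n}-{{N+a-1}\choose N}+{{n+a-1}\choose n}-1.
$$
The crucial point is that this integer is precisely the virtual dimension appearing inside the maximum in Definition \ref{exp dim V and fat point}. Thus the upper bound reads $\dim\ls_{N,k}(V,a)\le \edim \ls_{N,k}(V,a)$ (after noting that when the virtual dimension is negative the toric computation and the actual dimension are both governed by the same degeneracy, so the expected value $-1$ is attained). Combining this with the reverse inequality from Proposition \ref{expdim-lowerbound} forces equality, which is exactly the assertion of the corollary.

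The main obstacle, such as it is, is not any new computation but rather bookkeeping: one must check that the range of validity lines up. Proposition \ref{expdim-lowerbound} requires $1\le a\le k$, and Proposition \ref{proposition-toric} is stated for the same range $1\le a\le k$, so the two hypotheses are compatible and no gap arises. I would therefore write the proof simply as: for $1\le a\le k$, Proposition \ref{expdim-lowerbound} gives the lower bound by the expected dimension, while the semicontinuity proposition together with Proposition \ref{proposition-toric} gives the upper bound by the same quantity; hence equality holds and $\dim\ls_{N,k}(V,a)=\edim \ls_{N,k}(V,a)$. The only subtlety worth flagging explicitly is the behaviour when the virtual dimension drops below $-1$, where one should remark that an empty (or negative-dimensional) linear system on the degenerate side controls the original system from above, so the expected value is still attained.
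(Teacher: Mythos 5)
Your proposal is correct and matches the paper's argument, which is exactly the squeeze you describe: the lower bound from Proposition \ref{expdim-lowerbound} and the upper bound from the semicontinuity of the toric degeneration combined with the explicit count of Proposition \ref{proposition-toric}. The paper states this in one line; your write-up, including the remark on the case of negative virtual dimension, is a faithful expansion of the same proof.
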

\begin{proof}
It follows from Propositions \ref{expdim-lowerbound} and  \ref{proposition-toric}.
\end{proof}

\section{Proof of the main theorem}\label{section-proof}

We are ready to prove our main theorem, Theorem \ref{main-thm1}.
Thanks to Proposition \ref{identifiability tool proposition}, computing the dimension of the $h$-secant varieties of the $(d,k)$-Veronese variety $V^k_{n,d}\subset \p^{N_{dk}}$ is equivalent to computing the dimension of the linear system in $\p^N$ of hypersurfaces containing the  Veronese variety and double at $h$ general points. 

Let $V_{n,d}\subset\pp^N$ be the $d$-th Veronese embedding of $\pp^n$
and let $Z\subset\PP^n$ be a double point scheme with support a set of points in general position. Let $ \mathcal{I}_V$ be the ideal sheaf of $V\subset\pp^N$ and let
$\mathcal{I}_{Z}$ be the ideal sheaf of $Z\subset\PP^N$. Consider
 the sheaf
$$\ls_{N,k}(V,2^h):= \mathcal{O}_{\pp^N}(k)\otimes \mathcal{I}_V\otimes\mathcal{I}_{Z}.$$
%
Since the Hilbert polynomial of $V\subset\pp^N$ in degree $k$ is 
${{n+kd}\choose n}$
 or, in other terms, 
\begin{equation}\label{HilbFnVeronese}
\dim\mathcal{O}_{\pp^N}(k)\otimes\mathcal{I}_V={{N+k}\choose N}-{{n+kd}\choose n}-1,
\end{equation}
and since $h$ double points in general position of $\PP^N$ impose $h(N+1)$ conditions to the hypersurfaces of $\PP^N$ of degree $k$,
 we can give the following definitions.
 \begin{definition}\label{exp dim V and general double points}
The \emph{virtual  dimension} of the linear system $\ls_{N,k,V}(2^h)$ of hypersurfaces of $\PP^n$ that vanish along the Veronese variety $V=V_{n,d}\subset\pp^N$ and double at $h$ points in general position is the following integer: 
$$
\vdim \ls_{N,k}(V,2^h)={{N+k}\choose N}-{{n+kd}\choose n}- h(N+1)-1.
$$
The \emph{expected dimension}  is
 $$
\edim \ls_{N,k}(V,2^h)=\max\left\{-1, \vdim \ls_{N,k}(V,2^h)\right\}.
$$
\end{definition}
Since $V$ and the scheme of double points are disjoint, the virtual dimension provides a lower bound to the actual dimension:
\begin{equation}\label{virtual-lowerbound-Veronese-double}
\dim \ls_{N,k}(V,2^h)\ge \edim \ls_{N,k}(V,2^h).
\end{equation}

Using a degeneration argument, we shall show that if the number of points $h$ is not too large, then the linear system $\ls_{N,k,V}(V,2^h)$ has dimension equal to the expected dimension.
\begin{theorem}\label{main-thm-linearsystems}
Let $\nu_d: \pp^n\to\pp^N$ be the $d$-the Veronese embedding. Let $V=V_{n,d}:=\nu_d(\pp^n)\subset\pp^N$ and let $Z_{V}\subset\PP^n$ be a double point scheme supported on $h$ points in general position of $\PP^N$. Then if $k\ge 3$ and
\begin{equation}\label{main-bound}
h\le \frac{1}{N+1}{{N+k-3}\choose N}
\end{equation}
then
\begin{equation}\label{lower bound to dim V and points}
\dim \ls_{N,k}(V,2^h)= \edim \ls_{N,k}(V,2^h).
\end{equation}
\end{theorem}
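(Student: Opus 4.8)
The plan is to run the $\F\pp$-degeneration of Section~\ref{degeneration-section} on the system $\ls_{N,k}(V,2^h)$, realised as the line bundle $\mathcal{M}_t(k,a;V,\mathbf{m})$ on the general fibre with $\mathbf{m}=(2,\dots,2)$ and twist parameter $a=k-1$. By semicontinuity \eqref{semicontinuity} the dimension of the limit $\mathcal{M}_0$ on the central fibre $X_0=\pp\cup\F$ bounds $\dim\ls_{N,k}(V,2^h)$ from above, while \eqref{virtual-lowerbound-Veronese-double} bounds it from below by $\edim\ls_{N,k}(V,2^h)$; so it suffices to prove $\dim\mathcal{M}_0=\vdim\ls_{N,k}(V,2^h)$.

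By the matching formula \eqref{H0 central fiber}, $\dim\mathcal{M}_0$ is assembled from three contributions. The kernel system on the exceptional component is $\hat{\mathcal{M}}_\pp\cong\ls_{N,k-2}(\Lambda,2^h)$, and the choice $a=k-1$ is exactly what makes the bound of Proposition~\ref{dim-doublepoints-on-Lambda} (applied in degree $k-2$) read as our hypothesis $h\le\frac{1}{N+1}\binom{N+k-3}{N}$, so this piece has the dimension predicted there. The kernel system on $\F$ is $\hat{\mathcal{M}}_\F\cong\ls_{N,k}(V,a+1)=\ls_{N,k}(V,k)$, a fat point of multiplicity $k$ supported on $V$, whose dimension is supplied by Corollary~\ref{corollary-toric} (the needed inequality $a+1\le k$ holds with equality).

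The delicate piece is the intersection $\mathcal{M}_\pp|_R\cap\mathcal{M}_\F|_R$ inside $\H^0(R,\mathcal{O}_R(a))$ with $R\cong\pp^{N-1}$. Here I would argue that both trace maps are as surjective as possible: since $\Lambda\cap R=\tilde V\cap R=\Lambda_R\cong\pp^{n-1}$, both images lie in the complete system $\H^0(R,\mathcal{O}_R(k-1)\otimes\mathcal{I}_{\Lambda_R})$, and a count comparing $\mathcal{M}_\pp$ with $\hat{\mathcal{M}}_\pp$ (Proposition~\ref{dim-doublepoints-on-Lambda} now in degree $k-1$) and $\mathcal{M}_\F$ with $\hat{\mathcal{M}}_\F$ (Corollary~\ref{corollary-toric}) shows that each image already fills that complete system. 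Hence the two traces coincide, their intersection equals $\H^0(R,\mathcal{O}_R(k-1)\otimes\mathcal{I}_{\Lambda_R})$, of vector space dimension $\binom{N+k-2}{N-1}-\binom{n+k-2}{n-1}$. Substituting the three $\h^0$-contributions into the $\h^0$-form of \eqref{H0 central fiber} and collapsing the binomials by repeated use of Pascal's rule, the $n+k-1$, $n+k-2$ and intermediate $N$-terms telescope to leave
\[
\h^0(\mathcal{M}_0)=\binom{N+k}{N}-\binom{n+kd}{n}-h(N+1),
\]
that is $\dim\mathcal{M}_0=\vdim\ls_{N,k}(V,2^h)$, yielding the desired equality with $\edim$.

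The main obstacle I anticipate is precisely this transversality statement on $R$: one must be sure that the trace of each auxiliary system genuinely exhausts the complete system cut out by $\Lambda_R$, which forces one to invoke the expected-dimensionality of Proposition~\ref{dim-doublepoints-on-Lambda} and Corollary~\ref{corollary-toric} twice each, once for the full system and once for its restriction kernel. The remaining cases are boundary ones to be dispatched directly: for $k=3$ the bound \eqref{main-bound} forces $h=0$, and the claim reduces to the projective normality computation \eqref{HilbFnVeronese}; and for the few low-degree triples $(N,k-3,h)$ and $(N,k-2,h)$ one checks that \eqref{main-bound} keeps them off the Alexander--Hirschowitz list of Theorem~\ref{AH-thm}, with the degree-$3$ branch of Proposition~\ref{dim-doublepoints-on-Lambda} covering the case $k=5$.
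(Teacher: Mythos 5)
Your proposal is correct and follows essentially the same route as the paper: the same $\mathbb{FP}$-degeneration with twist $a=k-1$ and multiplicities $(2,\dots,2)$, the same identification of the four auxiliary systems $\LL_{N,k-1}(\Lambda,2^h)$, $\LL_{N,k-2}(\Lambda,2^h)$, $\LL_{N,k}(V,k-1)$, $\LL_{N,k}(V,k)$, and the same inputs (Proposition \ref{dim-doublepoints-on-Lambda} on $\pp$, Corollary \ref{corollary-toric} on $\F$) assembled via the matching formula \eqref{H0 central fiber}, with the same handling of the boundary case $k=3$. The only minor deviation is how the trace of $\ls_\F$ on $R$ is shown to fill the complete system $\ls_{N-1,k-1}(\Lambda_R)$: you obtain it directly by rank--nullity on the component $\F$ using Corollary \ref{corollary-toric} for both $\ls_\F$ and its kernel, whereas the paper sandwiches $\dim\mathcal{R}_\F$ between the lower bound forced by semicontinuity together with the virtual dimension on the general fibre and the obvious upper bound; both yield the same count.
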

\begin{proof}
Using  \eqref{virtual-lowerbound-Veronese-double}, it is enough to prove that the inequality $\dim \ls_{N,k}(V,2^h)\leq \edim \ls_{N,k}(V,2^h)$ holds.

If $k=3$, then $h=0$ so the statement follows from equation \eqref{HilbFnVeronese}.
For $k\ge 4$, we will prove the statement by means of the $\mathbb{FP}-$degeneration introduced in Section \ref{degeneratingVeronese-section},  applied to the line bundle 
\begin{align*}
\mathcal{L}_{\widetilde{\mathcal{X}}}:=&\mathcal{M}_{\widetilde{\mathcal{X}}}(k,k-1;\widetilde{\mathcal{V}},2,\dots,2).\\
\intertext{By Remark \ref{abuse-notation-ls}, the line bundle on the general fiber is isomorphic to }
\ls_t:=&\ls_{N,k}(V,2^h),\\
\intertext{while on the central fiber the linear systems  on the two components are the following:} 
\ls_\pp:=&\LL_{N,k-1}(\Lambda,2^h),\\
\ls_\F:=&\LL_{N,k}(V,k-1).\\
\intertext{We consider the restriction to $R=\pp\cap\F$: the kernels on the two components are, respectively:}
\hat{\ls}_\pp:=&\LL_{N,k-2}(\Lambda,2^h),\\
\hat{\ls}_\F:=&\LL_{N,k}(V,k).\\
\intertext{Since $R\cong\pp^{N-1}$, the two restricted systems satisfy the following:}
\mathcal{R}_\pp:=&\ls_\pp|_R\subset\ls_{N-1,k-1}(\Lambda_R),\\
\mathcal{R}_\F:=&\ls_\F|_R\subset\ls_{N-1,k-1}(\Lambda_R), 
\end{align*}
where we recall that  $\Lambda_R=\Lambda\cap R\cong\pp^{n-1}$.

We first look at the exceptional component $\pp$. By Proposition \ref{dim-doublepoints-on-Lambda}, since $k-2\ge 2$ and
$$
h\le\frac{1}{N+1}{{N+k-3}\choose{N}},
$$
both linear systems $\ls_\PP$ and $\hat{\mathcal{L}}_\pp$ have the expected dimension, that is 
\begin{align}
\dim\ls_\PP&={{N+k-1}\choose N}-{{n+k-1}\choose n}-h(N+1)-1\nonumber\\
\dim\hat{\mathcal{L}}_\pp&={{N+k-2}\choose N}-{{n+k-2}\choose n}-h(N+1)-1.\label{dim-hatLP}
\end{align}
Moreover, we have a short exact sequence of spaces of global sections:
$$
0\to \H^0(\pp,\hat{\mathcal{L}}_\pp)\to \H^0(\pp,{\mathcal{L}}_\pp)\to \H^0(R,\mathcal{R}_\pp)\to0.
$$
In particular, we can compute
\begin{align*}
\dim \mathcal{R}_\pp&=\dim{\mathcal{L}}_\pp-\dim \hat{\mathcal{L}}_\pp+1\\
&={{N+k-2}\choose {N-1}}-{{n+k-2}\choose {n-1}}-1\\
&=\dim \ls_{N-1,k-1}(\Lambda_R).
\end{align*}
We conclude that $\mathcal{R}_\pp$ is the complete linear system $$\mathcal{R}_\pp=\ls_{N-1,k-1}(\Lambda_R).$$

On the component $\F$, using Corollary \ref{corollary-toric},  we have that both $\ls_\F$ and $\hat{\ls}_\F$ have the expected dimension, that is
\begin{align}
\dim\mathcal{L}_\F&={{N+k}\choose N}-{{n+kd}\choose n}-\left({{N+k-2}\choose N}-{{n+k-2}\choose n}\right)-1\nonumber\\
\dim\hat{\mathcal{L}}_\F&={{N+k}\choose N}-{{n+kd}\choose n}-\left({{N+k-1}\choose N}-{{n+k-1}\choose n}\right)-1.\label{dim-hatLF}
\end{align}

We claim that 
$$\mathcal{R}_\F=\ls_{N-1,k-1}(\Lambda_R),$$
so that, together with the above argument, we have
\begin{equation}\label{L_R}
\mathcal{R}_\pp\cap\mathcal{R}_\F=\mathcal{O}_{\pp^{N-1}}(k-1)\otimes\mathcal{I}_{\Lambda_R}.
\end{equation}
In order to prove the claim, we observe that by semicontinuity, and precisely Formula \eqref{semicontinuity}, and by \eqref{virtual-lowerbound-Veronese-double}, we have 
\begin{equation}\label{chain-inequalities}
\dim\ls_0\ge \dim\ls_t\ge\edim\ls_t={{N+k}\choose N}-{{n+kd}\choose n}- h(N+1)-1.
\end{equation}
Using Formula \eqref{H0 central fiber}, i.e., 
\begin{align}\label{matching-theorem}
\dim\ls_0&=\dim\hat{\ls}_{\pp}+\dim\hat{\ls}_{\F}+\mathcal{R}_\pp\cap\mathcal{R}_\F
+2\end{align}
and observing that $\mathcal{R}_\pp\cap\mathcal{R}_\F=\mathcal{R}_\F$, 
we obtain
\begin{align*}
\dim\mathcal{R}_\F&\ge \edim\ls_t-\dim\hat{\ls}_\pp-\dim\hat{\ls}_\F-2={{N+k-2}\choose {N-1}}-{{n+k-2}\choose {n-1}}-1;
\end{align*}
the proof of the latter equality is easy and left to the reader. Since 
\begin{align*}
\dim\mathcal{R}_\F\le \dim\mathcal{O}_{\pp^{N-1}}(k-1)\otimes\mathcal{I}_{\Lambda_R}={{N+k-2}\choose {N-1}}-{{n+k-2}\choose {n-1}}-1,
\end{align*}
the claim follows and we have 
\begin{equation}\label{dim-R}
\dim\mathcal{R}_\PP\cap\mathcal{R}_\F ={{N+k-2}\choose {N-1}}-{{n+k-2}\choose {n-1}}-1,
\end{equation}
Using  \eqref{dim-hatLP},  \eqref{dim-hatLF}, \eqref{matching-theorem} and \eqref{dim-R}, we obtain 
$\dim\ls_0=\edim\ls_t$.  We conclude using \eqref{chain-inequalities}.

%
\end{proof}
Theorem \ref{main-thm1} is now just a corollary of what we just proved.

\begin{corollary}\label{cor-main-them1}
For $k\ge3$, if \eqref{main-bound} holds, then the $(d,k)-$Veronese variety $V_{n,d}^k\subset\p^{N_{dk}}$ is non-defective.
\end{corollary}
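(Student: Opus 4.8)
The plan is to read off the statement as a direct consequence of the two pillars already in place, namely the Terracini-type translation of Proposition~\ref{identifiability tool proposition} and the dimension computation of Theorem~\ref{main-thm-linearsystems}; no new geometry is required.

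First I would unwind the definition of non-defectivity. By Definition~\ref{abstract}, the assertion that $V_{n,d}^k$ is not $h$-defective is precisely the assertion that its $h$-secant variety $\Sec_h(V_{n,d}^k)\subset\p^{N_{dk}}$ attains the expected dimension $\min\{N_{dk},\,h(N_d+1)-1\}$, where we use that $V_{n,d}^k$ is an $N_d$-dimensional variety by Lemma~\ref{immersion}. Proposition~\ref{identifiability tool proposition} then transports this to a statement on $\p^{N_d}$: for general points $p_1,\dots,p_h$, the secant variety $\Sec_h(V_{n,d}^k)$ has the expected dimension if and only if the linear system
$$
\ls_{N_d}(V_{n,d},2^h)=\mathcal{O}_{\p^{N_d}}(k)\otimes\mathcal{I}_{V_{n,d}}\otimes\mathcal{I}_{p_1^2,\dots,p_h^2}
$$
does. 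This is exactly the system denoted $\ls_{N,k}(V,2^h)$ in Theorem~\ref{main-thm-linearsystems}, under the convention $N=N_d$.

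It then remains only to invoke Theorem~\ref{main-thm-linearsystems}: the standing hypotheses $k\ge 3$ together with the bound \eqref{main-bound}, that is $h\le\frac{1}{N+1}{{N+k-3}\choose N}$, are exactly the conditions under which that theorem yields $\dim\ls_{N,k}(V,2^h)=\edim\ls_{N,k}(V,2^h)$. Feeding this equality back through the equivalence of Proposition~\ref{identifiability tool proposition} shows that $\Sec_h(V_{n,d}^k)$ reaches its expected dimension, i.e.\ that $V_{n,d}^k$ is not $h$-defective, for every $h$ satisfying \eqref{main-bound}.

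I do not expect any genuine obstacle, since the substantive content lives entirely in Theorem~\ref{main-thm-linearsystems}. The only point deserving a line of care is the bookkeeping that the expected dimension of $\ls_{N,k}(V,2^h)$ in the sense of Definition~\ref{exp dim V and general double points}---obtained by subtracting the $h(N+1)$ conditions of the double points from the Hilbert count of degree-$k$ hypersurfaces through $V$---coincides with the expected dimension of the secant variety dictated by Terracini's Lemma (Theorem~\ref{th:Terracini}). As Proposition~\ref{identifiability tool proposition} is itself the equivalence of these two notions of expected dimension, this matching is automatic and the corollary follows formally.
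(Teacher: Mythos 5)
Your proposal is correct and follows exactly the paper's own argument: the corollary is deduced by combining Proposition~\ref{identifiability tool proposition} with Theorem~\ref{main-thm-linearsystems}, precisely as you describe. The extra bookkeeping remark about the two notions of expected dimension is a fair point of care but, as you note, is already absorbed into Proposition~\ref{identifiability tool proposition}.
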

\begin{proof} It follows from Theorem \ref{main-thm-linearsystems} and Proposition \ref{identifiability tool proposition}.
\end{proof}

  Theorem \ref{main-thm2} is an easy consequence of Theorem \ref{main-thm1} and Theorem \ref{Mass-Mella}.
\begin{corollary}
For $k\ge3$, if $$h \leq \min\left\{\frac{1}{N+1} {N+k-3 \choose N}-1, \left\lfloor \frac{{{n+kd} \choose {n}}}{N+1} \right\rfloor-1\right\}$$ then the $(d,k)-$Veronese variety $V_{n,d}^k\subset\p^{N_{dk}}$ is $h$-identifiable.
\end{corollary}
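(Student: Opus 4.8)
The plan is to deduce this statement by applying the Massarenti--Mella identifiability criterion, Theorem \ref{Mass-Mella}, to the variety $X=V_{n,d}^k\subset\pp^{N_{dk}}$, using the non-defectivity already obtained in Theorem \ref{main-thm1}. First I would record the elementary geometry of $X$ needed to feed into that criterion. By Lemma \ref{immersion} the map $\phi_{dk}$ is an embedding, so $X\cong\pp^{N_d}$ is smooth and irreducible, of dimension $\dim X=N_d=N$, embedded in $\pp^{N_{dk}}$. Moreover $X$ is non-degenerate: it contains the ordinary Veronese $\nu_{dk}(\pp^n)$, because $L^{dk}=(L^d)^k$, and the linear span of $\nu_{dk}(\pp^n)$ is already all of $\pp^{N_{dk}}=\pp(\Sym^{dk}(W^\ast))$.

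Next I would check the three hypotheses of Theorem \ref{Mass-Mella}, keeping careful track of the fact that in that statement the ambient projective space has dimension equal to our $N_{dk}$, while the dimension of the variety is our $N=N_d$. The Gauss map of $X$ is non-degenerate because $X$ is smooth, and the Gauss map of a smooth complex projective variety is generically finite onto its image. For the non-defectivity hypothesis, I would apply Theorem \ref{main-thm1} (equivalently Corollary \ref{cor-main-them1}) with $h+1$ in place of $h$: the variety $X$ is not $(h+1)$-defective as soon as $h+1\le\tfrac{1}{N+1}\binom{N+k-3}{N}$, which is exactly the first entry of the minimum. Finally, the numerical condition $(h+1)\dim X+h\le N_{dk}$ becomes $(h+1)N+h\le N_{dk}$; rewriting the left-hand side as $(h+1)(N+1)-1$ and using $N_{dk}+1=\binom{n+kd}{n}$, this is equivalent to
\begin{equation*}
(h+1)(N+1)\le\binom{n+kd}{n},\qquad\text{i.e.}\qquad h+1\le\frac{\binom{n+kd}{n}}{N+1}.
\end{equation*}
Since $h+1$ is a positive integer, this holds precisely when $h\le\left\lfloor\binom{n+kd}{n}/(N+1)\right\rfloor-1$, the second entry of the minimum.

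Therefore, whenever $h$ satisfies the stated bound both numerical conditions hold at once, and all hypotheses of Theorem \ref{Mass-Mella} are met, so $X=V_{n,d}^k$ is $h$-identifiable; the hypothesis $k\ge3$ is inherited from Theorem \ref{main-thm1}. I expect that there is no genuine difficulty here beyond bookkeeping: the only points requiring care are matching the $(n,N)$ of the abstract criterion to the $(N_d,N_{dk})$ of the present setting and invoking generic finiteness of the Gauss map for the smooth $X$, after which the two entries of the minimum are precisely the two hypotheses of Theorem \ref{Mass-Mella} translated into our notation.
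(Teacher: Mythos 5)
Your proposal is correct and follows exactly the paper's route: the paper's proof of this corollary is a one-line citation of Theorem \ref{main-thm1} (via Corollary \ref{cor-main-them1}) together with Theorem \ref{Mass-Mella}, and your two numerical conditions are precisely the translations of the non-defectivity bound and of $(h+1)\dim X+h\le N_{dk}$ into the paper's notation. The only thing you add beyond the paper is the explicit verification of the Gauss-map hypothesis via smoothness of $X\cong\pp^{N_d}$ (Lemma \ref{immersion}) and Zak-type finiteness, which the paper leaves implicit.
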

\begin{proof}
It follows from Corollary \ref{cor-main-them1} and Theorem \ref{Mass-Mella}.
\end{proof}

\begin{remark}\label{generic}
Note that the bound given in Theorem \ref{main-thm1} can be strictly larger then the expected generic rank for the $(d,k)-$Veronese $V_{n,d}^k$, see for example the computations in Section \ref{section-bound} below. In this case Theorem \ref{main-thm1} proves the non defectivity of $V_{n,d}^k$ up to the generic rank. The identifiability property instead is more subtle and in order to be able to use Theorem \ref{Mass-Mella} we must ensure that the embedded secant variety  $\Sec_h(V_{n,d}^k)$ has the same dimension as the abstract secant variety $sec_h(V_{n,d}^k)$, see definition \ref{abstract}. Equivalently we require that the $(h+1)-$secant map is generically finite. Because of this assumption we are not able to prove generic identifiability and we must restrict ourselves to the range given in Theorem \ref{main-thm2}.

\end{remark}

\section{Asymptotical Bound}\label{section-bound}
In this section we relate our bound in Theorem \ref{main-thm1} with the one given in \cite{Nen}.
For the sake of completeness, we recall Nenashev's result first.
\begin{theorem}{\cite[Theorem 1]{Nen}}\label{thm:FroExp}
	Let $I$ be a homogeneous ideal generated by $h\in \mathbb{N}\setminus \{0\}$  
	generic elements of some nonempty variety $ \mathcal{D} \subseteq \Sym^r(\C^n) $ of $ r $-forms that is closed under linear transformations. Fix an integer $s\geq 0$. If $$h\leq \left({{r+s+n-1}\choose {n-1}}/{{s+n-1}\choose {n-1}}\right) -{{s+n-1}\choose {n-1}},$$  
	then the dimension of $I$ in degree $(r+s)$ is maximal, i.e. it equals $h {{s+n-1}\choose {n-1}}$.  
\end{theorem}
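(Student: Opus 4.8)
The plan is to reinterpret the ``maximal dimension'' assertion as the injectivity of a multiplication map and then to establish that injectivity by degenerating to $GL_n$-translates of a single form. Write $S_j=\Sym^j(\C^n)$ for the space of degree-$j$ forms, and set $m=\binom{s+n-1}{n-1}=\dim S_s$ and $M=\binom{r+s+n-1}{n-1}=\dim S_{r+s}$. For $f_1,\dots,f_h\in\mathcal D$ the degree-$(r+s)$ piece $I_{r+s}$ is the image of the multiplication map
\begin{equation*}
\mu\colon S_s^{\oplus h}\longrightarrow S_{r+s},\qquad (g_1,\dots,g_h)\longmapsto \sum_{i=1}^h g_i f_i,
\end{equation*}
whose source has dimension $hm$. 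Thus $\dim I_{r+s}=hm$ is maximal precisely when $\mu$ is injective; since each mult-by-$f_i$ is injective in the domain $S$, this is in turn equivalent to the subspaces $f_iS_s$ being in direct sum. Note that the hypothesis $h\le M/m-m$ gives $hm\le M-m^2<M$, so injectivity is not excluded on dimensional grounds. First I would record this reformulation and reduce the theorem to proving that $\mu$ is injective for a general $(f_1,\dots,f_h)\in\mathcal D^h$.

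Next I would exploit the two structural hypotheses on $\mathcal D$: nonemptiness and $GL_n$-invariance. The locus in $\mathcal D^h$ on which $\mu$ is injective is open, so by semicontinuity it suffices to exhibit a single configuration in $\mathcal D^h$ for which $\mu$ is injective. Fixing a general nonzero $f\in\mathcal D$ and setting $f_i=f\circ A_i$ for $A_i\in GL_n$ produces configurations lying in $\mathcal D$, because $\mathcal D$ is closed under linear substitution. Using the identity $(f\circ A)\cdot(p\circ A)=(fp)\circ A$ together with $S_s\circ A=S_s$, the image subspace $f_iS_s$ equals the translate $(fS_s)\circ A_i$. Hence injectivity of $\mu$ becomes the statement that the $h$ translates
\begin{equation*}
U\circ A_1,\ \dots,\ U\circ A_h,\qquad U:=fS_s\subset S_{r+s},\ \dim U=m,
\end{equation*}
are in direct sum in $S_{r+s}$ for general $A_1,\dots,A_h\in GL_n$.

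To prove direct-sum position I would argue inductively, adding one translate at a time: assuming the first $j$ translates span a direct sum $W=\bigoplus_{i\le j}U\circ A_i$ of dimension $jm$, I must show that a general translate $U\circ A$ meets $W$ only in $0$. The natural device is the incidence variety
\begin{equation*}
Z=\{(A,[v])\in GL_n\times\PP(W)\colon v\in (fS_s)\circ A\},
\end{equation*}
and the goal is to bound $\dim Z<\dim GL_n=n^2$, which forces the projection $Z\to GL_n$ to miss a general $A$. Fibering $Z$ over $\PP(W)$, the fibre over $[v]$ is $\{A\colon v\circ A^{-1}\in fS_s\}$, whose dimension is governed by how the $GL_n$-orbit of $v$ meets the fixed subspace $fS_s$ (of codimension $M-m$). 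The main obstacle is exactly this fibre estimate: because every $v\in W$ is assembled from the same form $f$, the orbit of $v$ does \emph{not} meet $fS_s$ transversally, and the resulting excess intersection is what degrades the naive bound $h\le M/m$ to $h\le M/m-m$. Quantifying this excess uniformly in $f$ --- equivalently, controlling the stabiliser contributions coming from the common origin $f$ of all the pieces --- is the crux of the argument, after which the inductive chain of inequalities $(j+1)m\le M-m^2$ yields precisely the stated bound.
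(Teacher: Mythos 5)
First, a remark on scope: the paper does not prove this statement at all --- it is quoted verbatim from \cite[Theorem 1]{Nen} in Section \ref{section-bound} purely for the purpose of comparing asymptotic bounds, so there is no internal proof to measure your argument against; what follows is an assessment of your proposal on its own terms.

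Your reductions are sound as far as they go: maximality of $\dim I_{r+s}$ is indeed equivalent to injectivity of $\mu$, hence (since $S$ is a domain) to the subspaces $f_iS_s$ being in direct sum; and by openness of the injectivity locus it suffices to exhibit one good configuration, for which taking $GL_n$-translates $f_i=f\circ A_i$ of a single generic $f$ is legitimate (modulo the unaddressed point that this specialization argument needs $\mathcal D^h$ irreducible, i.e.\ $\mathcal D$ irreducible, otherwise the generic tuple need not degenerate to translates of one form). But the proof stops exactly where the theorem begins. The entire content of the statement is the numerical threshold $hm\le M-m^2$, and that threshold is supposed to emerge from the inductive step ``a general translate $U\circ A$ meets $W=\bigoplus_{i\le j}U\circ A_i$ only in $0$ provided $\dim W\le M-m^2-m$''; you explicitly defer this (``quantifying this excess \dots is the crux''), so nothing forcing the bound $\binom{r+s+n-1}{n-1}/\binom{s+n-1}{n-1}-\binom{s+n-1}{n-1}$ is ever established.

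Moreover, the incidence-variety count as you set it up cannot close without substantial additional input. Fibering $Z$ over $\PP(W)$ gives only $\dim Z\le(jm-1)+F$ with $F$ the maximal fibre dimension, and you need $\dim Z<n^2$. In the relevant range $jm-1$ typically far exceeds $n^2=\dim GL_n$ (e.g.\ already $m^2\le M-hm$ allows $jm$ of order $M$), while the fibres of $Z\to\PP(W)$ are certainly nonempty over each $(m-1)$-dimensional locus $\PP(U\circ A_i)\subset\PP(W)$, since $v=(fg)\circ A_i$ satisfies $v\circ A_i^{-1}\in fS_s$. So the bound $\dim Z\le\dim\PP(W)+\max F<n^2$ is arithmetically unattainable: one must first excise the ``trivial'' components of $Z$ supported over $\bigcup_i\PP(U\circ A_i)$ (and over more degenerate loci produced by stabilizers of $fS_s$) and then prove that what remains does not dominate $GL_n$. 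Identifying and controlling exactly that locus is where the codimension loss $m^2$ in the hypothesis comes from, and it is the part of Nenashev's argument that your outline replaces with a description of the difficulty rather than a proof. As it stands the proposal is a plausible strategy sketch, not a proof.
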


Note that when $r=d(k-1)$ and $s=r$ the component of $I$ of degree $r+s=dk$ gives exactly the dimension of the $h-$ secant variety $\Sec_h(V_{n,d}^k)$, where $\mathcal{D}$ is the tangential variety of $V_{n,d}^k$, i.e. $$\mathcal{D}=\{F^{k-1}G|F,G \in \C[x_0,\dots,x_n]_d\}.$$
We have the following consequence.

\begin{corollary}\label{cor:FroExp}
The dimension of $\Sec_h(V_{n,d}^k)$ is the expected one, i.e. $$\dim \Sec_h(V_{n,d}^k)=h  {{n+d}\choose {d}}-1$$ for $h \leq \frac{{{n+dk}\choose {dk}}}{{{n+d}\choose {d}}}-{{n+d}\choose {d}}$.
\end{corollary}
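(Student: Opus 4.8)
The plan is to obtain this as a direct application of Nenashev's Theorem \ref{thm:FroExp}, once the parameters are chosen so that the relevant graded piece of the generated ideal is exactly the affine span of the tangent spaces that, by Terracini's Lemma, compute $\Sec_h(V_{n,d}^k)$. The only bookkeeping subtlety is a shift in the number of variables: Nenashev works with forms in $n$ variables, i.e. with $\Sym^r(\C^n)$, whereas here we deal with forms in the $n+1$ variables $x_0,\dots,x_n$, so every instance of Nenashev's $n$ must be replaced by $n+1$.

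First I would take as generating variety $\mathcal{D}=\{F^{k-1}\mid F\in\C[x_0,\dots,x_n]_d\}$, the set of $(k-1)$-th powers of degree-$d$ forms. This is a nonempty subvariety of the space of forms of degree $r:=d(k-1)$, and it is closed under linear changes of coordinates, since $\phi(F)^{k-1}=\phi(F^{k-1})$ for every $\phi\in GL_{n+1}$; hence the hypotheses of Theorem \ref{thm:FroExp} are satisfied. Choosing $s:=d$, so that $r+s=dk$, the degree-$dk$ component of the ideal $I=(F_1^{k-1},\dots,F_h^{k-1})$ generated by $h$ generic such powers is
$$I_{dk}=\sum_{i=1}^h F_i^{k-1}\cdot\C[x_0,\dots,x_n]_d .$$
By Terracini's Lemma (Theorem \ref{th:Terracini}) the affine tangent space to $V_{n,d}^k$ at the general point $[F_i^k]$ is precisely $F_i^{k-1}\cdot\C[x_0,\dots,x_n]_d$, so $I_{dk}$ is the affine cone over $\langle T_{[F_1^k]}V_{n,d}^k,\dots,T_{[F_h^k]}V_{n,d}^k\rangle$; its projectivization is $\Sec_h(V_{n,d}^k)$, and therefore $\dim\Sec_h(V_{n,d}^k)=\dim_\C I_{dk}-1$.

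Finally I would substitute $r=d(k-1)$, $s=d$ and $n\mapsto n+1$ into Theorem \ref{thm:FroExp}. The hypothesis becomes
$$h\le\frac{\binom{dk+n}{n}}{\binom{d+n}{n}}-\binom{d+n}{n}=\frac{\binom{n+dk}{dk}}{\binom{n+d}{d}}-\binom{n+d}{d},$$
which is exactly the range in the statement, and the conclusion reads $\dim_\C I_{dk}=h\binom{d+n}{n}=h\binom{n+d}{d}$; subtracting $1$ for the projectivization yields $\dim\Sec_h(V_{n,d}^k)=h\binom{n+d}{d}-1$, the expected value. I expect the only genuine points requiring attention to be the variable shift and the verification that $s=d$ (not $s=r$) is the choice producing degree $dk$, together with the fact that no forced Koszul relations in degree $r+s$ spoil the count. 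The latter is automatic: for $k\ge 3$ one has $s=d<d(k-1)=r$, so the lowest syzygies (in degree $2r$) do not reach degree $r+s=dk$; and for $k=2$, where $s=r=d$, the stated bound reads $h\le\binom{n+2d}{n}/\binom{n+d}{n}-\binom{n+d}{n}\le 0$ by surjectivity of the multiplication map $\C[x_0,\dots,x_n]_d\otimes\C[x_0,\dots,x_n]_d\to\C[x_0,\dots,x_n]_{2d}$, so the statement is vacuous and the argument goes through uniformly.
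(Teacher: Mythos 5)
Your argument is correct and follows essentially the same route as the paper: Terracini's Lemma identifies the affine span of the tangent spaces $F_i^{k-1}\cdot\C[x_0,\dots,x_n]_d$ with the degree-$dk$ graded piece of the ideal $(F_1^{k-1},\dots,F_h^{k-1})$, and Nenashev's Theorem \ref{thm:FroExp} (with the variable count shifted from $n$ to $n+1$) then gives the stated bound and dimension. In fact your parameter choice, $\mathcal{D}=\{F^{k-1}\}$, $r=d(k-1)$, $s=d$, is the one that makes the numerology work; the paper's preceding remark, which sets $s=r$ and describes $\mathcal{D}$ as the tangential variety, is a slip, since $r+s=dk$ forces $s=d$, and your substitution reproduces exactly the bound $h\le\binom{n+dk}{dk}/\binom{n+d}{d}-\binom{n+d}{d}$ appearing in the corollary.
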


Note that if we fix $k,n$ and let $d \gg 0$ we have that $$\frac{{{n+dk}\choose {dk}}}{{{n+d}\choose {d}}}-{{n+d}\choose {d}} \sim k^n-d^n$$ and if $d \gg k$ the bound is trivial.
In Theorem \ref{main-thm1}  and under the same assumptions we get $$\frac{1}{N+1}{{N+k-3}\choose {N}} \sim d^{n(k-4)}$$ which gives non trivial bounds for $d \gg k$ when $k >4$.

\begin{center}
\includegraphics[scale=0.4]{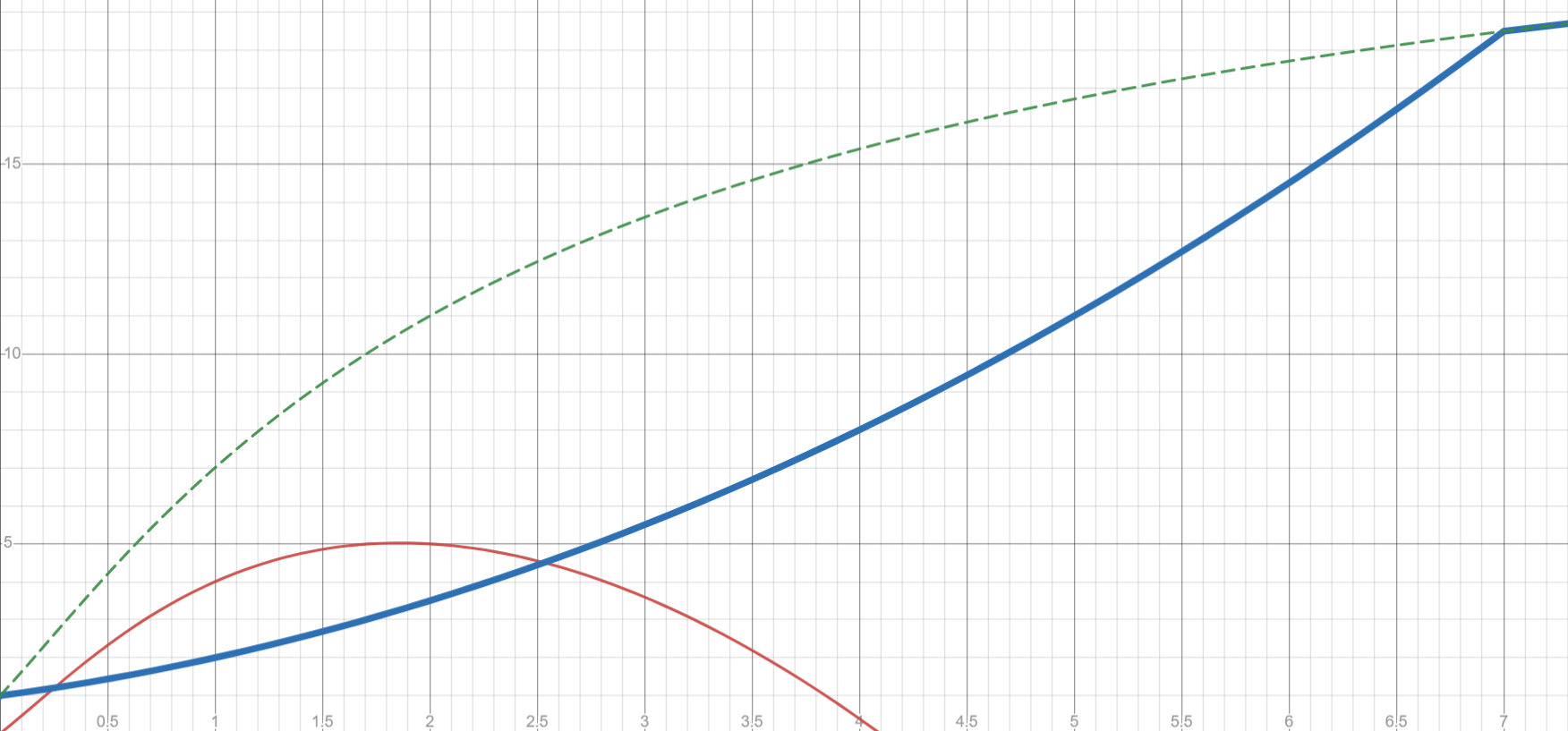}
\end{center}

The figure shows in \textcolor{red}{red} the bound given by Nenashev, in \textcolor{blue}{blue} the bound of Theorem \ref{main-thm2} as a function of $d$ and in \textcolor{green}{green} the expected generic rank ${{n+kd} \choose {kd}}/{{n+d} \choose {d}}$ of $V_{n,d}^k$. In this case we have set the values $k=5$ and $n=2$. For $d>3$ our bound overtakes Nenashev's and it continues to give informations also in the range $d>4$.  

We also notice that for $d$ big enough our bound in Theorem \ref{main-thm1} exceeds the expected generic rank ${{n+kd} \choose {kd}}/{{n+d} \choose {d}}$ of $V_{n,d}^k$, as predicted in Remark \ref{generic}. This shows that under the hypothesis of a very high degree reembedding of $\p^n$ the variety $V_{n,d}^k$ is never defective for every $k \geq 3$. 
\begin{bibdiv}
\begin{biblist}

\bib{A}{article}{
    AUTHOR = {Alexander, J.},
     TITLE = {Singularixt\'{e}s imposables en position g\'{e}n\'{e}rale \`a une
              hypersurface projective},
   JOURNAL = {Compositio Math.},
  FJOURNAL = {Compositio Mathematica},
    VOLUME = {68},
      YEAR = {1988},
    NUMBER = {3},
     PAGES = {305--354},
      ISSN = {0010-437X},
   MRCLASS = {14M07 (14F05)},
  MRNUMBER = {971330},
MRREVIEWER = {L. B\u{a}descu},
       URL = {http://www.numdam.org/item?id=CM_1988__68_3_305_0},
}

\bib{AH2}{article}{
    AUTHOR = {Alexander, J.},
    AUTHOR = {Hirschowitz, A.},
     TITLE = {La m\'{e}thode d'{H}orace \'{e}clat\'{e}e: application \`a l'interpolation
              en degr\'{e} quatre},
   JOURNAL = {Invent. Math.},
  FJOURNAL = {Inventiones Mathematicae},
    VOLUME = {107},
      YEAR = {1992},
    NUMBER = {3},
     PAGES = {585--602},
      ISSN = {0020-9910},
   MRCLASS = {13B25 (14J17)},
  MRNUMBER = {1150603},
MRREVIEWER = {Giorgio Bolondi},
       DOI = {10.1007/BF01231903},
       URL = {https://doi-org.ezp.biblio.unitn.it/10.1007/BF01231903},
}

\bib{AH1}{article}{
     AUTHOR = {Alexander, J.},
    AUTHOR = {Hirschowitz, A.},
     TITLE = {Un lemme d'{H}orace diff\'{e}rentiel: application aux singularit\'{e}s
              hyperquartiques de {${\bf P}^5$}},
   JOURNAL = {J. Algebraic Geom.},
  FJOURNAL = {Journal of Algebraic Geometry},
    VOLUME = {1},
      YEAR = {1992},
    NUMBER = {3},
     PAGES = {411--426},
      ISSN = {1056-3911},
   MRCLASS = {14B05 (14B10 14F17)},
  MRNUMBER = {1158623},
MRREVIEWER = {Kurt Behnke},
}

\bib{AH3}{article}{
     AUTHOR = {Alexander, J.},
    AUTHOR = {Hirschowitz, A.},
     TITLE = {Polynomial interpolation in several variables},
   JOURNAL = {J. Algebraic Geom.},
  FJOURNAL = {Journal of Algebraic Geometry},
    VOLUME = {4},
      YEAR = {1995},
    NUMBER = {2},
     PAGES = {201--222},
      ISSN = {1056-3911},
   MRCLASS = {14N10 (14F17 14Q15)},
  MRNUMBER = {1311347},
MRREVIEWER = {Fyodor L. Zak},
}

\bib{AH4}{article}{
    AUTHOR = {Alexander, J.},
    AUTHOR = {Hirschowitz, A.},
     TITLE = {Generic hypersurface singularities},
   JOURNAL = {Proc. Indian Acad. Sci. Math. Sci.},
  FJOURNAL = {Indian Academy of Sciences. Proceedings. Mathematical Sciences},
    VOLUME = {107},
      YEAR = {1997},
    NUMBER = {2},
     PAGES = {139--154},
      ISSN = {0253-4142},
}

\bib{BCMO}{article}{
AUTHOR = {Blomenhofer, Alexander Taveira},
AUTHOR={Casarotti, Alex},
AUTHOR={Michałek, Mateusz},
AUTHOR={Oneto, Alessandro},
title = {Identifiability for mixtures of centered Gaussians and sums of powers of quadratics},
JOURNAL = {Bulletin of the London Mathematical Society},
VOLUME = {55},
NUMBER = {5},
PAGES = {2407-2424},
DOI = {https://doi.org/10.1112/blms.12871},
YEAR = {2023}
}

\bib{BO}{article}{
    AUTHOR = {Brambilla, M. C.},
    AUTHOR = {Ottaviani, G.},
     TITLE = {On the {A}lexander-{H}irschowitz theorem},
   JOURNAL = {J. Pure Appl. Algebra},
  FJOURNAL = {Journal of Pure and Applied Algebra},
    VOLUME = {212},
      YEAR = {2008},
    NUMBER = {5},
     PAGES = {1229--1251},
      ISSN = {0022-4049},
   MRCLASS = {14N05 (14C20)},
  MRNUMBER = {2387598},
MRREVIEWER = {Ciro Ciliberto},
       DOI = {10.1016/j.jpaa.2007.09.014},
       URL = {https://doi-org.ezp.biblio.unitn.it/10.1016/j.jpaa.2007.09.014},
}

\bib{CM}{article}{
  AUTHOR = {Casarotti, A.},
  AUTHOR = {Mella, M.},
     TITLE = {From non-defectivity to identifiability},
   JOURNAL = {J. Eur. Math. Soc.},
  FJOURNAL = {Journal of the European Mathematical Society},
      YEAR = {2023},
   NUMBER={25}
   PAGES={913-931}
       DOI = {https://doi.org/10.4171/jems/1198},
       
}

\bib{Chandler}{article}{
    AUTHOR = {Chandler, K. A.},
     TITLE = {The geometric interpretation of {F}r\"{o}berg-{I}arrobino
              conjectures on infinitesimal neighbourhoods of points in
              projective space},
   JOURNAL = {J. Algebra},
  FJOURNAL = {Journal of Algebra},
    VOLUME = {286},
      YEAR = {2005},
    NUMBER = {2},
     PAGES = {421--455},
      ISSN = {0021-8693},
   MRCLASS = {14B10 (14N05)},
  MRNUMBER = {2128025},
MRREVIEWER = {Gerhard Pfister},
       DOI = {10.1016/j.jalgebra.2005.01.010},
       URL = {https://doi-org.ezp.biblio.unitn.it/10.1016/j.jalgebra.2005.01.010},
}

\bib{COV}{article}{
Author={Chiantini, L.},
AUTHOR = {Ottaviani, G.}, 
AUTHOR = {Vannieuwenhoven, N.},
  title={On generic identifiability of symmetric tensors of subgeneric rank},
  author={Chiantini, Luca and Ottaviani, Giorgio and Vannieuwenhoven, Nick},
  journal={Transactions of the American Mathematical Society},
  volume={369},
  number={6},
  pages={4021--4042},
  year={2017}
}

\bib{CDM}{article}{
    AUTHOR = {Ciliberto, C.}
    AUTHOR = {Dumitrescu, O.}
    AUTHOR = {Miranda, R.},
     TITLE = {Degenerations of the {V}eronese and applications},
   JOURNAL = {Bull. Belg. Math. Soc. Simon Stevin},
  FJOURNAL = {Bulletin of the Belgian Mathematical Society. Simon Stevin},
    VOLUME = {16},
      YEAR = {2009},
    NUMBER = {5, Linear systems and subschemes},
     PAGES = {771--798},
      ISSN = {1370-1444},
   MRCLASS = {14C20 (14D06 14H50 14N05)},
  MRNUMBER = {2574360},
MRREVIEWER = {Dmitry Kerner},
       URL = {http://projecteuclid.org.ezp.biblio.unitn.it/euclid.bbms/1260369398},
}

\bib{FOS}{article}{
    AUTHOR = {Fr\"oberg, R.},
    AUTHOR = {Ottaviani, G.}, 
    AUTHOR = {Shapiro, B.},
     TITLE = {On the Waring problem for polynomial rings
              },
   JOURNAL = {Proceedings of the National Academy of Sciences},
  FJOURNAL = {},
    VOLUME = {109},
      YEAR = {2012},
    NUMBER = {15},
     PAGES = {5600--5602},
      ISSN = {},
   MRCLASS = {},
  MRNUMBER = {},
MRREVIEWER = {},
       DOI = {10.1073/pnas.1120984109},
       URL = {https://www.pnas.org/doi/abs/10.1073/pnas.1120984109},
}

\bib{fulton-toric-book}{book}{
    AUTHOR = {Fulton, W.},
     TITLE = {Introduction to toric varieties},
    SERIES = {Annals of Mathematics Studies},
    VOLUME = {131},
      NOTE = {The William H. Roever Lectures in Geometry},
 PUBLISHER = {Princeton University Press, Princeton, NJ},
      YEAR = {1993},
     PAGES = {xii+157},
      ISBN = {0-691-00049-2},
   MRCLASS = {14M25 (14-02 14J30)},
  MRNUMBER = {1234037},
MRREVIEWER = {T. Oda},
       DOI = {10.1515/9781400882526},
       URL = {https://doi-org.ezp.biblio.unitn.it/10.1515/9781400882526},
}

\bib{GM}{article}{
Author={Galuppi, F.},
AUTHOR = {Mella, M.}
  title={Identifiability of homogeneous polynomials and Cremona transformations},
  journal={Journal f{\"u}r die reine und angewandte Mathematik (Crelles Journal)},
  volume={2019},
  number={757},
  pages={279--308},
  year={2019},
  publisher={De Gruyter}
}

\bib{GKZ}{book}{
    AUTHOR = {Gel'fand, I. M.},
    AUTHOR = {Kapranov, M. M.}, 
    AUTHOR = {Zelevinsky, A. V.},
     TITLE = {Discriminants, resultants, and multidimensional determinants},
    SERIES = {Mathematics: Theory \& Applications},
 PUBLISHER = {Birkh\"{a}user Boston, Inc., Boston, MA},
      YEAR = {1994},
     PAGES = {x+523},
      ISBN = {0-8176-3660-9},
   MRCLASS = {14N05 (13D25 14M25 15A69 33C70 52B20)},
  MRNUMBER = {1264417},
MRREVIEWER = {I. Dolgachev},
       DOI = {10.1007/978-0-8176-4771-1},
       URL = {https://doi.org/10.1007/978-0-8176-4771-1},
}

\bib{Ger}{article}{
    AUTHOR = {Geramita, A. V.},
     TITLE = {Inverse systems of fat points: {W}aring's problem, secant
              varieties of {V}eronese varieties and parameter spaces for
              {G}orenstein ideals},
 BOOKTITLE = {The {C}urves {S}eminar at {Q}ueen's, {V}ol. {X} ({K}ingston,
              {ON}, 1995)},
    SERIES = {Queen's Papers in Pure and Appl. Math.},
    VOLUME = {102},
     PAGES = {2--114},
 PUBLISHER = {Queen's Univ., Kingston, ON},
      YEAR = {1996},
   MRCLASS = {13D40 (13C13 14M05)},
  MRNUMBER = {1381732},
MRREVIEWER = {Martin Kreuzer},
}

\bib{Lan}{book}{
  title={Tensors: geometry and applications},
  author={Landsberg, Joseph M},
  journal={Representation theory},
  volume={381},
  number={402},
  pages={3},
  year={2012}
}

\bib{MM}{article}{
title={Bronowski's conjecture and the identifiability of projective varieties}
author={Massarenti, A.},
AUTHOR = {Mella, M.}
year={2022}
Journal={https://arxiv.org/abs/2210.13524}
url={https://arxiv.org/abs/2210.13524}
}

\bib{Nen}{article}{
  title={A note on Fr{\"o}berg's conjecture for forms of equal degrees},
  author={Nenashev, G.},
  journal={Comptes Rendus Mathematique},
  volume={355},
  number={3},
  pages={272--276},
  year={2017},
  publisher={Elsevier}
}

\bib{PosPhD}{thesis}{
  author  = {Postinghel, E.},
  title   = {Degenerations and applications: polynomial interpolation and secant degree},
  school  = {Universit\`a Roma Tre, Italy},
  year    = {2010},
}

\bib{Po}{article}{
    AUTHOR = {Postinghel, E.},
     TITLE = {A new proof of the {A}lexander-{H}irschowitz interpolation
              theorem},
   JOURNAL = {Ann. Mat. Pura Appl. (4)},
  FJOURNAL = {Annali di Matematica Pura ed Applicata. Series IV},
    VOLUME = {191},
      YEAR = {2012},
    NUMBER = {1},
     PAGES = {77--94},
      ISSN = {0373-3114},
   MRCLASS = {14N05 (14C20 14D06 32E30)},
  MRNUMBER = {2886162},
MRREVIEWER = {Fyodor L. Zak},
       DOI = {10.1007/s10231-010-0175-9},
       URL = {https://doi-org.ezp.biblio.unitn.it/10.1007/s10231-010-0175-9},
}

\bib{Pos-SecDeg}{article}{
    AUTHOR = {Postinghel, E.},
     TITLE = {Secant degree of toric surfaces and delightful planar toric
              degenerations},
   JOURNAL = {Adv. Geom.},
  FJOURNAL = {Advances in Geometry},
    VOLUME = {13},
      YEAR = {2013},
    NUMBER = {2},
     PAGES = {211--228},
      ISSN = {1615-715X},
   MRCLASS = {14M25 (14D06 52B20)},
  MRNUMBER = {3038702},
MRREVIEWER = {T. Oda},
       DOI = {10.1515/advgeom-2012-0023},
       URL = {https://doi.org/10.1515/advgeom-2012-0023},
}

\end{biblist}
\end{bibdiv}

\end{document}